\newcommand{\Span}{\operatorname{span}}
\newcommand{\Ker}{\operatorname{Ker}}
\newcommand{\sign}{\operatorname{sign}}
\newcommand{\rad}{_{\operatorname{radial}}}
\newcommand{\sS}{\mathscr{S}}
\newcommand{\sL}{\mathscr{L}}
\newcommand{\sV}{\mathscr{V}}
\newcommand{\sK}{\mathscr{K}}
\newcommand{\cH}{\mathcal{H}}
\newcommand{\cI}{\mathcal{I}}
\newcommand{\cS}{\mathcal{S}}
\newcommand{\cW}{\mathcal{W}}
\newcommand{\cB}{\mathcal{B}}
\newcommand{\cL}{\mathcal{L}}
\newcommand{\cX}{\mathcal{X}}
\newcommand{\R}{\mathbb{R}}
\newcommand{\C}{\mathbb{C}}
\newcommand{\Z}{\mathbb{Z}}
\newcommand{\na}{\nabla}
\newcommand{\ti}{\tilde}
\newcommand{\ck}{\check}
\newcommand{\I}{\infty}
\newcommand{\p}{\partial}
\newcommand{\al}{\alpha}
\newcommand{\be}{\beta}
\newcommand{\ga}{\gamma}
\newcommand{\de}{\delta}
\newcommand{\ep}{\epsilon}
\newcommand{\si}{\sigma}
\newcommand{\te}{\theta}
\newcommand{\ta}{\tau}
\newcommand{\la}{\lambda}
\newcommand{\ka}{\kappa}
\newcommand{\fy}{\varphi}
\newcommand{\y}{\eta}
\newcommand{\om}{\omega}
\newcommand{\De}{\Delta}
\newcommand{\Te}{\Theta}
\newcommand{\sg}{\mathfrak{s}}
\newcommand{\gec}{\,\gtrsim\,}
\newcommand{\lec}{\,\lesssim\,}
\newcommand{\LR}[1]{{\langle #1 \rangle}}
\newcommand{\EQ}[1]{\begin{equation}\begin{split} #1 \end{split}\end{equation}}
\newcommand{\CAS}[1]{\begin{cases} #1 \end{cases}}
\newcommand{\MAT}[1]{\begin{pmatrix} #1 \end{pmatrix}}
\newcommand{\pt}{&} \newcommand{\pr}{\\ &} \newcommand{\pq}{\quad}
\newcommand{\pn}{} \newcommand{\prq}{\\ &\quad} \newcommand{\prQ}{\\ &\qquad}
\newcommand\supp{\operatorname{supp}}
\theoremstyle{plain}
\newtheorem{thm}{Theorem}
\newtheorem{rem}[thm]{Remark}
\newtheorem{claim}[thm]{Claim}
\newtheorem{prop}[thm]{Proposition}
\newtheorem{lem}[thm]{Lemma}
\numberwithin{equation}{section} \numberwithin{thm}{section}
\begin{document}

\title[Global dynamics for energy-critical Schr\"odinger equation]
{Global dynamics above the ground state \\ for the energy-critical Schr\"odinger equation \\ with radial data}

\author{Kenji Nakanishi}
\address{Department of Pure and Applied Mathematics
Graduate School of Information Science and Technology
Osaka University, Toyonaka, Osaka 560-0043, JAPAN}
\email{nakanishi@ist.osaka-u.ac.jp}

\author{Tristan Roy}
\address{Graduate School of Mathematics, Nagoya University}
\email{tristanroy@math.nagoya-u.ac.jp}


\begin{abstract}
Consider the focusing energy critical Schr\"odinger equation in three space dimensions
with radial initial data in the energy space. We describe the global dynamics of all the solutions of which the energy is at most slightly larger than that of the ground states, according to whether it stays in a neighborhood of them, blows up in finite time or scatters. 
In analogy with \cite{nakaschlagschrod}, the proof uses an analysis of the hyperbolic dynamics near them and the variational structure far from them. The key step that allows
to classify the solutions is the \textit{one-pass} lemma. The main difference between
\cite{nakaschlagschrod} and this paper is that one has to introduce a scaling parameter in order to describe
the dynamics near them. One has to take into account this parameter in the analysis around the ground states by
introducing some orthogonality conditions. One also has to take it into account in the proof of the \textit{one-pass} lemma by comparing the contribution in the variational region and in the hyperbolic region.
\end{abstract}

\maketitle

\tableofcontents

\section{Introduction}
In this paper, we consider the semilinear Schr\"odinger equation on $\mathbb{R}^3$ with the focusing energy-critical power for $u=u(t,x):\R^{1+3}\to\C$:
\EQ{ \label{Eqn:SchrodCrit}
 \pt i \partial_{t} u  - \De u   =   |u|^4 u,
 \pq u(0,x)=u_0(x)}
with radial initial data $u_{0} \in \dot{H}^{1}$ (or $H^{1}$).
Here $\dot{H}^{1}$ (resp.~$H^{1}$) is the standard homogeneous (resp.~inhomogeneous) Sobolev space in three dimensions, i.e., the completion of the Schwartz space with respect to the norm $ \| f \|_{\dot{H}^{1}} := \| \nabla f \|_{L^{2}} $
(resp.~$\| f \|_{H^{1}} := \| f \|_{L^{2}} + \| \nabla f \|_{L^{2}} $).
Our consideration is restricted throughout this paper to the radial subspace:
\EQ{
 \dot H^1\rad:=\{\fy\in\dot H^1 \mid \fy(x)=\fy(|x|)\}.}
A strong solution of \eqref{Eqn:SchrodCrit} is a solution that satisfies the Duhamel formula:
\EQ{ \label{Eqn:StrongSol}
 u(t) & = e^{- i t \De} u_{0} - i \int_{0}^{t} e^{-i(t-t')\De} ( |u|^4 u(t')) \,  dt'.}
It enjoys the following energy conservation law
\EQ{ \label{Eqn:Nrj}
 E(u(t)) & := \frac{1}{2} \int_{\R^3} |\nabla u(t,x)|^{2} \, dx
- \frac{1}{6} \int_{\R^3} |u(t,x)|^{6} \, dx = E(u(0)).}
\eqref{Eqn:SchrodCrit} can be written in the Hamiltonian form
$\partial_{t} u   = i E'(u)$,
where
$\LR{E'(u),h}= \partial_{\lambda} E(u + \lambda h)|_{\lambda =0}
 = -  \LR{\De u, h} - \LR{|u|^4 u, h}$,
and $\LR{\cdot,\cdot}$ denotes the real-valued inner product on $L^2(\R^3)$:
\EQ{
 (f | g):= \int_{\R^3} f(x)\bar{g}(x) dx, \pq \LR{f,g} := \Re(f | g). }
The symplectic form  $\omega$ associated to this Hamiltonian system is
\EQ{
 \omega (u, v) := \LR{i u,v}. }
This equation admits a family of radial stationary solutions called the ground states, described by the rotation parameter $\theta\in\R$ and the scaling parameter $\sigma\in\R$:
\EQ{
 W_{\theta,\sigma}(x):= e^{i \theta} e^{\sigma/2}  W (e^{\sigma} x)\in\dot H^1\rad,}
with
\EQ{ \label{Eqn:ExplFormW}
 W(x) := \left( 1 + \frac{|x|^{2}}{3} \right)^{-1/2}, }
which satisfy
\EQ{ \label{Eqn:StatSolW}
 - \De W_{\theta, \sigma} & = |W_{\theta,\sigma}|^4 W_{\theta,\sigma}. }
The two dimensional manifold of those stationary solutions is denoted by
\EQ{
 \cW := \{ W_{\te,\si} \mid \te,\si\in\R\}\subset\dot H^1\rad.}
The distance from $\cW$ and its $\de$-neighborhood are denoted by
\EQ{ \label{def dW}
 d_\cW(\fy):=\inf_{\te,\si\in\R}\|\fy-W_{\te,\si}\|_{\dot H^1},
 \pq B_\de(\cW):=\{\fy\in\dot H^1\rad \mid d_\cW(\fy)<\de\}.}
Note that this set is invariant for the complex rotation and $\dot H^1$ scaling.
Recall (see \cite{aubin,tal}) that $W$ is an extremizer for the Sobolev inequality, i.e.,
\EQ{ \label{Eqn:SobIneqMax}
 \| W \|_{L^6}/ \| W \|_{\dot{H}^{1}}= \sup \{ \|f\|_{L^6}/\|f\|_{\dot H^1} \mid 0\not=f  \in \dot{H}^{1} \}. }
The local well-posedness of \eqref{Eqn:SchrodCrit} has been studied in \cite{cazpaper,cazbook}.
See \cite{kenmer} for a summary of these results.
In particular, it is known that on an interval $J$ such that $\| e^{i t \triangle} u_{0} \|_{L_{t,x}^{10}(J\times\R^3)}$ is small enough, there exists a unique solution $u$ of \eqref{Eqn:StrongSol} in a subspace of $C(J,\dot H^{1})$.
This allows us to define the maximal time interval of existence $I(u):=(-T_{-}(u), T_{+}(u) )$ with $T_{+}(u)$, $T_{-}(u)$ denoting respectively the forward, backward maximal time of existence (in this class): see again \cite{kenmer} for more detail.
The next step is to understand the global behavior of \eqref{Eqn:StrongSol}.
Classification of radial solutions of \eqref{Eqn:SchrodCrit} was studied for $E(u_0)<E(W)$ in \cite{kenmer}, and that for $E(u_0)=E(W)$ in \cite{duymerle}.
These results are summarized as follows: For $u_0\in\dot H^1\rad$ with $E(u_0)\le E(W)$,
\begin{itemize}
\item If $\| \nabla u_{0} \|_{L^{2}} < \| \nabla  W \|_{L^{2}} $, 
then the solution is either $W^-$ up to symmetry, or scattering as $t\to\pm\I$, i.e., $T_\pm(u)=\I$ and there exist $u_\pm \in \dot{H}^{1}$ such that $ \lim\limits_{t \rightarrow \pm \infty} \| u(t) - e^{-i t \De} u_{\pm} \|_{\dot{H}^{1}} =0$.
\item If $\|\na u_0\|_{L^2}=\|\na W\|_{L^2}$ then $u(t)=u(0)\in\cW$. 
\item If $\| \nabla u_{0} \|_{L^{2}} > \| \nabla W \|_{L^{2}} $ and $u_0 \in L^{2}(\R^{3})$ 
then the solution is either $W^+$ up to symmetry, or blowing up both in $t>0$ and in $t<0$ (i.e., $T_\pm(u)<\I$),
\end{itemize}
where $W^\pm$ are the unique solutions which converge to $W$ strongly in $\dot H^1$ as $t\to\I$, satisfying $\pm(\|\na W^\pm\|_{L^2}-\|\na W\|_{L^2})>0$.
$W^-$ scatters as $t\to-\I$, while $W^+$ blows up in $t<0$.

The goal of this paper is to classify the global behavior of solutions with slightly more energy than the ground states.
Our main result is the following. Let
\EQ{
 \pt \cH^\ep  := \{ \fy \in \dot H^1\rad \mid E(\fy) < E(W) + \epsilon^{2}\},
 \pr \cS_\pm:=\{u_0 \in \dot H^1\rad \mid \text{the solution $u$ scatters as $t\to\pm\I$}\},
 \pr \cB_\pm:=\{u_0 \in \dot H^1\rad \mid \text{the solution $u$ blows up in $\pm t>0$} \}.}

\begin{thm} \label{Thm:Main}
There is an absolute constant $\ep_\star\in(0,1)$ such that for each $\ep\in(0,\ep_\star]$, there exist a relatively closed set $\cX_\ep\subset\cH^\ep$, and a continuous function $\Te:\cH^{\ep}\setminus\cX_\ep\to\{\pm 1\}$, with the following properties.
$\cW\subset\cX_\ep\subset B_{C\ep}(\cW)$ for some absolute constant $C\in(0,\I)$. The values of $\Te$ are independent of $\ep$.
For each $u_0\in\cH^{\ep}$ and the solution $u$ of \eqref{Eqn:SchrodCrit},
\EQ{
 I_0(u)  := \{ t \in I(u) \mid u(t)\in\cX_\ep\}}
is either empty or an interval. Hence $I(u)\setminus I_0(u)$ consists of at most two open intervals.
Let $\si\in\{\pm\}$. If $\Te(u(t))=+1$ for $t$ close to $T_\si(u)$, then $u_0\in\cS_\si$.
If $\Te(u(t))=-1$ for $t$ close to $T_\si(u)$ and $u_0\in L^2(\R^3)$, then $u_0\in\cB_\si$.
\end{thm}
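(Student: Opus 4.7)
The plan is to adapt the Nakanishi-Schlag hyperbolic-variational scheme \cite{nakaschlagschrod} to the energy-critical NLS. The essential new feature, already signaled in the abstract, is that the manifold $\cW$ carries a free scaling parameter $\si$ in addition to the phase $\te$, which forces one to carry $\si$ through the orthogonal decomposition near $\cW$, the linearized spectral analysis, and---most delicately---the localized virial that drives the one-pass lemma.

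\emph{Local analysis near $\cW$ and ejection.} For $u \in B_\de(\cW)$ with $\de$ sufficiently small, write $u = W_{\te,\si} + v$ with $(\te,\si)$ chosen to minimize $\|u - W_{\te,\si}\|_{\dot H^1}$; this enforces the orthogonality $v \perp \Span\{i W_{\te,\si}, \La W_{\te,\si}\}$, where $\La := \tfrac12 + x\cdot\na$ generates the scaling. The linearized Hamiltonian around $W$, restricted to the radial $\dot H^1$-complement of the symmetry directions, admits a single pair of simple hyperbolic eigenvalues $\pm k$ with $k > 0$ and is coercive on the center-stable part. Introducing the unstable coordinate $\la(u) \in \R$, one obtains $\|v\|_{\dot H^1}^2 \simeq \la^2 + \|v_c\|_*^2$ together with $\dot\la = k\la + O(\|v\|_{\dot H^1}^2)$. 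The ejection lemma then says: if $d_\cW(u(t_0)) \le \ep$ and $d_\cW$ first reaches $A\ep$ at a later time $t_1$ for some large absolute constant $A$, then $\sign\la(u(t))$ is constant on $[t_0, t_1]$, $|\la|$ grows exponentially, and $|\la|$ dominates $\|v_c\|_*$ once $d_\cW$ exits this annulus.

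\emph{The sets $\cX_\ep$, the function $\Te$, and the one-pass lemma (main obstacle).} Define $\cX_\ep$ as those $u_0 \in \cH^\ep$ whose forward and backward trajectories remain in $B_{C\ep}(\cW)$ on a short time window around $0$; this is relatively closed in $\cH^\ep$, contains $\cW$, and sits inside $B_{C\ep}(\cW)$. Outside a slightly larger neighborhood $B_{R\ep}(\cW)$ with $R \gg 1$, a quantitative refinement of the variational analysis of \cite{kenmer} shows $|K(u)| \gec 1$ on $\cH^\ep$, where $K(u) := \|\na u\|_{L^2}^2 - \|u\|_{L^6}^6$; set $\Te(u) := \sign K(u)$ there, and extend $\Te$ continuously into the transition region $B_{R\ep}(\cW) \setminus \cX_\ep$ by matching with $\sign\la(u)$, the ejection lemma ensuring that the two definitions agree on the overlap. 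The one-pass lemma asserts that no trajectory can leave $\cX_\ep$ and later return. I would prove it by contradiction using the localized virial
\[
 V_R(t) := 2\Im \int_{\R^3} \bar u(t,x) \, (x\chi_R)\cdot\na u(t,x) \, dx,
 \qquad \dot V_R = -2 K(u(t)) + \mathcal{E}_R(t),
\]
with $\chi_R$ a radial cutoff at scale $R \gg e^{-\si(t)}$. Outside $\cX_\ep$ the functional $K$ has a fixed sign, so integration yields a lower bound on $|V_R(t) - V_R(t_0)|$ growing linearly in $|t - t_0|$, while $V_R = O(\ep)$ at each entry/exit of $\cX_\ep$, giving a contradiction. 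The main difficulty---and the genuinely new point compared to \cite{nakaschlagschrod}---is the calibration of $R$ against the a priori unbounded range of $\si(t)$: one must either show $\si(t)$ stays in a compact range on the excursion (using energy trapping together with the ejection estimates), or split the excursion into scale-coherent sub-intervals and sum the variational gain uniformly, comparing at each step the scale-invariant hyperbolic contribution (carried by $\la$) against the scale-sensitive virial contribution (governed by $R$). This bookkeeping is where I expect the main work of the paper to lie.

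\emph{Scattering and blow-up after one pass.} Once the one-pass lemma is established, $\Te(u(t))$ is eventually constant as $t \to T_\pm(u)$. If $\Te \equiv +1$ there, then $u$ eventually stays in $\{K > 0\} \setminus \cX_\ep$ with $E(u) < E(W) + \ep^2$; a Kenig-Merle concentration-compactness/rigidity argument \cite{kenmer}, adapted to the just-above-threshold regime with the trapped set $\cX_\ep$ excluded, delivers scattering, so $u_0 \in \cS_\si$. If $\Te \equiv -1$ and $u_0 \in L^2$, the Glassey virial identity $\p_t^2 \|x u\|_{L^2}^2 = 16 K(u) \le -c < 0$ forces $T_\pm(u) < \I$, so $u_0 \in \cB_\si$. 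Continuity of $\Te$ on $\cH^\ep \setminus \cX_\ep$ then follows from continuous dependence of the flow, continuity of the functionals $K$ and $\la$, and the local constancy of $\sign\la$ along trajectories near $\cW$ supplied by the ejection lemma.
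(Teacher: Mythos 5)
Your outline reproduces the general Nakanishi--Schlag scheme that the paper follows (modulation near $\cW$, ejection, variational estimates, a one-pass lemma via a truncated virial, Kenig--Merle for scattering), but the central new step is missing rather than merely compressed. In the one-pass lemma you acknowledge that the difficulty is calibrating the virial cutoff $R$ against the a priori unbounded scaling $\si(t)$, and then defer it as ``bookkeeping''; moreover the accounting you do give does not close. The boundary values of the truncated virial are not $O(\ep)$ but $O(m^2\de)$, growing quadratically in the cutoff radius (cf.\ \eqref{Eqn:EstBound}), and on the part of the excursion near $\cW$ the functional $K$ is only of size $\ti d_\cW(u)$ while the physical-time duration there scales like $e^{-2\si}$, so ``$|K|\gec 1$ outside a neighborhood, hence linear growth of $V_R$ in $t$ versus $O(\ep)$ endpoint terms'' cannot produce a contradiction. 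The paper instead decomposes $[t_a,t_b]$ into hyperbolic intervals and a variational set, measures the hyperbolic gain by $m_H^2\de_M$ with $m_H=\sup_{I_H}e^{-\si}$, and chooses the cutoff relative to $m_H$ and to radii $m_V^{\pm}$ defined by the spatial concentration of the (exterior) kinetic energy; and in the case $\Te=+1$ the virial comparison alone is not enough: one needs the decay estimate \eqref{u/r small} together with Bourgain's energy-induction argument to strip off a bubble and drop strictly below $E(W)$, then the below-threshold theory of \cite{kenmer} and a long-time perturbation argument to contradict the return to $B_\de(\cW)$. Neither the quadratic-in-$m$ boundary terms, the $m_H$ versus $m_V^\pm$ comparison, nor the Bourgain reduction appears in your plan, so Proposition \ref{Prop:OnePassLemma} --- the heart of the theorem --- is not actually proved.

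Two further points. For blow-up you invoke the Glassey identity $\p_t^2\|xu\|_{L^2}^2\sim K(u)$, which requires finite variance $xu_0\in L^2$, whereas the theorem assumes only $u_0\in L^2$; the paper instead uses a localized (Ogawa--Tsutsumi type) virial \eqref{Eqn:OgawTsut}, controlling the cutoff errors by the conserved $L^2$ norm and the radial Gagliardo--Nirenberg inequality, and closes with the differential inequality $\dot f\gtrsim f^2$ --- fixable for radial $L^2$ data, but a genuine gap as written. On the scattering side, ``a Kenig--Merle argument adapted to the just-above-threshold regime'' hides the critical-specific issues the paper must handle: possible finite-time blow-up despite the uniform $\dot H^1$ bound \eqref{unif bd Te+}, compactness of the critical element only modulo the scaling parameter, and the use of the ejection and one-pass lemmas to keep the critical element away from $\cW$ so that $\Te=+1$ is inherited; these occupy Section \ref{Section:LongTimeFarGd} and are not routine adaptations. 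Your modulation choice (orthogonality to $iW,\La W$ from distance minimization) is a legitimate alternative to the paper's fixed test function $\chi$, but by itself it does not affect the above deficiencies.
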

In other words, every solution with energy less than $E(W)+\ep^2$ can stay in $\cX_\ep$  only for an interval of time, though it can be the entire existence time.
Once the solution gets out of $\cX_\ep$ , it has to either scatter or blow-up, according to the sign function $\Te(u)$, though we need an additional condition $u_0\in L^2(\R^3)$ to ensure the blow-up.

The above properties hold in both the time directions. Concerning the relation between forward and backward dynamics, we have
\begin{thm} \label{Thm:4 sets}
For any $\ep>0$, each of the 4 intersections
\EQ{
 \cS_-\cap\cS_+,\pq \cB_-\cap\cS_+,\pq \cS_-\cap\cB_+,\pq \cB_-\cap\cB_+}
has non-empty interior in $\cH^\ep\cap L^2(\R^3)$.
\end{thm}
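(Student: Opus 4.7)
\emph{Plan.} I would produce, inside each of the four intersections, a concrete open subset of $\cH^\ep\cap L^2$. Openness of the scattering pieces comes from standard $\dot H^1$ long-time perturbation theory; openness of the blow-up pieces reduces, via Theorem~\ref{Thm:Main}, to openness of the set $\{\Te=-1\}$ in $\cH^\ep\setminus\cX_\ep$, which follows from continuity of $\Te$ combined with preservation of $L^2$ along the flow.

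\emph{Parallel cases.} For $\cS_-\cap\cS_+$, take $u_0=0$: a full $H^1\cap L^2$ neighborhood consists of small data that scatter in both directions. For $\cB_-\cap\cB_+$, take $u_0=\la\,W\cdot\chi_R$ with $\la$ slightly greater than $1$ and $R$ large; then $u_0\in L^2$, $E(u_0)<E(W)<E(W)+\ep^2$, and $\|\nabla u_0\|_{L^2}>\|\nabla W\|_{L^2}$, so the sub-threshold two-sided blow-up result of \cite{kenmer} applies, and the three strict inequalities are open in $H^1\cap L^2$.

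\emph{Mixed cases.} Let $Y^+$ and $Y^-$ denote the unstable and stable eigenmodes of the linearization of \eqref{Eqn:SchrodCrit} at $W$, with eigenvalues $\pm\mu$; they decay exponentially and therefore lie in $H^1\cap L^2$. For small real parameters $a,b$ and large $R$ set
\EQ{
 u_0 \, := \, W\,\chi_R \, + \, a\,Y^+ \, + \, b\,Y^-.
}
The cutoff is essential because $W\notin L^2$. One checks that $d_\cW(u_0)\lec|a|+|b|+o_R(1)$ and $E(u_0)=E(W)+O(|a|^2+|b|^2)+o_R(1)$, so $u_0\in\cH^\ep\cap L^2$ for $|a|,|b|\ll\ep$ and $R$ large. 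The ejection lemma from the proof of Theorem~\ref{Thm:Main}, applied after modulating out the phase and scaling parameters, yields exit times $t_\pm$ with $\pm t_\pm>0$ at which $u$ leaves $\cX_\ep$ in the direction $\sign(a)\,Y^+$ forward, respectively $\sign(b)\,Y^-$ backward. The one-pass lemma forbids re-entry and identifies $\Te(u(t_\pm))$ with $\sign(a)\,s_+$, $\sign(b)\,s_-$ respectively, for fixed signs $s_\pm\in\{\pm 1\}$ determined by the variational dichotomy on the two sides of the stable and unstable manifolds of $\cW$. Choosing the signs of $a,b$ independently realizes all four combinations $(\Te_-,\Te_+)$; combined with $u_0\in L^2$, Theorem~\ref{Thm:Main} then turns $\Te=-1$ into finite-time blow-up, producing elements of $\cS_+\cap\cB_-$ and $\cS_-\cap\cB_+$. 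Openness in $\cH^\ep\cap L^2$ follows from continuity of $\Te$ and continuity of $t_\pm$ in the data.

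\emph{Main obstacle.} The essential step is the identification of the signs $s_\pm$: that the side of the stable/unstable manifold of $\cW$ along which the solution ejects corresponds to the correct value of $\Te$ at the exit. In the subcritical framework of \cite{nakaschlagschrod} this came from a single variational inequality $K\lessgtr 0$. In the present energy-critical setting, as the authors emphasize in the abstract, the scaling parameter must first be modulated out via orthogonality conditions and the resulting refined functional carefully compared between the variational and the hyperbolic regions, which is precisely the content of the one-pass lemma in this paper. A secondary, purely technical point is controlling the effect of the cutoff $\chi_R$ on the mode decomposition and the subsequent ejection/one-pass analysis; this is $o_R(1)$ thanks to the $|x|^{-1}$ decay of $W$ and the exponential decay of $Y^\pm$.
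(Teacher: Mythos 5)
Your construction for the mixed cases --- a large-$R$ truncation of $W$ (to get $L^2$ data) plus small multiples of the stable and unstable eigenmodes, with the forward/backward exit signs read off from the hyperbolic dynamics after modulation, then converted into scattering/blow-up via the sign functional and Theorem \ref{Thm:Main}, and openness by stability of the whole argument --- is essentially the paper's proof in Section \ref{Sec:ProofThm2}, where the data are taken with $\vec\la(0)=\be(\pm1,0),\,\be(0,\pm1)$, the cutoff error $-\phi_R^CW$ is pushed into $\ga(0)$ by symplectic projection with $\|\phi_R^CW\|_{\dot H^1}^2\le\be^4$ (your ``$R$ large after $a,b$''), and the explicit $\cosh/\sinh$ evolution \eqref{hyperbolic} supplies the time at which the ejection lemma applies. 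The only differences are minor: the paper handles $\cS_-\cap\cS_+$ and $\cB_-\cap\cB_+$ with the same family (the $\pm\be(1,0)$ data have $E<E(W)$, so Kenig--Merle applies directly) where you use $0$ and $\la\,\phi_R W$ instead, which also works; and the identification of $\Te$ at exit comes from the ejection estimate \eqref{Eqn:DynK} together with Proposition \ref{Prop:SignProp}, not from the one-pass lemma as your last paragraph suggests (the one-pass lemma only prevents return and feeds into Proposition \ref{Prop:FarFromGd}).
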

In particular, there are infinitely many solutions which scatter on one side of time and blow up on the other.
According to the previous theorem, such transition can occur only by changing $\Te(u)$ from $+1$ to $-1$ or vice versa, going through $O(\ep)$ neighborhood of $\cW$, but the change is allowed at most once for each solution.
Note however that there may well exist blow-up inside the neighborhood of $\cW$, as the equation is energy-critical. It is indeed the case for the energy-critical wave equation.
More precise dynamics around $\cW$ should be studied elsewhere.

In the proof of the above results, we will explicitly construct, in terms of the eigenfunctions of the linearized operator, the functionals $\ti d_\cW$ and $\Te$, as well as open initial data sets in the 4 intersections.

Now we explain the main ideas of this paper and how it is organized. The proof of Theorem \ref{Thm:Main} relies upon a strategy that was pioneered by the first author and Schlag in \cite{nakschlagkg} in the study of the nonlinear Klein-Gordon equation with the focusing cubic nonlinearity. It relies upon two components: an \textit{ejection lemma} and a \textit{one-pass lemma}.

\medskip

The \textit{ejection lemma} aims at describing the dynamics of the solution when it is in the exit mode, i.e., when it is close to $\cW$ and moving away from it. By analogy with the dynamics of solutions of linear differential equations, we would like its dynamics to be ruled by that of its unstable eigenmode of the linearized operator around $\cW$.
In order to verify this statement, one has to control the orthogonal component of the spectral decomposition of the remainder resulting from the linearization  around $\cW$.
We would like to control this component by using the quadratic terms resulting from the Taylor expansion of the energy around $\cW$.
This can be done if and only if the remainder satisfies two orthogonality conditions: see Proposition \ref{Prop:ControlOrth}.
In order to satisfy these conditions, one has to give two degrees of freedom to the decomposition of the solution around $\cW$: a rotation parameter (this was done in \cite{nakaschlagschrod}) and a scaling parameter: see Propositions \ref{Prop:OrthDecompu} and \ref{Prop:LinearParamW}.
Then, one also has to control the evolution of these two parameters.
We prove in Proposition \ref{Prop:DynEjection} that we can close the argument.
More precisely the dynamic of the solution close to $\cW$ and in the exit mode is dominated by the exponential growth of the unstable eigenmode;
moreover, a relevant functional (denoted by $K$) grows exponentially and its sign eventually becomes opposite to that of the eigenmode.

\medskip

The \textit{one-pass lemma} (see Proposition \ref{Prop:OnePassLemma} and Section \ref{Section:OnePassLemma} for more details) aims at classifying the fate of the solution.
A direct consequence of  this lemma is the dichotomy described in the statement of Theorem \ref{Thm:Main}.
It shows that the orbit cannot cross a neighborhood of $\cW$ more than once.
The proof is by contradiction.
Assuming that the solution crosses this neighborhood more than once, then it means that the solution is at two different times $t_{a}$, $t_{b}$ close to $\cW$ and in the ejection mode (forward and backward respectively in time).
So we can apply the ejection lemma as long as we are not so far from $\cW$ and then variational estimates (see Proposition \ref{Prop:VarEst}) far from $\cW$.
The contradiction appears when we integrate by part a localized virial identity \eqref{Eqn:RHS}.
The left-hand side is much smaller than the right-hand side thanks to the exponential growth of a relevant functional (denoted by $K$) in the ejection mode and variational estimates far from $\cW$.
The process involves a parameter $m$, which is the cut-off radius for the localization.
Notice that unlike the subcritical case, one has to take into account the scaling parameter defined by the ground states to which the solution is close to.
This requires a much more complicated analysis since we have no control of this parameter.
It is also harder than the energy-critical wave equation, for which it is easy to localize virial and energy estimates in space-time, thanks to the finite speed of propagation (see \cite{KNS}).
Indeed, this part of analysis is the main novelty of this paper.
In the case where $\Te(u(t))=+1$ after ejection (see Section \ref{Section:OnePassLemma}), one introduces a radius of the concentration of the kinetic part of the energy (see definition of $m_{V}^+$) and the hyperbolic parameter (see definition of $m_{H}$), estimates $K$ along with some error terms (generated by the cut-off) in the hyperbolic region and the variational region, and compares these estimates.
In the worst scenario, one proves a decay estimate (see \eqref{u/r small}) in the variational region and uses this estimate to implement Bourgain's energy induction method \cite{bourgjams}: this allows to construct a solution whose energy is smaller than the original one by a nontrivial amount (in particular it is smaller than that of the ground states), then the theory below the ground state energy (see \cite{kenmer}) implies that it is not close to the ground states, neither is the original solution by a perturbation argument, contradicting the assumption of returning orbit.
In the case where $\Te(u(t))=-1$ after ejection, we introduce a threshold (see definition of $m_{V}^-$) that allows to compare $K$ with the main part of the virial identity; then, by integrating the virial identity, we can prove that this threshold must be very large; then, by proving a decay estimate, one can show that this threshold is not so large, which leads to a contradiction.

\medskip

The fate of the solution depends on $\Te(u(t))$ when $u$ is ejected. If $\Te(u(t))=-1$ and $u_0\in L^2$ then we prove that it blows up in finite time; if $\Te(u(t))=+1$ then we prove that it is scattering: see Section \ref{Section:LongTimeFarGd}.
The scattering is proved by a modification of Kenig-Merle approach \cite{kenmer} and arguments from \cite{nakaschlagschrod}.
Unlike the subcritical case, one has to deal with possible blow-up in finite time, although the $\dot{H}^{1}$ norm is bounded.
The proof is by contradiction.
Assuming that scattering fails, then one can find a critical level of energy above which scattering does not hold for solutions that are far from the ground states and $\Te(u)=+1$.
But this means that there exists a sequence $(u_{n})_{n \geq 1}$ that satisfies the properties that we have just mentioned (in fact, the distance can be upgraded from far to very far, by appealing to the ejection lemma), and, thanks to a concentration compactness procedure, the fact that the energy of $u_{n}$ is just above that of the ground states, one can construct a critical element $U_{c}$ that does not scatter, has energy equal to the critical level of energy, is far from the ground states, and satisfies $\Te(U_{c})=+1$.
Moreover its orbit is precompact up to scaling.
By using Kenig-Merle's arguments, one sees that $U_{c}$ does not exist.

$\textbf{Acknowledgments}:$ The second author would like to thank W. Schlag and T. Duyckaerts for interesting
discussions related to this problem while he visited University of Chicago in April 2012 and IHP in June 2012. The
second author was supported by a JSPS fellowship.

\section{Notation}
In this section, we set up some notation that appear in this paper.
If $x$ is a complex number then $x = \Re{(x)} + i \Im { (x) } = x_{1} + i x_{2}$.
Here $\Re{(x)}$ and $x_{1}$ (resp.~$\Im{(x)}$
and $x_{2}$) denote the real part (resp.~the imaginary part) of $x$.
Given $x$, $y$ two real numbers, $x \lesssim y$ (resp.~$\gtrsim$) means that there exists a universal constant $C > 0 $ such that $x \leq C y$ (resp.~$x \geq C y $).
For any function $f$ on $\R$ or $[0,\I)$, and for any $m \in (0,\I)$, we denote by $f_{m}$ the following rescaled function
\EQ{
 f_{m}(r) := f(r/m).}

$L^p_x=L^p$ denotes the standard $L^p$ space on $\R^3$.
Some estimates that we establish in this paper require the Littlewood-Paley technology, which we set up now.
The Fourier transform of $\fy\in\cS'(\R^3)$ is denoted by $\widehat\fy$.
Let $\phi:\R\to[0,\I)$ be a smooth even function satisfying $t\phi'(t) \le 0$ and
\EQ{ \label{def phi}
 |t|\le 1\implies \phi(t)=1, \pq |t|\ge 2 \implies \phi(t)=0.}
The complement of this smooth cut-off is denoted by
\EQ{
 \phi^C:=1-\phi. }
For any $m>0$, Littlewood-Paley operators $P_{<m}$, $P_{\ge m}$ and $P_m$ are defined by
\EQ{
 \pt \widehat{P_{< m} f}(\xi) := \phi_m(|\xi|) \widehat{f}(\xi),
 \pq P_{\geq m}:= 1 - P_{< m},
 \pq P_{m}:=P_{<m}-P_{<m/2}.}

The following functionals on $\dot H^1(\R^3)$ play crucial roles in variational arguments.
\begin{align}
 & K(f)  := \| \nabla f \|^{2}_{L^{2}} - \| f \|^{6}_{L^{6}},
 \\ \label{Eqn:DefI}
 & I(f) := E(f) - K(f)/2 = \| f \|^{6}_{L^{6}}/3,
 \\ \label{Eqn:RelNrjK}
 &G(f)  := E(f) - K(f)/6
  =  \|\na f \|^{2}_{L^2}/3.
\end{align}
It follows from \eqref{Eqn:SobIneqMax} and similar arguments to ~\cite{nakschlagkg} that  
\begin{align}
 E(W) & = \inf \{ E(\fy) \mid 0\neq \fy \in \dot{H}^{1}, K(\fy)=0 \}
  \label{Eqn:Charac2GdState} \\
 &= \inf \{ G(\fy) \mid 0 \neq\fy \in \dot{H}^{1}, K(\fy) \leq 0 \}
  \label{Eqn:Charac3GdState} \\
 & = \inf \{ I(\fy) \mid  0\neq\fy \in \dot{H}^{1}, K(\fy) \leq 0 \}
  \label{Eqn:Charac4GdState}
\end{align}

Let $S^{\sigma}_{a}$ be the one-parameter group of dilation operators defined as follows
\EQ{
 S^{\sigma}_{a} f(x) & := e^{(3/2 + a) \sigma} f(e^{\sigma} x),}
and let $S'_{a}:= \partial_{\sigma} S^{\sigma}_{a}|_{\sigma=0}$ be its generator. It is easy to see that the adjoint is given by $(S_{a}^{\sigma})^{*}=S^{-\si}_{-a}$, hence by differentiating in $\si$,
\EQ{ \label{Eqn:AdjointS}
 (S'_{a})^{*} = - S'_{-a}.}

We denote by $S$, $W$, $\bar S$ and $N$ the following mixed $L^p$ spaces on $\R^{1+3}$
\EQ{
 \pt S:= L_{t,x}^{10}(\R^{1+3}),
 \pq W := L_{t}^{10}(\R;L_{x}^{30/13}(\R^3)),
 \pr \bar{S}:= L_{t,x}^{10/3} (\R^{1+3}),
  \pq N := L_{t}^{2}(\R;L_{x}^{6/5}(\R^3)).}
We also use the homogeneous Sobolev spaces defined by completion of the Schwarz space with respect to the norm
\EQ{
 \|v\|_{X^1}:=\|\na v\|_X\pq (X=W,\bar{S},N).}
For any interval $J\subset\R$ and any function space $X$ on $\R^{1+3}$, the restriction of $X$ onto $J$ is denoted by $X(J)$.
The Sobolev embedding implies
\EQ{ \label{Eqn:RelSJWJ}
 \| v \|_{S(J)} & \lesssim \|v \|_{W^1(J)}.}
We recall the $L^p$ decay and the Strichartz estimates (see e.g., \cite{keeltao}). For any $p\in[2,\I]$
\EQ{ \label{Lp decay}
 \|e^{it\De}\fy\|_{L^p_x} \lec |t|^{-3(1/2-1/p)}\|\fy\|_{L^{p'}_x},}
where $p'=p/(p-1)$, and for any interval $J\ni 0$,
\EQ{ \label{Strich}
 \pt\|e^{-it\De}\fy\|_{(L^\I_tL^2_x\cap \bar{S} \cap W)(\R)}
 \lec \|\fy\|_{L^2},
 \pr\left\|\int_0^te^{-i(t-s)\De}F(s)ds\right\|_{(L^\I_tL^2_x \cap \bar{S} \cap W)(J)} \lec \|F\|_{N(J)}.}

In this paper, we constantly use the linearized operator $\mathcal{L}$ defined by
\EQ{
 \mathcal{L} f  = L_{+} f_{1} + i L_{-} f_{2},
  \pq L_{+} := - \De - 5 W^{4},
  \pq L_{-} :=  - \De - W^{4}.}
We recall some spectral properties of $ \mathcal{L}$ (see \cite{duymerle,gril,schlag}):
\begin{itemize}
\item It has two resonance functions $i W$ and $W'$.
\item It has two simple eigenvalues $\pm \mu$ (with $\mu >0$) and
two smooth, exponentially decaying eigenfunctions $g_{\pm} =g_{1} \mp i g_{2}$
that satisfy $i \mathcal{L} g_{\pm}= \pm \mu g_{\pm}$.
In other words, $L_+g_1=-\mu g_2$ and $L_-g_2=\mu g_1$.
\item $L_{-} \geq 0$ and $\Ker(L_{-})=\Span\{W\}$ on $\dot H^1\rad$.
\item $\Ker(L_{+}) = \Span\{W'\}$ on $\dot H^1\rad$.
\end{itemize}
Fix a real-valued radial function $\chi \in \cS\rad(\R^3)\subset L^{6/5}(\R^3)$ such that
\EQ{ \label{Eqn:ChoiceW}
 \pt \LR{W',\chi} \neq 0,\pq  \LR{W,\chi}\neq 0,
 \pq \LR{\chi,g_{1}} = \LR{\chi,g_{2}} = 0.}
To see existence of such $\chi$, suppose for contradiction that $\{g_1,g_2\}^\perp$ is included in $\{W'\}^\perp$ or $\{W\}^\perp$, where $A^\perp:=\{\fy\in\cS(\R^3)\mid A\ni\forall\psi,\ \LR{\fy,\psi}=0\}$.
Since $\cS$ is reflexive, it implies that either $W'$ or $W$ is in $\Span\{g_1,g_2\}$, contradicting the slow decay of $W',W$ by the rapid decay of $g_1,g_2$.
Therefore $\{g_1,g_2\}^\perp$ is not included in $\{W'\}^\perp$ nor $\{W\}^\perp$.
The same conclusion holds under the radial restriction, since orthogonality against radial functions is determined by the spherical average.
Hence there exists $\chi\in\cS\rad(\R^3)$ satisfying \eqref{Eqn:ChoiceW}.

\section{Proof of the main theorem}
\label{Section:ThmMain}
The proof of Theorem \ref{Thm:Main} relies upon some propositions stated below.
The first proposition, proved in Section \ref{Section:ContOrth}, gives a decomposition of a vector $\fy \in \dot{H}^{1}$
close to the ground states $\cW$, taking account of two parameters (the rotation parameter and the scaling parameter) and a constraint (the so-called orthogonality condition).

\begin{prop}[Orthogonal decomposition of $\fy$] \label{Prop:OrthDecompu}
There exist an absolute constant $0<\de_E\ll 1$ and a $C^1$ function $(\ti\te,\ti\si):B_{\de_E}(\cW)\to(\R/2\pi\Z)\times\R$ with the following properties.
For any $\fy \in B_{\de_E}(\cW)$, putting
\EQ{ \label{Eqn:Decomp1}
 \fy = e^{i\ti\te(\fy)} S_{-1}^{\ti\si(\fy)} (W + v),}
we have
\EQ{ \label{Eqn:Decomp2}
 \pt (v | \chi) =  0,
 \pq d_\cW(\fy) \sim \|v\|_{\dot H^1}.}
Moreover $(\ti\te(\fy),\ti\si(\fy))\in(\R/2\pi\Z)\times\R$ is unique for the above property.
Furthermore, if $\|\fy-W_{\te,\si}\|_{\dot H^1}\ll 1$ for some $(\te,\si)\in\R^2$, then
\EQ{ \label{Eqn:Decomp3}
  | (e^{i\ti\te(\fy)} - e^{i\te}, \ti\si(\fy)-\si)| \lec \| \fy - W_{\te,\si}\|_{\dot H^1}.}
\end{prop}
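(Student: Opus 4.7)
The plan is to apply the implicit function theorem at $W$ and then extend the decomposition along the orbit $\cW$ using the equivariance under the two-parameter group $G := (\R/2\pi\Z)\times\R$ acting by $(\te,\si)\cdot\fy := e^{i\te}S^\si_{-1}\fy$. I would introduce the $\C$-valued functional
$$F(\fy,\te,\si):=\bigl(S^{-\si}_{-1}(e^{-i\te}\fy)-W\,\big|\,\chi\bigr),$$
so that $F=0$ is exactly the orthogonality $(v\mid\chi)=0$ with $v:=S^{-\si}_{-1}(e^{-i\te}\fy)-W$ in the candidate decomposition $\fy=e^{i\te}S^\si_{-1}(W+v)$. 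At $(W,0,0)$ clearly $F=0$, and since $\chi$ is real,
$$\p_\te F\big|_{(W,0,0)}=-i\LR{W,\chi},\qquad \p_\si F\big|_{(W,0,0)}=-\LR{S'_{-1}W,\chi}=-\LR{W',\chi},$$
where $S'_{-1}W=\tfrac12 W+x\cdot\na W$ is the scaling generator that the paper denotes $W'$ (the element of $\Ker(L_+)$). Viewed as a map $\R^2\to\R^2$, the Jacobian of $F$ in $(\te,\si)$ at $(W,0,0)$ is diagonal with entries $-\LR{W,\chi}$ and $-\LR{W',\chi}$, both nonzero by \eqref{Eqn:ChoiceW}. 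The implicit function theorem then produces a $C^1$ solution $(\ti\te,\ti\si)$ on an $\dot H^1\rad$-neighborhood $U$ of $W$, with $(\ti\te,\ti\si)(W)=(0,0)$ and the standard Lipschitz bound $|(\ti\te(\fy),\ti\si(\fy))|\lec\|\fy-W\|_{\dot H^1}$.

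To extend from $U$ to the full tube $B_{\de_E}(\cW)$: given $\fy\in B_{\de_E}(\cW)$, choose $(\te_0,\si_0)$ with $\|\fy-W_{\te_0,\si_0}\|_{\dot H^1}<2d_\cW(\fy)$, so that $\fy':=S^{-\si_0}_{-1}(e^{-i\te_0}\fy)$ lies in $U$ once $\de_E$ is small enough. Apply the local IFT to $\fy'$ and set $\ti\te(\fy):=\te_0+\ti\te(\fy')$, $\ti\si(\fy):=\si_0+\ti\si(\fy')$. The functional $F$ and the remainder $v$ transform equivariantly under the $G$-action on $(\fy,\te,\si)$, so the construction is independent of the choice of $(\te_0,\si_0)$, and local IFT uniqueness promotes to uniqueness on $B_{\de_E}(\cW)$. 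The same transport gives \eqref{Eqn:Decomp3}: the exponent $a=-1$ in $S^\si_a$ is precisely the $\dot H^1$-critical scaling, so $S^\si_{-1}$ is an isometry on $\dot H^1$, hence $\|\fy'-W\|_{\dot H^1}=\|\fy-W_{\te_0,\si_0}\|_{\dot H^1}$, and the local Lipschitz bound transfers directly.

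Finally, the equivalence $d_\cW(\fy)\sim\|v\|_{\dot H^1}$ follows from the same isometry. A direct computation gives $\fy-W_{\ti\te,\ti\si}=e^{i\ti\te}S^{\ti\si}_{-1}v$, hence $\|v\|_{\dot H^1}=\|\fy-W_{\ti\te,\ti\si}\|_{\dot H^1}\ge d_\cW(\fy)$. Conversely, pick $(\te_*,\si_*)$ with $\|\fy-W_{\te_*,\si_*}\|_{\dot H^1}\le 2d_\cW(\fy)$, apply \eqref{Eqn:Decomp3} to bound $|e^{i\ti\te}-e^{i\te_*}|+|\ti\si-\si_*|\lec d_\cW(\fy)$, and close via the triangle inequality through $W_{\te_*,\si_*}$, using that the $(\te,\si)\mapsto W_{\te,\si}$ map is smooth with bounded derivative on compact parameter ranges.

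The main obstacle, in my view, is the bookkeeping in the extension step: one must choose $\de_E$ small enough that the IFT Lipschitz neighborhood at $W$, once translated along $\cW$, genuinely covers all of $B_{\de_E}(\cW)$ with a globally consistent, $C^1$-smooth section. This is a standard slice / tubular-neighborhood argument leveraging the free $G$-action on $\cW$ together with the transversality of the slice $\{(v\mid\chi)=0\}$ provided by \eqref{Eqn:ChoiceW}, but it is where the smallness $\de_E\ll 1$ genuinely enters. The other ingredients — the derivative computations at $W$, the Lipschitz bound, and the norm equivalence — are routine once the slice structure is in place.
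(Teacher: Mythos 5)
Your proposal is correct and follows essentially the same route as the paper: the same orthogonality functional, the implicit function theorem at $W$ exploiting $\LR{W,\chi}\LR{W',\chi}\neq 0$ from \eqref{Eqn:ChoiceW}, extension along $\cW$ by the rotation/scaling group action, and the $\dot H^1$-isometry of $e^{i\te}S^{\si}_{-1}$ for \eqref{Eqn:Decomp3} and the equivalence $d_\cW(\fy)\sim\|v\|_{\dot H^1}$. The only cosmetic differences are that the $2\times 2$ Jacobian is anti-diagonal rather than diagonal in the natural $(\Re,\Im)\times(\te,\si)$ ordering (invertibility is all that matters), and that the paper settles uniqueness by pairing the difference of two admissible decompositions against $\chi$, which amounts to the same slice/transversality argument you invoke.
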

The second proposition, proved in Section \ref{Section:LinearParamW}, describes more precisely the decomposition in Proposition \ref{Prop:OrthDecompu}, taking
into account the spectral properties of $\mathcal{L}$. This decomposition does not use the radial symmetry.

\begin{prop}[Spectral decomposition of $v$] \label{Prop:DecompSpectv}
For any $v\in\dot H^1$, there exists a unique decomposition
\EQ{ \label{Eqn:Decompv}
  \pt v  = \lambda_{+}  g_{+} + \lambda_{-}  g_{-} + \gamma,
 \pq \la_\pm\in\R,\pq \ga\in\dot H^1,}
such that $\omega (g_{\pm}, \gamma) =0$. 
After normalizing $g_\pm$ (or $g_1$ and $g_2$) such that 
\EQ{   \label{Eqn:CondOmega1}
 \pt \omega(g_{+},g_{-}) = 2\LR{g_1,g_2}= 1,
 \pq \pm\om(W,g_\mp)=\LR{W,g_2}>0,}
the above decomposition is given by 
\EQ{ \label{Eqn:DecompLambdaZero}
 \la_\pm := \pm\om(v,g_\mp).}
Putting $\la_1:=(\la_++\la_-)/2$ and $\la_2:=(\la_+-\la_-)/2$, it can also be written as
\EQ{ \label{Eqn:DecompLambdaOne}
 \pt v  = 2\la_1 g_{1} - 2i \lambda_{2}  g_{2} + \gamma, \pq \la_1=  \LR{v_1|g_2}, \pq \la_2=-\LR{v_2|g_1},}
with $\LR{\ga_1|g_2}=\LR{\ga_2|g_1}=0$.

\end{prop}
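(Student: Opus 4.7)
The plan is to view the decomposition as two-dimensional linear algebra on the span of the eigenmodes $g_\pm$ together with their symplectic complement in $\dot H^1$, using the skew form $\om(u,v)=-\Im(u|v)$. All three assertions (the normalization, the existence/uniqueness, and the alternative form) will then reduce to short computations based on $g_\pm=g_1\mp ig_2$ with $g_1,g_2$ real and $v=v_1+iv_2$.

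\textbf{Step 1 (normalizations).} Expanding $g_\pm=g_1\mp ig_2$ directly gives $\om(g_+,g_-)=2\LR{g_1,g_2}$ and $\om(W,g_\mp)=\pm\LR{W,g_2}$. I would show $\LR{g_1,g_2}>0$ from $L_-g_2=\mu g_1$ and $L_-\ge 0$:
\[
\mu\LR{g_1,g_2}=\LR{L_-g_2,g_2}\ge 0,
\]
with equality forcing $\sqrt{L_-}g_2=0$, hence $L_-g_2=\mu g_1=0$ and then $L_+g_1=-\mu g_2=0$, contradicting $g_\pm\ne 0$. A common positive rescaling of $(g_+,g_-)$ then gives $\om(g_+,g_-)=1$. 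Granting the non-vanishing $\LR{W,g_2}\ne 0$ from the cited spectral analysis of $\mathcal{L}$, a joint sign flip of $(g_1,g_2)$ arranges $\LR{W,g_2}>0$.

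\textbf{Step 2 (existence and uniqueness).} For any $v\in\dot H^1$, set $\ga:=v-\la_+g_+-\la_-g_-$. Since $\om$ is skew-symmetric (so $\om(g_\pm,g_\pm)=0$) and $\om(g_+,g_-)=1$, the conditions $\om(g_\pm,\ga)=0$ unpack to
\[
\om(g_+,v)=\la_-,\qquad \om(g_-,v)=-\la_+,
\]
whose unique solution is $\la_+=\om(v,g_-)$, $\la_-=-\om(v,g_+)$, i.e.\ $\la_\pm=\pm\om(v,g_\mp)$; this then determines $\ga$. All pairings are legitimate since $g_\pm\in\cS(\R^3)$ while $v\in\dot H^1\subset L^6$.

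\textbf{Step 3 (alternative form).} Substituting $g_\pm=g_1\mp ig_2$ into $\la_+g_++\la_-g_-$ yields $(\la_++\la_-)g_1-i(\la_+-\la_-)g_2=2\la_1g_1-2i\la_2g_2$. Writing $v=v_1+iv_2$ and expanding $\om(v,g_\mp)=-\Im(v|g_\mp)$ gives
\[
\la_+=\LR{v_1|g_2}-\LR{v_2|g_1},\qquad \la_-=\LR{v_1|g_2}+\LR{v_2|g_1},
\]
hence $\la_1=\LR{v_1|g_2}$ and $\la_2=-\LR{v_2|g_1}$. The same bookkeeping applied to $\ga=\ga_1+i\ga_2$ turns the two conditions $\om(g_\pm,\ga)=0$ into $\LR{\ga_1|g_2}=\LR{\ga_2|g_1}=0$ (half-sum and half-difference). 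The only real obstacle is notational, namely keeping the signs and conjugations consistent under the identification of $\dot H^1$ as a real Hilbert space with the skew form $\om$; the underlying algebra is two-dimensional and fully non-degenerate.
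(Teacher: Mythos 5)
Your Steps 2 and 3 are correct, and the paper itself treats \eqref{Eqn:DecompLambdaZero} and \eqref{Eqn:DecompLambdaOne} as immediate consequences of the normalization; your positivity argument for $\LR{g_1,g_2}$, via $\mu\LR{g_1,g_2}=\LR{L_-g_2,g_2}\ge 0$ with equality forcing $g_1=g_2=0$, is essentially the paper's as well. The genuine gap is in Step~1, where you ``grant'' $\LR{W,g_2}\neq 0$ from ``the cited spectral analysis of $\cL$''. That non-vanishing is part of what \eqref{Eqn:CondOmega1} asserts, it is \emph{not} among the quoted spectral properties of $\cL$ (resonances $iW$, $W'$; eigenvalues $\pm\mu$ with eigenfunctions $g_\pm$; $L_-\ge 0$ with $\Ker L_-=\Span\{W\}$; $\Ker L_+=\Span\{W'\}$), and it is precisely the non-trivial content of the paper's proof of this proposition. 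Assuming it is assuming the crux: without it you cannot arrange the sign condition in \eqref{Eqn:CondOmega1}, on which the definition of $\la_1$ and the later sign analysis (e.g.\ \eqref{Eqn:ExpandK2nd} and \eqref{Eqn:DynK}) depend.

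The missing step is proved in the paper by contradiction, using the variational characterization \eqref{Eqn:Charac2GdState}. Suppose $\LR{W,g_2}=0$. Since $L_+W=-4W^5$ and $L_+g_1=-\mu g_2$, self-adjointness gives $4\LR{W^5,g_1}=-\LR{L_+W,g_1}=-\LR{W,L_+g_1}=\mu\LR{W,g_2}=0$. Now take the real perturbation $v=\al W+\de g_1$ with $0<\de\ll 1$ and $\al=O(\de^2)$ to be chosen. Expanding, $K(W+v)=-4\LR{W^5,v}+O(\|v\|_{\dot H^1}^2)=-4\al\|W\|_{L^6}^6+O(\de^2)$, so one can choose $\al=O(\de^2)$ with $K(W+v)=0$; on the other hand, since $E'(W)=0$, one has $E(W+v)=E(W)+\tfrac12\LR{L_+v,v}+O(\de^3)=E(W)-\tfrac{\mu}{2}\LR{g_1,g_2}\de^2+O(\de^3)<E(W)$ for $\de$ small, using the positivity of $\LR{g_1,g_2}$ you already established. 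This contradicts the fact that $E(W)$ is the infimum of $E$ over nonzero $\fy$ with $K(\fy)=0$, hence $\LR{W,g_2}\neq 0$, and then your rescaling and joint sign flip do complete the normalization. You need to supply this (or an equivalent) argument rather than cite it.
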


The third proposition, also proved in Section \ref{Section:LinearParamW}, aims at describing the dynamics of the solution near the ground states, using the decomposition in Proposition \ref{Prop:OrthDecompu}. Again, this does not use the radial symmetry.

\begin{prop}[Linearization and parametrization around $\cW$] \label{Prop:LinearParamW}
Let $u$ be a solution of \eqref{Eqn:SchrodCrit} on an interval $I$ in the form \eqref{Eqn:Decomp1}, i.e., $(\te,\si,v):I\to(\R/2\pi\Z)\times\R\times\dot H^1$ is defined by 
\EQ{ \label{Eqn:Decompu}
 u(t) = e^{i \theta(t)} S_{-1}^{\sigma (t)} (W + v(t)),
 \pq \te(t):=\ti\te(u(t)),\pq \si(t):=\ti\si(u(t)).}
Then, letting $\tau:I\to\R$ such that $\tau'(t):= e^{2 \sigma (t)}$, we have
\EQ{ \label{Eqn:Linearization}
 \partial_\ta v  & = i \mathcal{L} v  - (i \theta_\ta  + \sigma_\ta  S'_{-1}) (W + v ) -i N(v),}
where $\te_\ta=\frac{\p \te}{\p \ta}$ etc., and 
\EQ{
 N(f) \pt:= |W + f|^{4} (W + f) - W^{5} - \partial_{\lambda}|_{\lambda  =0}
\left( |W + \lambda f|^{4} (W + \lambda f) \right).}
Furthermore,  $\p_\ta(\te,\si)=O(\|v\|_{\dot H^1})$ and, decomposing $v$ by Proposition \ref{Prop:DecompSpectv}, 
\EQ{ \label{Eqn:EstDynLambda}
 \partial_\ta \lambda_{\pm}  & = \pm \mu \lambda_{\pm}  + O ( \| v  \|^{2}_{\dot H^1}), }
or equivalently,
\EQ{ \label{Eqn:EstDynLambdaOne}
 \pt\partial_\ta \lambda_{1} = \mu \lambda_{2}  + O ( \|v  \|^{2}_{\dot H^1}),
 \pq\partial_\ta \lambda_{2} = \mu \lambda_{1}  + O ( \|v  \|^{2}_{\dot H^1}).}
\end{prop}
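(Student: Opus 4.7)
The plan is to proceed in three steps: derive equation~\eqref{Eqn:Linearization}; solve an orthogonality system to bound $\p_\ta(\te,\si)$; then read off the dynamics of $\la_\pm$ from the spectral structure of $\mathcal{L}$. For the first step, I substitute the ansatz~\eqref{Eqn:Decompu} into~\eqref{Eqn:SchrodCrit}, apply the chain rule, and use that $S'_{-1}$ commutes with $S_{-1}^\si$ (one-parameter group) together with the scale covariances $\Delta S_{-1}^\si=e^{2\si}S_{-1}^\si\Delta$ and $|S_{-1}^\si f|^{4}(S_{-1}^\si f)=e^{2\si}S_{-1}^\si(|f|^{4}f)$ to strip $e^{i\te}$ and $S_{-1}^\si$ from the equation. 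This yields
\EQ{
 -\te_t(W+v)+i\si_t S'_{-1}(W+v)+i\p_t v=e^{2\si}[\Delta(W+v)+|W+v|^{4}(W+v)].
}
Passing from $t$ to $\ta$ via $\p_t=e^{2\si}\p_\ta$, inserting $\Delta W=-W^{5}$ from~\eqref{Eqn:StatSolW}, expanding $|W+v|^{4}(W+v)=W^{5}+5W^{4}v_{1}+iW^{4}v_{2}+N(v)$ via the definition of $N$, and multiplying by $-i$ so that the identity $-i\De v-5iW^{4}v_{1}+W^{4}v_{2}=i\mathcal{L}v$ can be recognized, I arrive at~\eqref{Eqn:Linearization}.

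For the second step, I pair~\eqref{Eqn:Linearization} with $\chi$ in the complex $L^{2}$ inner product. Since $(v(t)|\chi)=0$ for all $t$ by~\eqref{Eqn:Decomp2}, we get $(\p_\ta v|\chi)=0$, which after taking real and imaginary parts and using $\chi\in\R$, $(v|\chi)=0$, and~\eqref{Eqn:AdjointS} becomes a $2\times 2$ linear system for $(\te_\ta,\si_\ta)$. Its coefficient matrix is an $O(\|v\|_{\dot H^{1}})$ perturbation of $\mathrm{diag}(\LR{W,\chi},\LR{S'_{-1}W,\chi})$, which is invertible by~\eqref{Eqn:ChoiceW} once I observe that $S'_{-1}W\in\Ker L_+=\Span\{W'\}$ is a nonzero multiple of $W'$. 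The right-hand side of the system has size $\|v\|_{\dot H^{1}}$, since $\mathcal{L}v$ is linear in $v$ and $(N(v)|\chi)=O(\|v\|_{\dot H^{1}}^{2})$ follows from the pointwise bound $|N(v)|\lec W^{3}|v|^{2}+|v|^{5}$, Sobolev, and the rapid decay of $\chi$. Solving the system gives $\p_\ta(\te,\si)=O(\|v\|_{\dot H^{1}})$.

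For the third step, I differentiate $\la_\pm=\pm\om(v,g_\mp)$ in $\ta$ and substitute~\eqref{Eqn:Linearization}. The Hamiltonian nature of $i\mathcal{L}$ supplies the identity $\om(i\mathcal{L}f,g)+\om(f,i\mathcal{L}g)=0$; combined with $i\mathcal{L}g_\mp=\mp\mu g_\mp$ it gives $\om(i\mathcal{L}v,g_\mp)=\pm\mu\om(v,g_\mp)$, so the main contribution to $\p_\ta\la_\pm$ is $\pm\mu\la_\pm$. The parameter terms $\mp\om(i\te_\ta W+\si_\ta S'_{-1}W,g_\mp)$ vanish identically: from $L_-W=0$ and $L_+S'_{-1}W=0$ we have $iW,\,S'_{-1}W\in\Ker(i\mathcal{L})$, and applying the same Hamiltonian identity (noting $g_\mp$ has nonzero eigenvalue) forces $\om(iW,g_\mp)=\om(S'_{-1}W,g_\mp)=0$. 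The remaining contributions $\mp\om((i\te_\ta+\si_\ta S'_{-1})v,g_\mp)$ and $\mp\om(iN(v),g_\mp)$ are $O(\|v\|_{\dot H^{1}}^{2})$ by the previous step, the pointwise estimate on $N$, and the exponential decay of $g_\mp$. This gives~\eqref{Eqn:EstDynLambda}, and~\eqref{Eqn:EstDynLambdaOne} follows by taking half-sum and half-difference. The main obstacle is organizational rather than analytical: the symplectic and self-adjoint algebra of $\mathcal{L}$ must be tracked carefully so that the symmetry modes cancel in both step two and step three; once these algebraic facts are in hand, the estimates themselves are routine.
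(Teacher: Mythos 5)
Your proposal is correct and follows essentially the same route as the paper: substitute the ansatz and rescale time to obtain \eqref{Eqn:Linearization}, differentiate the orthogonality $(v|\chi)=0$ and invert a $2\times2$ system whose determinant is $\pm\LR{W,\chi}\LR{W',\chi}+O(\|v\|_{\dot H^1})$ to control $\p_\ta(\te,\si)$, then pair with $g_\mp$ via $\om$, the symmetry modes $iW$ and $S'_{-1}W$ dropping out and the remaining terms being quadratic. The only notable difference is that the paper records the sharper bound $\p_\ta(\te,\si)=O(\|\ga\|_{\dot H^1}+\|v\|_{\dot H^1}^2)$ (obtained by replacing $v$ with $\ga$ in the $\cL$-term using $(g_\pm|\chi)=0$), which is used later in the ejection analysis, whereas you prove only the $O(\|v\|_{\dot H^1})$ bound actually stated in the proposition.
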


The next proposition, proved in Section \ref{Section:ContOrth}, shows that the orthogonal direction $\gamma$ of $v$ in \eqref{Eqn:Decompv} 
can be controlled by the linearized energy:

\begin{prop}[Control of orthogonal direction] \label{Prop:ControlOrth}
For any function $w\in\dot H^1\rad$ satisfying $\LR{w_{1}, g_{2}}=0$,  we have
\EQ{ \label{Eqn:ControlOrth}
 \| \nabla w \|^{2}_{L^{2}} & \sim |(w | \chi)|^2 + \LR{\cL w,w}. }
\end{prop}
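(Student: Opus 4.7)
The plan is to prove the upper bound by Sobolev embedding, and the lower bound by a contradiction and compactness argument that exploits the spectral structure of $\cL$. For the upper bound, Sobolev gives $\|w\|_{L^6}\lec \|\nabla w\|_{L^2}$, hence $|(w|\chi)|^2\leq \|\chi\|_{L^{6/5}}^2\|w\|_{L^6}^2\lec \|\nabla w\|_{L^2}^2$ and $|\LR{\cL w,w}|\leq \|\nabla w\|_{L^2}^2+5\|W\|_{L^6}^4\|w\|_{L^6}^2\lec \|\nabla w\|_{L^2}^2$.

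For the reverse inequality, suppose it fails; then there is a sequence $(w_n)\subset\dot H^1\rad$ with $\|\nabla w_n\|_{L^2}=1$, $\LR{(w_n)_1,g_2}=0$, and $Q(w_n):=|(w_n|\chi)|^2+\LR{\cL w_n,w_n}\to 0$. Pass to a weak limit $w_n\rightharpoonup w_*$ in $\dot H^1\rad$. Compactness of the radial Sobolev embedding into $L^p_{\text{loc}}$ combined with the rapid decay of $W^4\in L^{3/2}$ yields $\int W^4|w_n|^2\to\int W^4|w_*|^2$ via truncation, and since $\chi,g_2\in\cS$ the linear pairings pass to the limit. The identity $\LR{\cL w_n,w_n}=1-\int(5W^4(w_n)_1^2+W^4(w_n)_2^2)$ combined with $Q(w_n)\to 0$ gives $\int(5W^4(w_*)_1^2+W^4(w_*)_2^2)=1+|(w_*|\chi)|^2\geq 1$, so $w_*\neq 0$; lower semi-continuity yields $Q(w_*)\leq 0$ while $\LR{(w_*)_1,g_2}=0$.

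The contradiction will come from showing $w_*=0$, using the spectral structure of $\cL$ on $\dot H^1\rad$: $L_-\geq 0$ with kernel $\R W$ and coercive on its complement; $L_+$ has a one-dimensional negative eigenspace $\R\phi_-$ (eigenvalue $-e^2$), kernel $\R W'$, and is coercive on $\{\phi_-,W'\}^\perp$. Decompose $(w_*)_1=\alpha\phi_-+\beta W'+r_1$ and $(w_*)_2=\gamma W+r_2$ with $r_j$ in the coercive subspaces, and write $g_1=a_-\phi_-+g_1^+$ with $a_-\neq 0$ (since $\LR{L_+g_1,g_1}=-\mu/2$). The identity $L_+g_1=-\mu g_2$ then gives $g_2=(a_-e^2/\mu)\phi_-+g_2^+$ with $g_2^+:=-\mu^{-1}L_+g_1^+\in\{\phi_-,W'\}^\perp$, and $\LR{W',g_2}=0$ since $L_+W'=0$. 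The constraint $\LR{(w_*)_1,g_2}=0$ reduces to $\alpha(a_-e^2/\mu)=-\LR{r_1,g_2^+}$, and a Cauchy--Schwarz estimate in the positive-definite form $\LR{L_+\cdot,\cdot}$ on $\{\phi_-,W'\}^\perp$ (using $L_+^{-1}g_2^+=-\mu^{-1}g_1^+$) yields $e^2\alpha^2\leq(1-\mu/(2a_-^2e^2))\LR{L_+r_1,r_1}<\LR{L_+r_1,r_1}$. Hence $\LR{L_+(w_*)_1,(w_*)_1}\gec \|\nabla r_1\|_{L^2}^2$ and $\LR{L_-(w_*)_2,(w_*)_2}\gec \|\nabla r_2\|_{L^2}^2$, so $Q(w_*)\gec \|\nabla r\|_{L^2}^2+|(w_*|\chi)|^2\geq 0$. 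Combined with $Q(w_*)\leq 0$, this forces $r=0$ and $(w_*|\chi)=0$; the constraint then gives $\alpha=0$, and $\LR{W',\chi},\LR{W,\chi}\neq 0$ force $\beta=\gamma=0$, contradicting $w_*\neq 0$. The main obstacle is this Cauchy--Schwarz step, converting the constraint phrased via the non-eigenfunction $g_2$ into a quantitative coercivity gain for $L_+$; the identities $L_+g_1=-\mu g_2$, $\LR{g_1,g_2}=1/2$, and the orthogonality $\LR{\chi,g_1}=\LR{\chi,g_2}=0$ in the choice of $\chi$ are precisely what make this work.
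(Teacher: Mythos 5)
Your upper bound and the overall compactness scheme (normalize, extract a radial weak limit $w_*$, use the decay of $W^4$ to pass the potential terms to the limit, and derive a contradiction from the structure of $w_*$) are sound, and your observation that $\int(5W^4(w_*)_1^2+W^4(w_*)_2^2)\ge 1$ forces $w_*\ne 0$ is a legitimate substitute for the paper's norm-convergence step. Your Cauchy--Schwarz computation turning the constraint $\LR{(w_*)_1,g_2}=0$ into a bound on the $\phi_-$-component is also correct as algebra; it plays the role of the paper's shorter argument, where one notes that if $\LR{L_+f,f}<0$ for some $f$ with $\LR{f,g_2}=0$, then $L_+$ would be negative definite on the two-dimensional span of $f$ and $g_1$ (the cross term vanishes since $L_+g_1=-\mu g_2$), contradicting that $L_+$ has only one negative eigenvalue.

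However, as written your treatment of the limit object has genuine gaps. First, the decompositions $(w_*)_1=\alpha\phi_-+\beta W'+r_1$ and $(w_*)_2=\gamma W+r_2$ with $r_j$ in $L^2$-orthogonal complements are not well defined: $W$ and $W'$ are resonances behaving like $|x|^{-1}$ at infinity, so they lie in $\dot H^1\rad$ but not in $L^2$ nor in $L^{6/5}$, and the pairings $\LR{r,W'}$, $\LR{r,W}$ diverge for a general $r\in\dot H^1\rad$ --- this is exactly the critical-exponent difficulty the paper's proof is built to avoid. Second, the quantitative coercivity you invoke, $\LR{L_+r_1,r_1}\gec\|\na r_1\|_{L^2}^2$ on $\{\phi_-,W'\}^\perp$ and $\LR{L_-r_2,r_2}\gec\|\na r_2\|_{L^2}^2$ on $\{W\}^\perp$, is not among the spectral facts quoted in the paper and is nontrivial here (zero is both the bottom of the essential spectrum and a resonance); establishing it would itself require a compactness argument of precisely the kind the proposition encodes, so it cannot simply be cited as known. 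Both gaps are repairable along lines close to what you already have: run the Cauchy--Schwarz step using only the projection onto the exponentially decaying eigenfunction $\phi_-$ (which is well defined), giving $\LR{L_+f,f}\ge 0$ whenever $\LR{f,g_2}=0$; then, for the limit, note that $(w_*)_1$ and $(w_*)_2$ minimize nonnegative quadratic forms, so the Euler--Lagrange equations (the multiplier dies after pairing with $g_1$) give $L_+(w_*)_1=0$ and $L_-(w_*)_2=0$, hence $(w_*)_1\in\Span\{W'\}$, $(w_*)_2\in\Span\{W\}$, and then $(w_*|\chi)=0$ together with \eqref{Eqn:ChoiceW} forces $w_*=0$ --- no coercivity on complements and no splitting along $W,W'$ is needed. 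Finally, a small logical slip: negating the inequality only yields $Q(w_n)<1/n$, not $Q(w_n)\to 0$; at that stage you may only use $\limsup_n Q(w_n)\le 0$ (which in fact suffices for your argument), since nonnegativity of $Q$ on the constraint set is exactly what gets proved later.
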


Hence in the subspace $\{v\in\dot H^1\mid (v|\chi)=0\}$, 
we can define an equivalent norm $E$ using the decomposition of Proposition \ref{Prop:DecompSpectv}
\EQ{ \label{Eqn:VeVH1}
 \| v  \|^{2}_{E}  & := \mu \left( \lambda_{1}^{2}  + \lambda_{2}^{2}  \right) + \frac{1}{2}\LR{\mathcal{L} \gamma, \gamma} \sim \la_1^2+\la_2^2+\|\ga\|_{\dot H^1}^2 \sim \|v\|_{\dot H^1}^2. }
In particular, in the decomposition of Proposition \ref{Prop:OrthDecompu}, we have 
\EQ{
 d_\cW(\fy) \sim \|v\|_{\dot H^1} \sim \|v\|_E.}

Henceforth, we assume that whenever a solution $u$ of \eqref{Eqn:SchrodCrit} is in $B_{\de_E}(\cW)$, the coordinates $\si$, $\te$, $v$, $\la_\pm$, $\la_1,\la_2$ and $\ga$ are defined by \eqref{Eqn:Decompu}, \eqref{Eqn:Decompv} and \eqref{Eqn:DecompLambdaOne}, while $\ta(t)$ is a solution of $\dot\ta(t)=e^{2\si}$. In short,
\EQ{ \label{coord around W}
 \pt e^{-i\te}S^{-\si}_{-1}u-W=v=\la_+g_++\la_-g_-+\ga=2\la_1g_1-2i\la_2g_2+\ga,
 \prq 0=(v | \chi)=(\ga | \chi)=\om(g_\pm,\ga)=\LR{g_1,\ga_2}=\LR{g_2,\ga_1},
 \prq \de_E>d_\cW(u) \sim \|v\|_{\dot H^1} \sim \|v\|_E,
 \pq(\te,\si)=(\ti\te(u),\ti\si(u)), \pq \dot\ta=e^{2\si}.}

The next proposition, proved in Section \ref{Section:extend v}, ensures the existence of a solution $u$ of \eqref{Eqn:SchrodCrit} in a neighborhood of $\cW$ as long as the scaling parameter $\si$ is bounded from above. 
\begin{prop}[Uniform local existence in $\ta$] \label{Prop:extend v}
There exists an absolute constant $\de_L\in(0,\de_E/2)$ such that for any solution $u$ of \eqref{Eqn:SchrodCrit} with $d_\cW(u(0))=:\de\in[0,2\de_L]$, we have
$T_\pm(u)>3e^{-2\si(0)}=:T_0$, $\pm(\ta(\pm T_0)-\ta(0))>2$, and for $|t|\le T_0$,
\EQ{
 \de_E>d_\cW(u(t))\sim \de, \pq \si(t)=\si(0)+O(\de).}
\end{prop}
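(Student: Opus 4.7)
The plan is to reduce to $\si(0)=0$ by the scaling symmetry of \eqref{Eqn:SchrodCrit}, apply the energy-critical stability lemma around the stationary radial solution $W$, and finally extract the parameter estimates from the regularity of the decomposition in Proposition \ref{Prop:OrthDecompu}.

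By the scaling symmetry $u(t,x)\mapsto e^{\si_0/2}u(e^{2\si_0}t,e^{\si_0}x)$ of \eqref{Eqn:SchrodCrit}, which leaves $\cW$ invariant and preserves $d_\cW$, together with a constant phase rotation, it suffices to prove the statement in the normalized frame $\si(0)=\te(0)=0$ (so that $T_0=3$) and then rescale. In this frame $u_0$ lies at $\dot H^1$-distance $\de$ from $W$. Since $W(x)$ is a $t$-independent strong solution with finite norm $\|W\|_{L^{10}_{t,x}([-3,3]\times\R^3)}$, the stability/perturbation lemma of \cite{kenmer} produces absolute constants $\de_L\in(0,\de_E/2)$ and $C\ge 1$ such that, for $\de\le 2\de_L$, the solution $u$ exists on $[-3,3]$ and
\EQ{
\sup_{|t|\le T_0}\|u(t)-W\|_{\dot H^1}\le C\de.
}
Choosing $\de_L$ so that $2C\de_L<\de_E$ forces $u(t)\in B_{\de_E}(\cW)$ on $[-T_0,T_0]$, giving $T_\pm(u)>T_0$ and validity of the decomposition \eqref{Eqn:Decompu} throughout; the upper bound $d_\cW(u(t))\lec \de$ is then immediate.

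Because $\ti\si$ from Proposition \ref{Prop:OrthDecompu} is $C^1$ near $W$ with $\ti\si(W)=0$, Lipschitz continuity yields
\EQ{
|\si(t)-\si(0)|=|\ti\si(u(t))-\ti\si(W)|\lec \|u(t)-W\|_{\dot H^1}\lec \de,
}
so $\dot\ta=e^{2\si(t)}\ge 1-C'\de$ on $[-T_0,T_0]$, giving $\pm(\ta(\pm T_0)-\ta(0))\ge 3(1-C'\de)>2$ for $\de_L$ small enough. The same Lipschitz argument for $\ti\te$ gives $|\te(t)|\lec \de$. For the lower bound $d_\cW(u(t))\gec \de$, I fix $t\in[-T_0,T_0]$ and pick $W_{\te',\si'}\in\cW$ almost realizing the infimum in $d_\cW(u(t))$. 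Together with $|\si(t)|+|\te(t)|=O(\de)$, the continuity estimate \eqref{Eqn:Decomp3} applied to $u(t)$ forces $|\si'|+|\te'|=O(\de)$, so $\|W_{\te',\si'}\|_{L^{10}_{t,x}([-3,3]\times\R^3)}$ is uniformly bounded and the stability lemma applies with reference solution $W_{\te',\si'}$. Running it backward from time $t$ to time $0$ then gives
\EQ{
\de=d_\cW(u_0)\le \|u_0-W_{\te',\si'}\|_{\dot H^1}\lec \|u(t)-W_{\te',\si'}\|_{\dot H^1}=d_\cW(u(t)).
}

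The only subtle point is keeping the stability constants uniform across the reference solutions in this second application, which I resolve by first establishing $|\si'|+|\te'|=O(\de)$ via Proposition \ref{Prop:OrthDecompu} and only then invoking stability against $W_{\te',\si'}$. Undoing the scaling reduction at the end converts all obtained bounds into the statements for general $\si(0)$.
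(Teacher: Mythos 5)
Your proposal is correct and follows essentially the same route as the paper: reduce to $\te(0)=\si(0)=0$ by rotation and scaling, use the critical perturbation theory around the global solution $W$ (with $\|W\|_{L^{10}_{t,x}([-3,3]\times\R^3)}<\I$) to get existence and the upper bound $d_\cW(u(t))\lec\de$ on $|t|\le 3$, obtain the lower bound by solving backward against the soliton nearest to $u(t)$, and then control $\si$ and the increment of $\ta$. The only (harmless) deviation is that you get $\si(t)-\si(0)=O(\de)$ from the Lipschitz estimate \eqref{Eqn:Decomp3} for $\ti\si$, whereas the paper integrates the modulation equation $\dot\si=e^{2\si}O(\de)$ from Proposition \ref{Prop:LinearParamW}; both arguments are valid.
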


Now we are ready to define the nonlinear distance $\ti d_\cW$. Let $\fy \in B_{\delta_{E}} (\cW)$. Consider the decomposition
\eqref{Eqn:Decomp1} of $\fy$. Then we define a local distance $d_0:B_{\de_E}(\cW)\to[0,\I)$ by
\EQ{ \label{def d0}
 d_{0}(\fy)^2 := E(\fy) - E(W) + 2\mu \la_1^2 .}

As observed in \cite{nakaschlagschrod}, this is close to be convex in $\ta$ when the solution is ejected out of a small neighborhood of $\cW$, but it may have small oscillation around minima in $\ta$.
This is a difference for the Schr\"odinger equation from the Klein-Gordon equation, for which $d_0^2$ is strictly convex (see \cite{nakschlagkg}).
We could treat the possible oscillation as in \cite{nakaschlagschrod} by waiting for a short time before the exponential instability dominates, which would however bring a certain amount of complication to the statements as well as the proof.

Here instead, we introduce a dynamical mollification of $d_0^2$, which yields a strictly convex function in $\ta$.
The same argument works in the subcritical setting as in \cite{nakaschlagschrod}.
Let $u$ be the solution of \eqref{Eqn:SchrodCrit} with  initial data $ u(0) := \fy \in B_{2\de_L}(\cW)$.
Then Proposition \ref{Prop:extend v} ensures that $u$ exists at least for $|\ta-\ta(0)|\le 2$ in $B_{\de_E}(\cW)$.
Using the decomposition \eqref{coord around W} with $\tau(0):=0$, let
\EQ{ \label{def d1}
 d_1(\fy)^2 := \int_\R \phi(\ta)d_0(u)^2 d\ta,}
where $\phi$ is the cut-off function in \eqref{def phi}. 
This defines the function $d_1:B_{2\de_L}(\cW)\to[0,\I)$.
Then, we define the nonlinear distance function $\ti d_\cW:\dot H^1\rad\to[0,\I)$ by
\EQ{ \label{Eqn:NonlinearDistDef}
  \tilde{d}_{\cW}(\fy) := \phi_{\de_L}(d_{\cW}(\fy))  d_1(\fy) + \phi^C_{\de_L}(d_{\cW} (\fy)) d_{\cW}(\fy).}
The following proposition, proved in Section \ref{Section:NonlinearDist}, gives the main static properties of the distance function.

\begin{prop}[Nonlinear distance function] \label{Prop:DistFuncEigen}
The functional $\ti d_\cW$ on $\dot H^1\rad$ is invariant for the rotation and scaling, and equivalent to $d_\cW$.
Precisely, there exists an absolute constant $C\in(1,\I)$ such that for all $\fy\in\dot H^1\rad$ and $(\al,\be)\in\R^2$,
\EQ{
 d_\cW(\fy)/C \le \ti d_\cW(\fy)=\ti d_\cW(e^{i\al}S_{-1}^\be\fy) \le Cd_\cW(\fy).}
Moreover, there exists an absolute constant $c_D\in(0,1)$ such that putting
\EQ{ \label{def ticH}
 \ck\cH := \{ \fy \in \dot H^1\rad \mid E(\fy) < E(W) + (c_D\tilde{d}_{\cW}(\fy))^2\}}
we have
\EQ{ \label{Eqn:DomEigenMode}
 \fy \in  B_{\de_L}(\cW) \cap \ck\cH \implies \tilde{d}_{\cW}(\fy)  \sim  |\lambda_1|.}
\end{prop}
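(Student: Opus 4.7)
The proof decomposes naturally into the three assertions: invariance of $\ti d_\cW$ under the symmetry group, the global equivalence $\ti d_\cW\sim d_\cW$, and the sharper eigenvalue-dominated equivalence on $\ck\cH\cap B_{\de_L}(\cW)$. The common backbone is the quadratic Taylor expansion of the energy around $W$, combined with Proposition \ref{Prop:extend v} to pass from pointwise to time-averaged information.

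For invariance, apply $\fy\mapsto e^{i\al}S_{-1}^\be\fy$ and observe that the corresponding solution is $u'(t',x)=e^{i\al}e^{\be/2}u(e^{2\be}t',e^\be x)$. Uniqueness in Proposition \ref{Prop:OrthDecompu} forces $(\ti\te,\ti\si)\mapsto(\ti\te+\al,\ti\si+\be)$ and leaves $v(t)\in\dot H^1$ unchanged as a function (the spatial rescaling is absorbed by the shift of $\si$), so in particular $\la_1,\la_2,\ga$ are unchanged. A direct check of $d\ta'/dt'=e^{2\si'(t')}=e^{2\be}e^{2\si(t)}$ gives $\ta'=\ta$ with the initial normalization $\ta(0)=\ta'(0)=0$, so $E$-invariance yields $d_0(u'(\ta'))=d_0(u(\ta))$ and hence $d_1(\fy')=d_1(\fy)$. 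Together with the obvious invariance of $d_\cW$, the formula \eqref{Eqn:NonlinearDistDef} is invariant.

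For $\ti d_\cW\sim d_\cW$, since $\ti d_\cW$ is a convex combination of $d_1$ and $d_\cW$ and coincides with $d_\cW$ outside $B_{2\de_L}(\cW)$, it is enough to prove $d_1(\fy)\sim d_\cW(\fy)$ for $\fy\in B_{2\de_L}(\cW)$. Using $E'(W)=0$, the spectral decomposition \eqref{Eqn:DecompLambdaOne}, the orthogonalities $\LR{g_2,\ga_1}=\LR{g_1,\ga_2}=0$, together with $L_+g_1=-\mu g_2$, $L_-g_2=\mu g_1$ and $2\LR{g_1,g_2}=1$, a direct computation of $\LR{\cL v,v}$ gives
\[
E(W+v)-E(W)=-\mu\la_1^2+\mu\la_2^2+\tfrac{1}{2}\LR{\cL\ga,\ga}+O(\|v\|_{\dot H^1}^3),
\]
so that $d_0(u)^2=\mu\la_1^2+\mu\la_2^2+\tfrac{1}{2}\LR{\cL\ga,\ga}+O(d_\cW(u)^3)=\|v\|_E^2+O(d_\cW(u)^3)\sim d_\cW(u)^2$ by \eqref{Eqn:VeVH1}, provided $\de_L$ is small. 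Proposition \ref{Prop:extend v} gives $d_\cW(u(\ta))\sim d_\cW(\fy)$ on $|\ta|\le 2$, so integrating against the positive weight $\phi$ yields $d_1(\fy)^2\sim d_\cW(\fy)^2$.

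For the final claim, on $B_{\de_L}(\cW)$ we have $\ti d_\cW(\fy)=d_1(\fy)\sim d_\cW(\fy)$, so it suffices to prove $d_\cW(\fy)\sim|\la_1(\fy)|$. The bound $|\la_1|\lec\|v\|_E\lec d_\cW$ is immediate from \eqref{Eqn:VeVH1}. For the reverse inequality, combine $E(\fy)-E(W)<c_D^2\ti d_\cW(\fy)^2\lec c_D^2 d_\cW(\fy)^2$ with the expansion above to obtain
\[
\mu\la_2^2+\tfrac{1}{2}\LR{\cL\ga,\ga}<\mu\la_1^2+Cc_D^2 d_\cW(\fy)^2+O(d_\cW(\fy)^3),
\]
and add $\mu\la_1^2$ to both sides to get $\|v\|_E^2\lec\la_1^2+c_D^2 d_\cW^2+d_\cW^3$. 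Since $\|v\|_E^2\sim d_\cW^2$, choosing first $c_D$ small relative to the constants in $d_1\sim d_\cW$ and $\|v\|_E\sim d_\cW$, and then $\de_L$ small enough to render the cubic remainder negligible, absorbs both error terms into the left-hand side, yielding $d_\cW\lec|\la_1|$. The main obstacle is precisely this bookkeeping: the order of smallness $c_D\ll 1$, then $\de_L\ll c_D$, must be respected, and one must verify that the constants in $d_1\sim d_\cW$ (which depend only on the Taylor expansion and Proposition \ref{Prop:extend v}) do not themselves depend on $c_D$.
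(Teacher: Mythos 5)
Your proposal is correct and follows essentially the same route as the paper: the quadratic expansion of $E$ around $W$ combined with Proposition \ref{Prop:ControlOrth} (i.e.\ the equivalence \eqref{Eqn:VeVH1}) to get $d_0\sim d_\cW$, Proposition \ref{Prop:extend v} to pass to $d_1$, uniqueness in Proposition \ref{Prop:OrthDecompu} plus the explicit scaling of the flow for the invariance, and absorption of the $c_D^2$ and cubic error terms for \eqref{Eqn:DomEigenMode}. The only cosmetic difference is that the paper deduces \eqref{Eqn:DomEigenMode} directly from $\tilde d_\cW^2\sim d_0^2=E(\fy)-E(W)+2\mu\la_1^2$ rather than re-expanding the energy, which is the same computation.
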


Hence we can use $\ti d_\cW(\fy)$ to measure the distance to $\cW$, instead of the standard $d_\cW(\fy)$. The $\de$ neighborhood with respect to this distance function is denoted by
\EQ{
 \ti B_\de(\cW):=\{\fy \in \dot H^1\rad \mid \ti d_\cW(\fy)<\de\}.}

The next proposition, proved in Section \ref{Section:Ejection}, describes the dynamics close to the ground states in the ejection mode:

\begin{prop}[Dynamics in the ejection mode] \label{Prop:DynEjection}
There is an absolute constant $\de_X\in(0,1)$ such that $\ti B_{\de_X} (\cW) \subset B_{\de_L} (\cW)$, and that for any solution $u$ of \eqref{Eqn:SchrodCrit} with
\EQ{ \label{Eqn:EjecScplus}
 u(t_0) \in \ti B_{\de_X} (\cW) \cap\ck\cH \pq \text{and} \pq \p_t\ti d_\cW(u(t_0)) \ge 0,}
at some $t_0\in I(u)$, we have the following. $\ti d_\cW(u(t))$ is increasing until it reaches $\de_X$ at some $t_X\in(t_0,T_+(u))$. For all $t\in[t_0,t_X]$, we have
\begin{align}
 \pt \tilde{d}_{\cW} (u(t)) \sim |\lambda_{1}(t)| \sim e^{\mu(\ta(t)-\ta(t_0))}\ti d_\cW(u(t_0)),
   \label{Eqn:Dyndq}
 \pr \| \gamma (t) \|_{\dot{H}^{1}} \lec \ti d_\cW(u(t_0)) + \ti d_\cW(u(t))^2,
   \label{Eqn:Dynvperp}
 \pr |(e^{i\te(t)}-e^{i\te(t_0)}, \sigma(t) - \sigma(t_0)) |\lec \ti d_\cW(u(t)),
   \label{Eqn:Dyntesi}
\end{align}
$\sign(\lambda_{1}(t))$ is constant, and there exists an absolute constant $C_K>0$ such that
\EQ{ \label{Eqn:DynK}
 -\sign(\lambda_{1}(t)) K(u(t)) & \gtrsim (e^{\mu(\ta(t)-\ta(t_0))} - C_K )\ti d_\cW(u(t_0)).}
\end{prop}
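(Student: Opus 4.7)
The plan is to work on the maximal interval $I_* = [t_0, t_X]$ on which $u(t) \in \ti B_{\de_X}(\cW)$, choosing $\de_X$ small enough that $\ti B_{\de_X}(\cW) \subset B_{\de_L}(\cW)$, so the coordinates \eqref{coord around W} are available. A continuity argument will keep $u(t) \in \ck\cH$ throughout $I_*$, so Proposition \ref{Prop:DistFuncEigen} gives $\ti d_\cW(u(t)) \sim |\la_1(t)|$ there. The core dynamical ingredient is the linearized ODE from Proposition \ref{Prop:LinearParamW}: after normalizing $\tilde\la_\pm := e^{\mp\mu(\ta-\ta(t_0))}\la_\pm$, one finds $\p_\ta \tilde\la_\pm = e^{\mp\mu(\ta-\ta(t_0))} O(\|v\|_{\dot H^1}^2)$, so these rescaled modes are almost conserved, provided the quadratic remainder is controllable.

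Control of the remainder comes from the conserved energy. Since $-\De W = W^5$ kills the linear term, Taylor expansion yields $E(u) - E(W) = -\mu\la_+\la_- + \tfrac{1}{2}\LR{\cL\ga,\ga} + O(\|v\|_{\dot H^1}^3)$, where I have rewritten $-\mu\la_1^2 + \mu\la_2^2 = -\mu\la_+\la_-$. This cancellation is crucial: although $\la_+$ grows and $\la_-$ decays, their product stays $\lec |\la_+(t_0)\la_-(t_0)| \lec \ti d_\cW(u(t_0))^2$. Combined with the coercivity $\LR{\cL\ga,\ga} \sim \|\ga\|_{\dot H^1}^2$ from Proposition \ref{Prop:ControlOrth} (using $(\ga|\chi) = 0$) and $E(u(t_0))-E(W) = O(\ti d_\cW(u(t_0))^2)$, this yields $\|\ga\|_{\dot H^1}^2 \lec \ti d_\cW(u(t_0))^2 + O(\|v\|^3)$. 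Absorbing the cubic error (possible since $\|v\| \lec \de_X \ll 1$) produces $\|v\|_{\dot H^1}^2 \lec |\la_+|^2 + \ti d_\cW(u(t_0))^2$; feeding this back into the ODE for $\tilde\la_\pm$ and choosing $\de_X$ small enough closes the bootstrap and establishes $|\la_+(\ta)| \sim e^{\mu(\ta-\ta(t_0))}|\la_+(\ta(t_0))|$ while $|\la_-(\ta)|$ decays at the reciprocal rate.

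The ejection condition $\p_t\ti d_\cW(u(t_0)) \ge 0$ amounts (via $\p_\ta d_0^2 = 4\mu^2 \la_1\la_2 + O(\|v\|^3) = \mu^2(\la_+^2 - \la_-^2) + O(\|v\|^3)$ and the mollification \eqref{def d1}) to $|\la_+(t_0)| \ge |\la_-(t_0)|$ up to cubic errors. A case analysis on the four sign combinations of $(\la_+,\la_-)$ then shows $\sign(\la_1(t_0)) = \sign(\la_+(t_0))$ and $|\la_+(t_0)| \sim \ti d_\cW(u(t_0))$. Together with the exponential growth above, these give \eqref{Eqn:Dyndq} and the constancy of $\sign(\la_1)$ (determined by $\sign(\la_+)$, which cannot flip under its ODE). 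Integrating $\p_\ta(\te,\si) = O(\|v\|_{\dot H^1})$ from Proposition \ref{Prop:LinearParamW} against the exponentially growing $\|v\|$ (geometric series summing to $\|v(\ta)\|/\mu$) gives \eqref{Eqn:Dyntesi}.

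For \eqref{Eqn:DynK}, Taylor expansion of $K(u) = K(W+v)$ (scale-rotation invariant) and the formula $\LR{K'(W), v} = -4\int W^5 v_1$ combine with the identity $\int W^5 g_1 = \tfrac{\mu}{4}\LR{W, g_2}$ (derived from $L_+ W = -4W^5$, self-adjointness of $L_+$, and $L_+ g_1 = -\mu g_2$) to yield $K(u) = -2\mu\LR{W,g_2}\la_1 + O(\|\ga\|_{\dot H^1} + \|v\|_{\dot H^1}^2)$; since $\LR{W,g_2}>0$ by \eqref{Eqn:CondOmega1}, substituting the exponential growth of $|\la_1|$ against $\|\ga\| + \|v\|^2 = O(\ti d_\cW(u(t_0)))$ gives the bound. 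The main obstacle is the circular nature of the bootstrap—bounding $\ga$ requires smallness of $\|v\|$, and exponential growth of $\la_+$ requires smallness of $\ga$—which is broken by the cancellation identity $\la_1^2 - \la_2^2 = \la_+\la_-$ in the energy: this decouples the control of $\|\ga\|$ from the exponentially growing unstable mode, bounding it purely in terms of initial data.
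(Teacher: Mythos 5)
Your overall architecture (modulation coordinates, the hyperbolic ODE for $\la_\pm$, energy expansion plus coercivity of $\cL$ on $\{\LR{\ga_1,g_2}=0,\ (\ga|\chi)=0\}$, bootstrap, and the expansion of $K$) matches the paper, but the way you control $\ga$ does not deliver the stated estimate \eqref{Eqn:Dynvperp}. Using the static expansion \eqref{Eqn:ExpEnergy} with the crude remainder $C(v)=O(\|v\|_{\dot H^1}^3)$ gives only $\|\ga\|_{\dot H^1}\lec \ti d_\cW(u(t_0))+\ti d_\cW(u(t))^{3/2}$, not the exponent $2$ claimed in \eqref{Eqn:Dynvperp}. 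Moreover your key intermediate claim that $|\la_+\la_-|$ ``stays $\lec|\la_+(t_0)\la_-(t_0)|$'' is false for the perturbed system: the $O(\|v\|_{\dot H^1}^2)$ forcing in \eqref{Eqn:EstDynLambda} allows $|\la_-|$ to grow to size $\ti d_\cW(u(t))^2$, so the product can reach $\ti d_\cW(u(t))^3$ (harmless only because it is of the same size as the cubic error you already carry). The paper obtains the exponent $2$ by a genuinely finer mechanism that your proposal is missing: it subtracts the modified energy $E_{\ga^\perp}$ of \eqref{Eqn:ExpEnergyGamma}, bounds the $\ta$-derivative of $E(u)-E(W)-E_{\ga^\perp}(u)$ through the modulation equations (estimate \eqref{Eqn:EstDerEuEgamma}), integrates in $\ta$, and compares pointwise via \eqref{Eqn:EstPointWiseNrj}, where the cubic term is split as $C(v)-C(\la_+g_++\la_-g_-)=O(\la_1^2\|\ga\|_{\dot H^1})$ so it can be absorbed. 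Relatedly, your final assertion $\|\ga\|_{\dot H^1}+\|v\|_{\dot H^1}^2=O(\ti d_\cW(u(t_0)))$ is false when $\ti d_\cW(u(t))\sim\de_X\gg\ti d_\cW(u(t_0))$; the conclusion \eqref{Eqn:DynK} still holds, but only after absorbing the error $\ti d_\cW(u(t))^2\le\de_X\,\ti d_\cW(u(t))$ into the main term $\sim|\la_1(t)|$ for $\de_X$ small, an absorption you do not perform.

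A second gap: the proposition asserts that $\ti d_\cW(u(t))$ is \emph{increasing} until it reaches $\de_X$ at some $t_X<T_+(u)$, and your proposal never proves this; the equivalence \eqref{Eqn:Dyndq} only gives growth up to multiplicative constants. In the paper this monotonicity is precisely what the mollified distance \eqref{def d1} was built for: one computes
\EQ{
 \p_\ta^2\,\ti d_\cW(u)^2=\phi*\bigl[4\mu^3(\la_1^2+\la_2^2)+O(\la_1^3)\bigr]+\phi'*O(\la_1^3)\ \sim\ \la_1^2>0,}
so $\ti d_\cW(u)^2$ is strictly convex in $\ta$, and $\p_t\ti d_\cW(u(t_0))\ge0$ forces strict increase; the resulting exponential lower bound also shows the exit happens in finite $\ta$, and Proposition \ref{Prop:extend v} converts this into $t_X<T_+(u)$. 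You invoke the mollification only to read the ejection condition as ``$|\la_+(t_0)|\ge|\la_-(t_0)|$ up to cubic errors''; note in addition that since $\p_t\ti d_\cW$ is a unit-window $\ta$-average of $\p_\ta d_0^2$, that sign information is legitimately obtained only at some nearby time $\ta_1\in(\ta_0,\ta_0+2)$, and one must add that $|\la_1|\sim\ti d_\cW(u(t_0))$ on $[\ta_0,\ta_1]$ before starting the bootstrap there. These points are repairable, but as written the proposal establishes neither the monotonicity statement nor \eqref{Eqn:Dynvperp}.
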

\begin{rem}
By time-reversal symmetry\footnote{If $u(t,x)$ is a solution of \eqref{Eqn:SchrodCrit}, then $\overline{u(-t,x)}$ is also a solution of \eqref{Eqn:SchrodCrit}.}, a similar result holds in the negative time direction, where the last condition of \eqref{Eqn:EjecScplus} is replaced with $\p_t\ti d_\cW(u(t))\le 0$.
\end{rem}

The next proposition gives a variational estimate away from the ground states, and it is a consequence of \eqref{Eqn:Charac2GdState}, cf.~\cite{nakschlagwave,nakschlagbook}.
\begin{prop}[Variational estimates] \label{Prop:VarEst}
There exist two increasing functions $\ep_V$ and $\ka$ from $(0,\I)$ to $(0,1)$,  and an absolute constant $c_V>0$, such that for any
$ \fy \in \dot H^1\rad$ satisfying $E(\fy)< E(W)+\ep_V(\ti d_\cW(\fy))^2$, we have
\EQ{
 K(\fy) \ge \min(\ka(\ti d_\cW(\fy)), c_V \|\nabla \fy \|^{2}_{L^{2}})
 \text{ or }
 K(u) & \leq -\ka(\ti d_\cW(\fy)).}
\end{prop}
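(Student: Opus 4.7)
My plan is to argue by contradiction, reducing the dichotomy to the rigidity of the Sobolev inequality \eqref{Eqn:SobIneqMax}, in the spirit of the analogous variational estimates in \cite{nakschlagwave,nakschlagbook}. The constant $c_V\in(0,1)$ will be chosen as an absolute small positive number (say $c_V=1/2$).

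Fix $\de_0>0$ and assume for contradiction that there exist sequences $\ep_n,\ka_n\downarrow 0$ and $\fy_n\in\dot H^1\rad$ with $\ti d_\cW(\fy_n)\ge\de_0$, $E(\fy_n)<E(W)+\ep_n^2$, and
\[
 -\ka_n<K(\fy_n)<\min(\ka_n,\,c_V\|\nabla\fy_n\|_{L^2}^2).
\]
Using the identities \eqref{Eqn:DefI} and \eqref{Eqn:RelNrjK} I get $\|\nabla\fy_n\|_{L^2}^2=3E(\fy_n)-K(\fy_n)/2\to 3E(W)=\|\nabla W\|_{L^2}^2$ and $\|\fy_n\|_{L^6}^6=3E(\fy_n)-3K(\fy_n)/2\to 3E(W)=\|W\|_{L^6}^6$, so $\fy_n$ is a maximizing sequence for the Sobolev quotient in \eqref{Eqn:SobIneqMax}. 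The constraint $K(\fy_n)<c_V\|\nabla\fy_n\|_{L^2}^2$ combined with \eqref{Eqn:SobIneqMax} gives the lower bound $\|\nabla\fy_n\|_{L^2}\gtrsim 1$ and rules out collapse to zero.

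The key step is Aubin--Talenti rigidity \cite{aubin,tal}: on $\dot H^1\rad$ the only loss of compactness for \eqref{Eqn:SobIneqMax} comes from $\dot H^1$ scaling, and the only extremizers are the ground states in $\cW$. Hence, after rescaling by some $\si_n\in\R$ and multiplication by a phase $e^{-i\te_n}$, the sequence $e^{-i\te_n}S^{-\si_n}_{-1}\fy_n$ converges strongly in $\dot H^1$ to $W$. This forces $d_\cW(\fy_n)\to 0$, and by the equivalence $d_\cW\sim\ti d_\cW$ of Proposition \ref{Prop:DistFuncEigen} also $\ti d_\cW(\fy_n)\to 0$, contradicting $\ti d_\cW(\fy_n)\ge\de_0$. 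This closes the contradiction.

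Running the argument for every $\de_0>0$ yields pointwise thresholds $\ep_V^*(\de_0),\ka^*(\de_0)\in(0,1)$; I then set $\ep_V(d):=\inf_{d'\ge d}\ep_V^*(d')$ and $\ka(d):=\inf_{d'\ge d}\ka^*(d')$, which are increasing functions into $(0,1)$ and give only stronger hypotheses and weaker conclusions, hence still valid. The main obstacle is the invocation of Sobolev rigidity; once that input is in hand, the remainder is just the algebraic manipulation of $E,K,G,I$ together with the distance equivalence from Proposition \ref{Prop:DistFuncEigen}.
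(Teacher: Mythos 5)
Your overall strategy is sound and is essentially the one the paper intends (the paper gives no detailed proof, saying only that the statement follows from \eqref{Eqn:Charac2GdState}, cf.\ the cited references): argue by contradiction, show the sequence is almost-extremizing for the sharp Sobolev inequality \eqref{Eqn:SobIneqMax}, and conclude proximity to $\cW$ modulo scaling and phase, contradicting $\ti d_\cW(\fy_n)\ge\de_0$ via the equivalence of Proposition \ref{Prop:DistFuncEigen}. However, as written there is a genuine gap at the first step: the claim $\|\na\fy_n\|_{L^2}^2=3E(\fy_n)-K(\fy_n)/2\to 3E(W)$ does \emph{not} follow "from the identities", because the hypothesis gives only the upper bound $E(\fy_n)<E(W)+\ep_n^2$; a priori $E(\fy_n)$ could sit well below $E(W)$ even with $K(\fy_n)\to0$ and $\|\na\fy_n\|\gec 1$. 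The missing lower bound $E(\fy_n)\ge E(W)-o(1)$ is precisely where \eqref{Eqn:Charac2GdState} (equivalently the sharp constant in \eqref{Eqn:SobIneqMax}) must enter: e.g.\ rescale $\la_n\fy_n$ with $\la_n\to1$ so that $K(\la_n\fy_n)=0$ (possible since $K(\fy_n)\to0$ while $\|\na\fy_n\|\gec1$) and apply \eqref{Eqn:Charac2GdState}, or run the quantitative computation $\|\na\fy_n\|_{L^2}^2-\ka_n\le\|\fy_n\|_{L^6}^6\le\|\na\fy_n\|_{L^2}^6/\|\na W\|_{L^2}^4$. Only after this do you know $\|\na\fy_n\|_{L^2}^2\to\|\na W\|_{L^2}^2$ and $\|\fy_n\|_{L^6}^6\to\|W\|_{L^6}^6$, hence that the Sobolev quotient is asymptotically extremal. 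Your proposal skips exactly the variational input the proposition rests on.

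Second, the "key step" you call Aubin--Talenti rigidity is misattributed and understated: Aubin and Talenti compute the sharp constant and identify the extremizers, but they do not give compactness of almost-extremizing sequences. What you need is that a radial sequence with Sobolev quotient tending to the sharp constant and $\|\na\fy_n\|_{L^2}\to\|\na W\|_{L^2}$ converges, after rescaling $S_{-1}^{-\si_n}$ and a phase, strongly in $\dot H^1$ to $W$; this requires Lions-type concentration compactness/Brezis--Lieb or a profile decomposition (in the radial class only scaling survives), or alternatively the Bianchi--Egnell stability estimate, together with the remark that for complex-valued almost-extremizers the phase must asymptotically be constant so that the limit indeed lies in $\cW$. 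This is standard and available in the literature the paper itself cites (Lions, Keraani, Bahouri--G\'erard), but it must be invoked correctly, since it is the crux of the argument. Finally, a small repair in the bookkeeping: defining $\ep_V(d):=\inf_{d'\ge d}\ep_V^*(d')$ risks an infimum equal to $0$; instead observe that constants valid for a given $\de_0$ remain valid for all larger distances, so the thresholds may be chosen nondecreasing from the start and then made strictly increasing with values in $(0,1)$ by an elementary modification.
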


The next proposition, proved in Section \ref{Section:Sign}, defines a functional $\Te$ that decides the fate of the solution around $t=T_\pm(u)$, as well as at the exit time $t=t_X$ in the above proposition.

\begin{prop}[Sign functional] \label{Prop:SignProp}
There exist an absolute constant $\ep_S\in(0,1)$ and a continuous function $\Te:\cH^{\ep_S}\cap\ck\cH\to\{\pm 1\}$, such that for some $0<\de_1<\de_2<\de_X$ and for any $\fy \in \cH^{\ep_S}\cap\ck\cH$, with the convention $\sign 0=+1$,
\EQ{ \label{def Te}
  \CAS{\ti d_\cW(\fy)\ge\de_1 \implies \Te(\fy)=\sign K(\fy),\\
   \ti d_\cW(\fy)\le\de_2 \implies \Te(\fy)=-\sign \la_1,}}
and $\Te(e^{i\al}S_{-1}^\be \fy)=\Te(\fy)$ for all $(\al,\be)\in\R^2$. Moreover, if $E(\fy)<E(W)$ then $\Te(\fy)=\sign K(\fy)$.
\end{prop}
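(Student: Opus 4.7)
I would build $\Te$ by patching two natural sign functions on overlapping regions. For some $0<\de_1<\de_2<\de_L$ and sufficiently small $\ep_S$, define $\Te(\fy):=\sign K(\fy)$ when $\ti d_\cW(\fy)\ge\de_1$ and $\Te(\fy):=-\sign\la_1(\fy)$ when $\ti d_\cW(\fy)\le\de_2$. On the far side, I would choose $\ep_S\le\ep_V(\de_1)$ with $\ep_V$ from Proposition~\ref{Prop:VarEst}: then $\fy\in\cH^{\ep_S}\cap\ck\cH$ with $\ti d_\cW(\fy)\ge\de_1$ satisfies $E(\fy)-E(W)<\ep_V(\ti d_\cW(\fy))^2$, and Proposition~\ref{Prop:VarEst} yields $|K(\fy)|\ge\ka(\de_1)>0$ with a locally constant sign. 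On the near side, with $\de_2<\de_L$, Proposition~\ref{Prop:DistFuncEigen} gives $\ti d_\cW\sim|\la_1|$ on $\ck\cH\cap B_{\de_L}(\cW)$, so $\la_1\ne 0$ throughout $\ck\cH\cap\ti B_{\de_2}(\cW)$ and $-\sign\la_1$ is continuous and $\pm 1$-valued.

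\textbf{Compatibility on the overlap.} The central step is to verify $\sign K=-\sign\la_1$ on $\ck\cH\cap\ti B_{\de_2}(\cW)$. Taylor-expanding $K$ around $W$ with $K(W)=0$ and $-\De W=W^5$ gives
\EQ{
 K(W+v) = -4\LR{W^5,v_1} + O(\|v\|_{\dot H^1}^2).
}
From $L_+W=-4W^5$, $L_+g_1=-\mu g_2$ and self-adjointness one obtains $\LR{W^5,g_1}=(\mu/4)\LR{W,g_2}$, and by~\eqref{Eqn:CondOmega1} the constant $c_0:=\LR{W,g_2}$ is strictly positive. Decomposing $v_1=2\la_1 g_1+\ga_1$ then yields
\EQ{
 K(W+v) = -2\mu c_0\la_1 - 4\LR{W^5,\ga_1} + O(\|v\|_{\dot H^1}^2).
}
Meanwhile the energy expansion $E(\fy)-E(W)=-2\mu\la_1^2+\|v\|_E^2+O(\|v\|_{\dot H^1}^3)$ together with the $\ck\cH$ constraint $E(\fy)-E(W)<c_D^2\ti d_\cW^2\lec c_D^2\|v\|_E^2$ forces $\|v\|_E^2\le C\la_1^2$ for small $\|v\|$, and Proposition~\ref{Prop:ControlOrth} then produces $\|\ga\|_{\dot H^1}+|\la_2|\lec|\la_1|$ with a coefficient governed by $c_D$. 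Hence $|\LR{W^5,\ga_1}|\lec\|\ga\|_{\dot H^1}\lec|\la_1|$, and the quadratic remainder is bounded by $C\|v\|^2\lec\de_2|\la_1|$. For $c_D$ sufficiently small in Proposition~\ref{Prop:DistFuncEigen} and $\de_2$ sufficiently small, the main term $-2\mu c_0\la_1$ dominates both corrections, so $K\ne 0$ and $\sign K=-\sign\la_1$ on the whole near-field region, matching the far-field definition on the overlap.

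\textbf{Conclusion and principal obstacle.} Continuity of $\Te$ then follows from continuity of each piece on its domain together with the agreement on $\de_1\le\ti d_\cW\le\de_2$. The invariance $\Te(e^{i\al}S^\be_{-1}\fy)=\Te(\fy)$ is inherited from the covariance of the decomposition~\eqref{coord around W}, which shifts only $(\te,\si)\mapsto(\te+\al,\si+\be)$ and leaves $v,\la_1$ unchanged, combined with the rotation and $\dot H^1$-scaling invariance of $K$. The moreover clause is then automatic: if $E(\fy)<E(W)$ then $\fy\in\cH^{\ep_S}\cap\ck\cH$, and $\Te=\sign K$ holds on the far piece by construction and on the near piece by the compatibility identity. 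The principal difficulty is the compatibility step: the $\ck\cH$ condition delivers only a fixed-constant bound $\|\ga\|_{\dot H^1}\le C_1|\la_1|$, and one must tune $c_D$ in Proposition~\ref{Prop:DistFuncEigen} and $\de_2$ here so that $C_1$ becomes strictly smaller than $\mu c_0$ divided by the operator norm controlling $|\LR{W^5,\cdot}|$ on $\dot H^1$, in order to beat the nonspectral contribution $\LR{W^5,\ga_1}$ in the Taylor expansion of $K$.
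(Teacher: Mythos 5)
Your overall architecture (define $\Te$ by the two formulas of \eqref{def Te}, verify that they agree where both apply, and inherit the invariance from that of $K$ and $\la_1$) coincides with the paper's, but the central compatibility step has a genuine gap. From the $\ck\cH$ constraint and the expansion \eqref{Eqn:ExpEnergy} you can only extract a bound of the form $\|\ga\|_{\dot H^1}\le C_1|\la_1|+O(\|v\|_{\dot H^1}^{3/2})$ where $C_1$ is an absolute constant fixed by the coercivity of $\LR{\cL\ga,\ga}\sim\|\ga\|_{\dot H^1}^2$ on the orthogonal subspace (Proposition \ref{Prop:ControlOrth}); it is not ``governed by $c_D$''. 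Even in the limiting case $E(\fy)\le E(W)$ (formally $c_D=0$) one only gets $\mu\la_2^2+\tfrac12\LR{\cL\ga,\ga}\le\mu\la_1^2+O(\|v\|_{\dot H^1}^3)$, hence $\|\ga\|_{\dot H^1}\lec\sqrt{\mu/c_\cL}\,|\la_1|$ with $c_\cL$ the coercivity constant, and there is no reason why $\|W^5\|_{\dot H^{-1}}\sqrt{\mu/c_\cL}$ should be smaller than $\mu\LR{W,g_2}$. Shrinking $\de_2$ only controls the quadratic remainder, not the ratio $\|\ga\|_{\dot H^1}/|\la_1|$, and shrinking $c_D$ does not help either (besides, $c_D$ is fixed in Proposition \ref{Prop:DistFuncEigen} and used throughout the rest of the paper, e.g.\ in the one-pass lemma and in Theorem \ref{Thm:Main}). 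So the tuning you invoke in your last paragraph is not available, and the static pointwise Taylor argument at a general point of $\ck\cH\cap\ti B_{\de_2}(\cW)$ does not show $\sign K=-\sign\la_1$.

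The paper circumvents precisely this obstruction by a dynamical argument: the identity is only needed on the overlap $Y=\{\de_1\le\ti d_\cW\le\de_2\}$, and for data in $Y$ one applies the ejection lemma (Proposition \ref{Prop:DynEjection}), forward or backward in time, until $\ti d_\cW$ reaches $\de_X$. Along that trajectory $\ti d_\cW\in[\de_1,\de_X]$, so $\sign K$ is constant by Proposition \ref{Prop:VarEst} and $\sign\la_1$ is constant by \eqref{Eqn:DomEigenMode}; at the exit time the eigenmode has grown by a factor $\sim\de_X/\de_2\gg1$ while $\|\ga\|_{\dot H^1}\lec\ti d_\cW(u(t_0))+\ti d_\cW(u(t))^2$ by \eqref{Eqn:Dynvperp}, so \eqref{Eqn:DynK} (this is where $\de_2\ll\de_X$ enters, and why the statement requires $\de_2<\de_X$, not merely $\de_2<\de_L$) gives $-\sign\la_1=\sign K$ there, and the relation is transported back to the initial point. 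Your ``moreover'' clause stands or falls with the compatibility step; the paper instead proves it by connecting any $\fy$ with $E(\fy)<E(W)$ to reference points along the curves $\la\mapsto\la\fy$ and using continuity of $\Te$ together with $K\neq0$ on $\cH^0\setminus\{0\}$ from \eqref{Eqn:Charac2GdState}. If you wish to salvage a static formulation, you would have to restrict the near-field definition to configurations where the eigenmode already dominates $\ga$ quantitatively, which is exactly what flowing out to the scale $\de_X$ accomplishes.
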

Note that the region $\{\fy\in\cH^{\ep_S}\cap\ck\cH\mid \Te(\fy)=+1\}$ is bounded in $\dot H^1$, because
\EQ{ \label{unif bd Te+}
 \CAS{K(\fy)\ge 0 \implies \|\na \fy\|_2^2\le 3E(\fy)\le 3(E(W)+\ep_S^2),
 \\ \ti d_\cW(\fy)\le \de_X \implies \|\fy\|_{\dot H^1}\le \|W\|_{\dot H^1}+C\de_X,}}
but it does not imply global existence for solutions staying in this region, because of the critical nature of \eqref{Eqn:SchrodCrit}.

The continuity of $\Te$ implies that for any solution $u$ in $\cH^\ep\subset\cH^{\ep_S}$, $\Te(u)\in\{\pm 1\}$ can change along $t\in I(u)$ only if $u$ goes through the small neighborhood $\cH^\ep\setminus\ck\cH\subset \ti B_{\ep/c_D}(\cW)$.
The next proposition, proved in Section \ref{Section:OnePassLemma}, implies that such a transition can happen at most once for each solution.
\begin{prop}[One-pass] \label{Prop:OnePassLemma}
There exist an absolute constant $\de_B\in(0,\de_X)$ and an increasing function $\ep_B:(0,\de_B]\to(0,\ep_S]$ satisfying $\ep_B(\de)<c_D\de$ for $\de\in(0,\de_B]$, and for any solution $u$ of \eqref{Eqn:SchrodCrit} with $u(t_0)\in\cH^{\ep_B(\de)}\cap\ti B_{\de}(\cW)$ at some $t_0\in I(u)$,
\EQ{ \label{no return}
 \exists t_+(\de)\in(t_0,T_+(u)],\text{s.t. }\CAS{t_0\le t<t_+(\de) \implies \ti d_\cW(u(t))<\de, \\
 t_+(\de)<t<T_+(u) \implies \ti d_\cW(u(t))>\de.}}
\end{prop}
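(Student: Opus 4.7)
The plan is to argue by contradiction. If the conclusion fails, then there exist $t_1 < t_2$ in $(t_0, T_+(u))$ with $u(t_1) \notin \ti B_\de(\cW)$ and $u(t_2) \in \ti B_\de(\cW)$. Taking $t_a \in [t_0, t_1]$ to be the latest time with $\ti d_\cW(u(t_a)) = \de$ before the excursion and $t_b \in (t_1, t_2]$ the earliest time after with $\ti d_\cW(u(t_b)) = \de$ gives $\ti d_\cW(u(t)) \ge \de$ on $[t_a, t_b]$ together with $\p_\ta \ti d_\cW(u(t_a)) \ge 0$ and $\p_\ta \ti d_\cW(u(t_b)) \le 0$. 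Choosing $\ep_B(\de) < c_D\de$ places the entire orbit in $\ck\cH$, so by Proposition \ref{Prop:SignProp} and the continuity of $\Te$, the value $\Te_0 := \Te(u(t)) \in \{\pm 1\}$ is constant on $[t_a, t_b]$.

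I would next apply Proposition \ref{Prop:DynEjection} forward at $t_a$ and, by time reversal, backward at $t_b$, producing times $t_a < t_a' < t_b' < t_b$ with $\ti d_\cW(u(t_a')) = \ti d_\cW(u(t_b')) = \de_X$, together with the exponential growth of $\ti d_\cW$ and of $|\la_1|$ on the ejection phases $[t_a, t_a'] \cup [t_b', t_b]$, and the sign-size estimate \eqref{Eqn:DynK} for $K(u)$. On the complementary middle interval $[t_a', t_b']$ the orbit lies outside $\ti B_{\de_X}(\cW)$, so Proposition \ref{Prop:VarEst} gives $|K(u(t))| \gec \ka(\de_X)$ with $\sign K(u(t)) = \Te_0$. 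Matching with \eqref{Eqn:DynK} and \eqref{def Te} forces $\sign \la_1(t_a) = \sign \la_1(t_b) = -\Te_0$, so the sign of $K(u)$ is constant throughout $[t_a, t_b]$, equal to $\Te_0$.

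The contradiction is obtained from a truncated virial identity. For a smooth radial spatial cutoff at scale $m > 0$, define a localized virial of the form $V_m(t) = 2\Im \int \bar u\, (x \cdot \na u)\, \psi(|x|/m)\,dx$ whose Hamiltonian derivative computes to
\EQ{
 \p_t V_m(t) = 4\, K(u(t)) + R_m(u(t)),}
with $R_m(u)$ supported in $|x|\sim m$ and controlled by the local kinetic and $L^6$ densities. Integrating from $t_a$ to $t_b$, the boundary term $V_m(t_b) - V_m(t_a)$ is $O(m\, d_\cW(u)^2)$-small at each endpoint because $u(t_a), u(t_b)$ lie in $B_{C\de}(\cW)$ and the stationary virial of $W_{\te,\si}$ vanishes by \eqref{Eqn:StatSolW}; one then needs the right-hand side to beat this from below, exploiting the sign-definiteness of $K(u)$ and its exponential size on the ejection phases.

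The main obstacle, and the essential novelty of this proposition relative to \cite{nakaschlagschrod}, is that $\cW$ is scale-invariant so the scaling parameter $\si(t)$ is not pinned by the dynamics, and the optimal choice of $m$ depends on $\si$, which varies along the orbit. I would split into the two cases $\Te_0 = \pm 1$. For $\Te_0 = +1$, introduce the hyperbolic scale $m_H \sim e^{-\si}$ set by the ejection phases and the variational concentration radius $m_V^+$ capturing most of $\|\na u\|_{L^2}^2$ on $[t_a', t_b']$; in the favorable regime one picks $m$ between $m_H$ and $m_V^+$ to dominate $R_m$ by $K$. In the degenerate regime where these scales separate badly, derive a decay estimate of the type $\|u/|x|\|_{L^2} \ll 1$ on part of the variational region and feed it into a Bourgain-type energy-induction argument \cite{bourgjams}: this constructs an auxiliary solution with energy strictly below $E(W)$ that still approaches $\cW$ by a perturbation argument, contradicting \cite{kenmer}. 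For $\Te_0 = -1$, introduce a single threshold $m_V^-$ quantifying where $K(u)$ is strongly negative; integrating the virial forces $m_V^-$ large, while a Hardy-type decay estimate forces it bounded, again a contradiction. Finally $\ep_B(\de)$ is chosen increasing in $\de$ and small enough (with $\ep_B(\de) < c_D\de$) that $E(u) - E(W) \le \ep_B(\de)^2 \le \ep_V(\de_X)^2$ covers all applications of Proposition \ref{Prop:VarEst} and the induction step closes.
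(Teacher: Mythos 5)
Your high-level strategy coincides with the paper's (contradiction between two times at distance $\de$, ejection at both ends, variational lower bounds in between, a localized virial with a cut-off $m$ chosen against the uncontrolled scaling parameter, Bourgain's induction when $\Te=+1$ and the threshold $m_V^-$ when $\Te=-1$), but your concrete setup has a genuine gap. You assume that after the forward ejection from $t_a$ (reaching $\ti d_\cW=\de_X$ at $t_a'$) and before the backward ejection into $t_b$ (starting at $t_b'$), the orbit stays outside $\ti B_{\de_X}(\cW)$ on all of $[t_a',t_b']$, so that Proposition \ref{Prop:VarEst} gives $|K(u)|\gec \ka(\de_X)$ with a constant sign there. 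Nothing established before the one-pass lemma rules out that $\ti d_\cW(u(t))$ dips back down arbitrarily close to $\de$ (while staying above it) many times inside $(t_a',t_b')$: only crossings of the level $\de$ are excluded, and that is precisely what is being proved. On each such re-approach the variational bound degenerates ($\ka(\ti d)$ is small), and near $\cW$ the sign of $K$ is not determined — by \eqref{Eqn:ExpandK2nd} it is controlled only after the unstable mode has grown by the factor $C_K$ in \eqref{Eqn:DynK} — so on those portions your virial integrand has no usable lower bound and the integration from $t_a$ to $t_b$ does not close. The paper deals with this by collecting the (finite) set $\sL$ of all local minima of $\ti d_\cW$ below $\de_V$ on $[t_a,t_b]$, attaching to each $s\in\sL$ a hyperbolic interval $I[s]$ obtained by running the ejection lemma both ways up to the level $\de_M$, and splitting $[t_a,t_b]=I_H\cup I_V$ with $\ti d_\cW\ge\de_V$ on $I_V$; the hyperbolic gain $\int_{I_H}\Te\,\dot\sV_m\,dt\gec m_H^2\de_M$ and the cut-off errors are summed over all $s\in\sL$, with the soliton scale $m_H=\sup_{I_H}e^{-\si}$ taken over the whole hyperbolic region and the constraint $m\gec m_H\de_M^{-1}\log(\de_M/\de)$; this is exactly where the uncontrolled scaling parameter (which may differ wildly between different $s\in\sL$) must be confronted, and your two-interval picture bypasses it.

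A smaller quantitative slip: the endpoint virial bound is $|\sV_m(t)|\lec m^2\de$ at $t\in\{t_a,t_b\}$ (linear in the distance $\de$, since only the stationary part of the virial vanishes, and quadratic in $m$ from placing $r\phi_m$ in $L^3$), not $O(m\,\de^2)$. Since the entire argument is a competition between this boundary term, the hyperbolic gain $m_H^2\de_M$, and the variational gains — leading to the smallness conditions such as \eqref{bd deB-1} and \eqref{bd deB+1} that define $\de_B$ and $\ep_B$ — these powers are not cosmetic, and your sketch as written would not produce the correct thresholds.
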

\begin{rem} \label{Rem:OnePassLemma}
By time-reversal symmetry, there also exists $t_-\in[-T_-(u),t_0)$ such that $\ti d_\cW(u(t))<\de$ for $t_-<t<t_0$ and $\ti d_\cW(u(t))>\de$ for $-T_-(u)<t<t_-$.
\end{rem}
\begin{rem} \label{in ckcH}
$\ep_B(\de)\le \ep_S$ and $\ep_B(\de)<c_D\de$ imply that $\cH^{\ep_B(\de)}\setminus\ti B_\de(\cW)\subset\cH^{\ep_S}\cap \ck\cH$, where $\Te$ is defined by Proposition \ref{Prop:SignProp}.
\end{rem}
The above proposition tells that if a solution gets out of $\ti B_\de(\cW)$, then it can never return there.
Moreover, it applies to all $\de\in(0,\de_B]$ satisfying $E(u)<E(W)+\ep_B(\de)^2$.
The solution $u$ stays around $\cW$ iff $t_+(\de)=T_+(u)$. The following proposition, proved in Section \ref{Sect:staying}, gives more precise description of such solutions.
\begin{prop} \label{Prop:staying}
Under the assumption of Proposition \ref{Prop:OnePassLemma}, suppose that $t_+(\de)=T_+(u)$.
Then there exists $t_1\in[t_0,T_+(u)]$ such that $\ti d_\cW(u(t))$ is decreasing on $[t_0,t_1)$, and $u(t)\not\in\ck\cH$ for all $t\in[t_1,T_+(u))$.
If $t_1=T_+(u)$, then $\ti d_\cW(u(t))\searrow 0$ as $t\nearrow T_+(u)$, which implies $E(u)=E(W)$. 
We have similar statements in the case $t_-(\de)=T_-(u)$ by the time-reversal symmetry. 
\end{prop}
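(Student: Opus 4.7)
The plan is to combine the Ejection Lemma (Proposition \ref{Prop:DynEjection}) with a continuity argument to locate $t_1$, and then rule out a positive distance limit in the degenerate case $t_1=T_+(u)$ via the linearized spectral dynamics.

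The key observation is that whenever $u(t^*)\in\ck\cH$ with $t^*\in[t_0,T_+(u))$, we must have $\left.\p_t\ti d_\cW(u(t))\right|_{t=t^*}<0$: if instead it were nonnegative, Proposition \ref{Prop:DynEjection} applied at $t^*$ would grow $\ti d_\cW(u)$ up to $\de_X>\de$, contradicting $\ti d_\cW(u(t))<\de$ on $[t_0,T_+(u))$. I define $t_1:=\inf\{t\in[t_0,T_+(u)]\mid u(t)\notin\ck\cH\}$ (with $\inf\emptyset=T_+(u)$). By continuity, $u(t)\in\ck\cH$ on $[t_0,t_1)$ (possibly empty, in which case $t_1=t_0$), and the observation yields strict decrease of $\ti d_\cW(u)$ there. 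For the claim $u(t)\notin\ck\cH$ on $[t_1,T_+(u))$, suppose otherwise: if $u(t^*)\in\ck\cH$ for some $t^*>t_1$, then by closedness of the complement of $\ck\cH$ the supremum $s_0:=\sup\{s\in[t_1,t^*]\mid u(s)\notin\ck\cH\}$ satisfies $u(s_0)\in\p\ck\cH$, so $\ti d_\cW(u(s_0))=\sqrt{E(u)-E(W)}/c_D$ and $\ti d_\cW(u(s_0+\eta))>\ti d_\cW(u(s_0))$ for small $\eta>0$. The Mean Value Theorem then produces $s^{**}\in(s_0,s_0+\eta)$ with $\left.\p_t\ti d_\cW(u)\right|_{s^{**}}>0$ and $u(s^{**})\in\ck\cH$, contradicting the key observation.

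In the degenerate case $t_1=T_+(u)$, $\ti d_\cW(u(t))$ strictly decreases to a limit $L\ge 0$; I claim $L=0$. Assume $L>0$. By \eqref{Eqn:DomEigenMode}, $|\la_1(t)|\sim\ti d_\cW(u(t))\to L$; by the time-reversed ejection lemma, $\sign\la_1$ is constant, so WLOG $\la_1(t)\to L'>0$. Rewrite \eqref{Eqn:EstDynLambda} as $\p_\tau\la_\pm=\pm\mu\la_\pm+O(\|v\|_{\dot H^1}^2)$ and note that $\|v(s)\|_{\dot H^1}^2\lec M(t)^2$ for $s\ge t$, where $M(t):=\ti d_\cW(u(t))$ is decreasing. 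Variation of constants for $\la_+$, together with boundedness of $\la_+$ as $\tau\to\infty$, forces the coefficient of the exponentially growing mode to cancel the forcing, giving $|\la_+(t)|\lec M(t)^2/\mu$; for $\la_-$, exponential decay of the homogeneous part plus the forcing bound yields $\limsup_{t\to T_+(u)}|\la_-(t)|\lec L^2/\mu$. Passing to the limit gives $L'\lec L^2/\mu$, whence $L=0$ or $L\gec\mu$. Since $L\le\de\le\de_B$ can be taken smaller than $\mu/C$ by shrinking the absolute constant $\de_B$ if necessary, we conclude $L=0$. This argument needs $\tau(t)\to\infty$ as $t\to T_+(u)$: if $T_+(u)<\infty$, Proposition \ref{Prop:extend v} gives $e^{-2\si(t)}<(T_+(u)-t)/3$ so $\int e^{2\si}\,dt$ diverges, and if $T_+(u)=\infty$, \eqref{Eqn:Dyntesi} keeps $\si$ bounded below and $\tau\to\infty$ trivially.

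Finally, $\ti d_\cW(u(t))\searrow 0$ gives $d_\cW(u(t))\to 0$ via Proposition \ref{Prop:DistFuncEigen}, so $u(t)$ approaches $\cW$ in $\dot H^1$ and $E(u)=\lim E(u(t))=E(W)$ by energy continuity. The case $t_-(\de)=T_-(u)$ follows by time-reversal symmetry. The main obstacle is this last ODE step: while the hyperbolic structure of the linearization near $\cW$ makes $L=0$ intuitive, one must carefully control the nonlinear remainder uniformly over a long $\tau$-interval using the decreasing envelope $M(t)$, and confirm $\tau(t)\to\infty$ even in the concentration-blow-up scenario $T_+(u)<\infty$.
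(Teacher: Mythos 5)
Your treatment of the dichotomy is essentially the paper's own argument: the observation that $u(t)\in\ck\cH$ forces $\p_t\ti d_\cW(u(t))<0$ (else Proposition \ref{Prop:DynEjection} ejects $\ti d_\cW$ up to $\de_X>\de$), combined with the mean value theorem to exclude re-entry into $\ck\cH$ after $t_1$, is exactly how the paper produces $t_1$; your explicit choice of the minimal $t_1$ and the boundary-point argument at $s_0$ are fine. Where you genuinely diverge is the degenerate case $t_1=T_+(u)$. The paper simply applies the time-reversed ejection lemma from an arbitrary late time $t$ (legitimate since $u(t)\in\ck\cH$, $\ti d_\cW(u(t))<\de<\de_X$, $\p_t\ti d_\cW(u(t))<0$), so that \eqref{Eqn:Dyndq} holds on all of $[t_0,t]$ and gives $\ti d_\cW(u(t_0))\sim e^{\mu(\ta(t)-\ta(t_0))}\ti d_\cW(u(t))$; with $\ta\to\I$ from Proposition \ref{Prop:extend v} this yields $\ti d_\cW(u(t))\to0$ at once, with no new constants. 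You instead run a stable/unstable variation-of-constants analysis on $\la_\pm$ via \eqref{Eqn:EstDynLambda} with the decreasing envelope $M(t)$; this is sound (boundedness of $\la_+$ over an infinite $\ta$-interval gives $|\la_+|\lec M^2/\mu$, the damped mode gives $\limsup|\la_-|\lec L^2/\mu$, and \eqref{Eqn:DomEigenMode} converts this into $L\lec L^2/\mu$), but it buys the conclusion only under the extra requirement $\de_B\ll\mu$, i.e.\ an a posteriori shrinking of the constant fixed in Proposition \ref{Prop:OnePassLemma}. That shrinking is admissible, since every condition imposed on $\de_X,\de_B$ in the paper is an upper bound by absolute constants and $\mu$ is absolute, but you should state this explicitly; also, $\la_1$ need not converge, so the endgame should be phrased with $\liminf|\la_1|\gec L$ and $\limsup|\la_1|\lec L^2/\mu$ rather than ``$\la_1\to L'$''. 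Note finally that your own verification of $\ta\to\I$ in the case $T_+(u)=\I$ already invokes the backward ejection lemma (to obtain \eqref{Eqn:Dyntesi} and hence a lower bound on $\si$); once that lemma is in play, \eqref{Eqn:Dyndq} gives the exponential decay of $\ti d_\cW$ directly, so the whole ODE detour and the constraint on $\de_B$ can be dispensed with --- this is precisely the paper's shortcut. (Your bound $e^{-2\si(t)}<(T_+(u)-t)/3$ for finite $T_+(u)$ is correct; alternatively, iterating Proposition \ref{Prop:extend v}, which advances $\ta$ by at least $2$ per step, gives $\ta\to\I$ in both cases simultaneously.) The final step $\ti d_\cW\to0\Rightarrow E(u)=E(W)$ and the time-reversal remark agree with the paper.
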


The next proposition, proved in Section \ref{Section:LongTimeFarGd}, describes the asymptotic behavior of solutions which are away from the ground states.
\begin{prop}[Asymptotic behavior] \label{Prop:FarFromGd}
There exists an increasing function $\ep_*:(0,\de_B]\to(0,\ep_S]$ with $\ep_* (\delta) \le \ep_B (\delta)$ for $\delta \in (0,\delta_{B})$ and the
following properties. Suppose that $u$ is a solution of \eqref{Eqn:SchrodCrit} satisfying $u([t_0,T_+(u)))\subset \cH^{\ep_*(\de)}\setminus \ti B_\de(\cW)$ for some $t_0\in I(u)$.
If $\Te(u(t))=+1$ at some $t\in[t_0,T_+(u))$, then $T_+(u)=\I$, and $u$ scatters as $t \to \infty $.
If $\Te(u(t))=-1$ and $u(t)\in L^2_x$ at some $t\in[t_0,T_+(u))$, then $T_{+}(u) < \infty$.
By time-reversal symmetry, the same statements hold for the negative time direction $(-T_-(u),t_0]$.
\end{prop}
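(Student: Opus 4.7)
\medskip

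\noindent\textbf{Proof plan for Proposition \ref{Prop:FarFromGd}.}

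The plan is to handle the two sign cases by very different methods, and to choose $\ep_*(\de)$ small enough that in both cases Proposition~\ref{Prop:VarEst} applies and yields a uniform sign for $K(u)$ on $[t_0,T_+(u))$. Specifically, I will require $\ep_*(\de)\le\min(\ep_V(\de),\ep_B(\de))$ and $\ep_*(\de)<c_D\de_1$ (with $\de_1$ from Proposition \ref{Prop:SignProp}), so that $u(t)\in\ck\cH$ with $\ti d_\cW(u(t))\ge\de_1$; then $\Te(u(t))=\sign K(u(t))$ is constant on $[t_0,T_+(u))$ by continuity, and by Proposition~\ref{Prop:VarEst} either $K(u(t))\ge\min(\ka(\de),c_V\|\na u(t)\|_2^2)>0$ (when $\Te=+1$) or $K(u(t))\le-\ka(\de)<0$ (when $\Te=-1$) uniformly on this interval.

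\medskip

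\noindent\emph{Case $\Te(u)=-1$, $u(t_0)\in L^2$:} Since $|u|^4u\in L^1_{\loc}(I(u);L^2)$ for an $\dot H^1$-solution and the mass is conserved, $u(t)\in L^2$ on $I(u)$ with $\|u\|_{L^2}=\|u(t_0)\|_{L^2}$. The standard virial identity gives
\EQ{
 \frac{d^2}{dt^2}\int_{\R^3}|x|^2|u(t,x)|^2\,dx=8K(u(t))\le -8\ka(\de)<0}
for $t\in[t_0,T_+(u))$. Since $\int|x|^2|u|^2\,dx$ is nonnegative, integrating twice forces $T_+(u)<\I$. This is the short step.

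\medskip

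\noindent\emph{Case $\Te(u)=+1$:} This is where the real work lies, and the approach is a Kenig--Merle concentration-compactness rigidity argument adapted as in \cite{nakaschlagschrod}. Suppose for contradiction that scattering fails on $[t_0,T_+(u))$ for some solution. Since $K(u(t))\ge 0$, the bound $\|\na u(t)\|_2^2\le 3E(u(t))\le 3(E(W)+\ep_*(\de)^2)$ from \eqref{unif bd Te+} gives uniform control of the $\dot H^1$-norm; combined with the Strichartz/local theory, failure of scattering must produce $\|u\|_{S([t_0,T_+(u)))}=\I$. Define
\EQ{
 E_c:=\inf\{E(\fy)\mid\fy\in\cH^{\ep_*(\de)}\setminus\ti B_\de(\cW),\ \Te(\fy)=+1,\text{ solution does not scatter forward}\},}
which satisfies $E(W)\le E_c\le E(W)+\ep_*(\de)^2$. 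Take a minimizing sequence $(u_n)$ with $E(u_n)\searrow E_c$. Apply the linear profile decomposition (radial, $\dot H^1$-critical Keraani-type) to $u_n(0)$ and the nonlinear perturbation lemma, exactly as in \cite{kenmer}. The subadditivity of $E_c$ and of the $S$-norm (controlling nonlinear profiles with subcritical energy via \cite{kenmer}) eliminates all but one nonzero profile. Crucially, one must ensure this retained profile still satisfies the far-from-$\cW$ and $\Te=+1$ constraints in the limit: the ejection lemma (Proposition~\ref{Prop:DynEjection}) is invoked to upgrade the distance hypothesis (any profile hugging $\cW$ would either stay and contradict non-scattering via Proposition \ref{Prop:staying} in the subsequent analysis, or be ejected; but once ejected with $\Te=+1$ it would scatter at strictly lower energy, contradicting minimality). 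This yields a critical element $U_c$ with $E(U_c)=E_c$, $\Te(U_c)\equiv+1$, $\ti d_\cW(U_c(t))\ge\de$, non-scattering forward in time, and whose forward trajectory $\{U_c(t)/\|\na U_c(t)\|_2^{?}\}$ is precompact in $\dot H^1\rad$ up to the scaling symmetry $S^{\si(t)}_{-1}$.

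\medskip

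\noindent\emph{Rigidity for $U_c$:} Following Kenig--Merle, radial precompactness up to scaling forces either a self-similar-type concentration at a point or a soliton-like behavior. The localized virial identity (of the form used in \cite{kenmer}), combined with $K(U_c(t))\ge\min(\ka(\de),c_V\|\na U_c\|_2^2)>0$ uniformly, shows that no nontrivial $\dot H^1\rad$ solution with compact orbit modulo scaling and $K\gtrsim 1$ can exist unless $U_c\equiv 0$; but $\ti d_\cW(U_c)\ge\de$ rules out the zero solution. This contradiction proves scattering. The main obstacle here is verifying that the one retained profile inherits both geometric constraints ($\ti d_\cW\ge\de$ and $\Te=+1$) in the limit, which is delicate because $\cW$ is itself scale-invariant, so the modulation parameter $\si$ may run off to $\pm\I$; this is precisely where the scaling coordinate $\si$ introduced in Propositions \ref{Prop:OrthDecompu}--\ref{Prop:LinearParamW} is essential, and where the argument departs from the subcritical treatment in \cite{nakaschlagschrod}. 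By time-reversal symmetry both cases extend to the backward direction, completing the proposition.
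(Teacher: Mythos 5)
Your scattering case ($\Te=+1$) follows essentially the same route as the paper: contradiction via a critical energy level, Keraani-type radial profile decomposition, the long-time perturbation lemma, extraction of a critical element compact modulo scaling only, and a rigidity step driven by the localized virial identity together with the uniform lower bound on $K$; the delicate point you flag (the scaling parameter $\si$ running off) is exactly the point the paper handles by renormalizing to a second critical element with scale bounded on one side. One structural remark there: your claim that $\Te(u(t))=\sign K(u(t))$ on all of $[t_0,T_+(u))$ does not follow from choosing $\ep_*(\de)<c_D\de_1$; Proposition \ref{Prop:SignProp} gives $\Te=\sign K$ only where $\ti d_\cW\ge\de_1$, and the hypothesis only guarantees $\ti d_\cW\ge\de$, which may be smaller than $\de_1$. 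The paper first upgrades the distance: since $u$ never re-enters $\ti B_{\de'}(\cW)$ for $\de'\in[\de,\de_B]$ (Propositions \ref{Prop:OnePassLemma} and \ref{Prop:staying}), one gets $u([t_1,T_+(u)))\subset\cH^\ep\setminus\ti B_{\de_B}(\cW)$ for some $t_1$, and only then runs the sign/variational estimates. You need this step too.

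The genuine gap is in the blow-up case $\Te=-1$. You invoke the standard virial identity $\frac{d^2}{dt^2}\int|x|^2|u|^2\,dx=8K(u)$, but this requires finite variance, i.e.\ $|x|u(t)\in L^2$, which is not part of the hypothesis: the proposition only assumes $u(t)\in L^2_x$ at some time, and for such data $\int|x|^2|u|^2\,dx$ may be infinite for all times, so the convexity argument never gets started. The paper instead uses the truncated virial $\sV_m(t)=\LR{\phi_m u, ir\p_r u}$ rewritten as in \eqref{Eqn:OgawTsut}, controls the exterior error terms (supported in $m\le|x|\le 2m$) by the radial Gagliardo--Nirenberg inequality \eqref{ext GN} and $L^2$ conservation, obtaining $-\dot\sV_m\ge\ka(\de_B)$ for $m\gg\|u_0\|_{L^2}/\ka(\de_B)$. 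Note that this alone is not enough either: linear growth of $-\sV_m$ does not contradict global existence, since $|\sV_m|\lec m\|u_r\|_{L^2}\|u_0\|_{L^2}$ can grow. The paper therefore adds a second step: if $T_+(u)=\I$ then $\|u_r(t)\|_{L^2}\to\I$, hence eventually $-K(u)\sim\|u_r\|_{L^2}^2$, and the resulting Riccati-type inequality $\p_t f\gtrsim f^2$ for $f(t)\sim\int_{T_1}^t\|u_r\|_{L^2}^2\,ds$ (up to constants involving $m$ and $\|u_0\|_{L^2}$) blows up in finite time, giving the contradiction. Your proposal is missing both the localization (Ogawa--Tsutsumi) device that makes the virial argument legitimate for merely $L^2\cap\dot H^1$ radial data and the ODE bootstrap that converts the localized estimate into finite-time blow-up.
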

Note that by Remark \ref{in ckcH} and $\ep_*\le\ep_B$, $\Te(u(t))$ is well defined for all $t\in[t_0,T_+(u))$ in the above statement.

Armed with the above propositions, we are now ready to prove Theorem \ref{Thm:Main}.
\begin{proof}[Proof of Theorem \ref{Thm:Main}]
Using $c_D,\de_B,\ep_*$ in Propositions \ref{Prop:DistFuncEigen}, \ref{Prop:OnePassLemma} and  \ref{Prop:FarFromGd}, define
\EQ{
 \ep_\star:=\ep_*(\de_B)>0, \pq \cX_\ep:=\{\fy \in \cH^\ep \mid c_D\ti d_\cW(\fy)\le\ep\}.}
Then $\cX_\ep$ is relatively closed in $\cH^\ep$, and $\cW\subset\cX_\ep\subset \ti B_{2\ep/c_D}(\cW)$.
Since $\ep_\star\le\ep_S$ and
\EQ{
 \fy \in \cH^\ep\setminus\cX_\ep
 \implies E(\fy)-E(W)<\ep^2<(c_D\ti d_W(\fy))^2 \implies \fy\in\ck\cH,}
the functional $\Te$ is defined by Proposition \ref{Prop:SignProp} on $\cH^\ep\setminus\cX_\ep$ for all $\ep\in(0,\ep_\star]$.

Next we consider the dynamics.
Let $\ep\in(0,\ep_\star]$ and let $u\in\cH^\ep$ be a solution of \eqref{Eqn:SchrodCrit}. Let $I_0(u)$ be as in the theorem and let $I_C(u):=I(u)\setminus I_0(u)$.

Take any $\de\in(0,\de_B]$ satisfying $\ep\le \ep_*(\de)$.
First suppose that there exists $t_0\in I_C(u)$ such that
\EQ{ \label{ejection}
 \ti d_\cW(u(t_0))<\de, \pq \p_t\ti d_\cW(u(t_0))\ge 0.}
Then by Proposition \ref{Prop:DynEjection}, $\ti d_\cW(u(t))$ is increasing until it reaches $\de_X$ at some $t_X\in(t_0,T_+(u))$.
Since $\ti d_\cW(u(t_0))<\de<\de_X$, there exists $t'\in(t_0,t_X)$ such that $\ti d_\cW(u(t'))=\de$.
Then Proposition \ref{Prop:OnePassLemma} implies that $\ti d_\cW(u(t))>\de$ for all $t\in(t',T_+(u))$.
Hence $\ti d_\cW(u(t))\ge \ti d_\cW(u(t_0))$ for all $t\in[t_0,T_+(u))$, which implies $[t_0,T_+(u))\subset I_C(u)$.
Then by Proposition \ref{Prop:FarFromGd}, $\Te(u(t))$ on $[t_0,T_+(u))$ decides the behavior of $u$ towards $T_+(u)$.

If the last condition of \eqref{ejection} is replaced with $\p_t\ti d_\cW(u(t_0))\le 0$, then the time reversed version of the above argument implies that $(-T_-(u),t_0)\subset I_C(u)$ and the behavior of $u$ towards $-T_-(u)$ is determined by $\Te(u(t))$ there.

Next consider the case where there exist $t_1\in I_0(u)$ and $t_2\in I_C(u)$.
Suppose that $t_1<t_2$.
Since $\ti d_\cW(u)\le \ep/c_D< \de$ on $I_0(u)$, we may assume $\ti d_\cW(u(t_2))<\de$ by decreasing $t_2$ if necessary.
Then the above argument works with $t_0:=t_2$, either forward or backward in time, but the latter case leads to a contradiction with the existence of $t_1\in I_0(u)$ smaller than $t_2$. Hence we have $\p_t\ti d_\cW(u(t_2))\ge 0$ and $[t_2,T_+(u))\subset I_C(u)$. If $t_1>t_2$, then in the same way, we deduce that $(-T_-(u),t_2]\subset I_C(u)$. Therefore, $I_0(u)$ is either empty or an interval.

Concerning the behavior of $u$ towards $T_+(u)$, it only remains to consider the following case: $I(u)=I_C(u)$ but \eqref{ejection} is never satisfied by any $t_0\in I(u)$.
In this case, there are only two possibilities: either $\ti d_\cW(u(t))\ge\de$ all over $I(u)$, or $\ti d_\cW(u(t))$ goes below $\de$ and then stays there.
In the former case, we can apply Proposition \ref{Prop:FarFromGd} to decide the behavior around $T_\pm(u)$.
In the latter case, we can apply Proposition \ref{Prop:staying} on some interval $[t_0,T_+(u))$ where $\ti d_\cW(u)<\de$. Then
\EQ{
 \limsup_{t\nearrow T_+(u)}c_D\ti d_\cW(u(t))\le \sqrt{E(u)-E(W)}<\ep,}
contradicting $I(u)=I_C(u)$.
This completes the investigation around $T_+(u)$, and the behavior towards $-T_-(u)$ is treated in the same way.
Theorem \ref{Thm:Main} is proved.
\end{proof}

\begin{rem}
The same argument as above works if we replace $X_\ep$ with
\EQ{
 \ti X_\ep := \cH^\ep \setminus \ck\cH = \{\fy\in \cH^\ep \mid E(u)\ge E(W)+(c_D\ti d_\cW(u))^2\},}
which is smaller and essentially independent of $\ep$.
In that case, however, we need to modify our conclusion for the special solutions $W^\pm$ constructed by Duyckaerts and Merle \cite{duymerle} on the threshold $E(u)=E(W)$, namely those two solutions (unique modulo the invariance) which are exponentially convergent to $W$ as $t\to\I$, and scattering or blowing up in $t<0$.
These solutions are in $\cH^\ep\cap\ck\cH$ for all $t\in I(u)$ and $\ep>0$, where $\Te=\pm 1$ according to its behavior in $t<0$.
Thus $\Te$ fails to give the correct prediction for $t>0$ in this case.
This is exactly the case $t_0=t_1=T_+(u)$ in Proposition \ref{Prop:staying}, namely $\ti d_\cW(u)\searrow 0$ as $t\nearrow T_+(u)$.
The classification in \cite{duymerle} also implies that it happens only for those special solutions.
In other words, $X_\ep$ has been enlarged from $\ti X_\ep$ in order to eliminate those solutions.
\end{rem}

\section{Orthogonal decomposition} \label{Section:ContOrth}
In this section, we prove Proposition \ref{Prop:OrthDecompu}.
Define a $C^1$ function $F:\R\times\R\times\dot H^1\rad\to\C$ by
\EQ{
 F(\theta,\sigma,\psi) & := (e^{-i \theta} S_{-1}^{-\sigma}(W + \psi) - W | \chi).}
Since $F(0,0,0)=0$ and (writing $F=F_1+iF_2=(F_1,F_2)\in\R^2$)
\EQ{
 \partial_{\theta, \sigma} F(0,0,0) &= \MAT{
  \partial_{\theta} F_{1}(0,0,0) & \partial_{\sigma} F_{1}(0,0,0) \\
  \partial_{\theta} F_{2}(0,0,0) & \partial_{\sigma} F_{2}(0,0,0)}
 \pn = \MAT{0 &  -\LR{W',\chi} \\ - \LR{W,\chi} & 0},}
the implicit function theorem yields $\de>0$ and a $C^{1}$ function $(\theta,\sigma):B_\de(0)\to\R^2$, where $B_\de(0)$ denotes the $\de$ neighborhood of $0$ in $\dot H^1\rad$, such that $F(\theta(\psi),\sigma(\psi),\psi)=0$ and $\te(0)=\si(0)=0$, which is unique in $B_\de(0)$ and a neighborhood of $0\in\R^2$.
For any $\fy \in B_\de(\cW)$, there exists $(\al,\be,\psi)\in(\R/2\pi\Z)\times\R\times B_\de(0)$ such that $\fy=e^{i\al} S_{-1}^\be(W+\psi)$. Then we put
\EQ{
 \ti\te(\fy):=\te(\psi)+\al\in \R/2\pi\Z, \pq \ti\si(\fy):=\si(\psi)+\be\in\R.}
Then defining $v$ by \eqref{Eqn:Decomp1}, we have $v=e^{-i\te(\psi)}S_{-1}^{-\si(\psi)}(W+\psi)-W$ and so
\EQ{
 \pt (v|\chi)=F(\te(\psi),\si(\psi),\psi)=0,
 \pr \|v\|_{\dot H^1}\lec|\te(\psi)|+|\si(\psi)|+\|\psi \|_{\dot H^1} \lec \|\psi \|_{\dot H^1}.}
This implies \eqref{Eqn:Decomp3}, as well as $d_\cW(\fy)\sim\|v\|_{\dot H^1}$, choosing $(\al,\be)$ such that
$d_\cW(\fy) \sim \| \fy  - e^{i \alpha} S_{-1}^{\beta} W \|_{\dot H^1}$.

To see the uniqueness of $(\ti\te,\ti\si)$ for each $ \fy \in B_\de(\cW)$, suppose that we have two ways of decomposition
\EQ{
 \fy=e^{i\al_1}S_{-1}^{\be_1}(W+v)=e^{i\al_2}S_{-1}^{\be_2}(W+v')}
with \eqref{Eqn:Decomp2} for both $v$ and $v'$, then putting $(\al,\be):=(\al_2,\be_2)-(\al_1,\be_1)$,
\EQ{ \label{orth diff}
 0=(v-v'|\chi)=((e^{i\al}S_{-1}^{\be}-1)(W+v')|\chi).}
Since $W\in\dot H^1$ and $\chi\in\dot H^{-1}$, we have
\EQ{
 \|(e^{i\al}S_{-1}^\be-1)W\|_{\dot H^1}\sim \min(|e^{i\al}-1|+|\be|,1)
 \gec \|(e^{i\al}S_1^\be-1)\chi\|_{\dot H^{-1}}.}
Then by \eqref{orth diff}, we obtain 
\EQ{
 0=(1+O(\|v'\|_{\dot H^1}))(|e^{i\al}-1|+|\be|),}
hence the uniqueness of $(\ti\te,\ti\si)$. 
\qedsymbol

\section{Evolution around the ground states} \label{Section:LinearParamW}
In this section, we prove Proposition \ref{Prop:DecompSpectv} and Proposition \ref{Prop:LinearParamW}.

First we show that we can normalize  $g_{+}$ and $g_{-}$ such that \eqref{Eqn:CondOmega1} holds.
Since $L_{-}\ge 0$ with $\Ker(L_{-})=\Span\{W\}$, we have
\EQ{
 0<c:=\LR{L_{-} g_{2},g_{2}}=-\LR{L_+g_1,g_1}=\mu\omega (g_{+},g_{-})/2.}
Hence it is enough to show  $\LR{W , g_{2}}\neq 0$ (see \cite{nakaschlagschrod} for a similar argument).
Suppose for contradiction that $\LR{W,g_2}=0$.
Then $4\LR{W^{5},g_{1}}=-\LR{L_+W,g_1}=\LR{W,\mu g_2}=0$.
Let $0<\de\ll 1$ and $v = \al W + \delta g_{1}$ with $\al =O(\delta^{2})$ to be chosen shortly.
By expansion of the energy
\EQ{
 E(W + v) = E(W) + \frac{1}{2} \LR{L_{+} v,v} + O ( \| v \|^{3}_{\dot{H}^{1}})
 < E(W) - \frac c2\delta^{2} + O (\delta^{3})}
and by expansion of $K$
\EQ{
 K(W + v) & = - 4\LR{W^5, v} + O (\| v \|^{2}_{\dot{H}^{1}})
 = - 4\al \| W \|^{6}_{L^{6}}  + O(\de^2),}
so that one can find $\al=O(\delta^{2})$ such that $K(W+v)=0$, which contradicts \eqref{Eqn:Charac2GdState}.
Hence $\LR{W,g_2}\not=0$ and we can normalize $g_\pm$ such that \eqref{Eqn:CondOmega1} holds.
Then \eqref{Eqn:DecompLambdaZero} and \eqref{Eqn:DecompLambdaOne} are immediate consequences.
Thus we obtain Proposition \ref{Prop:DecompSpectv}.

Next, injecting the decomposition \eqref{Eqn:Decompu} into the equation \eqref{Eqn:SchrodCrit}, we obtain, after straightforward computations using that $W_{\te,\si}$ is a real-valued stationary solution,
\EQ{ \label{Eqn:LinearFirst}
 v_t & = i e^{2 \sigma} \mathcal{L} v - (i \te_t + \si_t S'_{-1}) ( W + v) - i e^{2 \sigma} N(v).}
Applying the change of variable $t\mapsto\ta$ with $\dot\ta = e^{2 \sigma}$ to the above yields \eqref{Eqn:Linearization}. 

Next we consider the equations for the parameters.
Using \eqref{Eqn:Linearization}, we have
\EQ{
 \partial_\ta \lambda_{+}  = \omega ( \partial_\ta v,g_{-} )
 \pt= \omega ( i \mathcal{L} v, g_{-} ) - \omega \left( (i \te_\ta +  \si_\ta  S_{-1}') W, g_{-} \right)
 \prq+ \LR{ N(v), g_{-}} - \omega \left( (i \te_\ta + \si_\ta  S_{-1}') v, g_{-} \right).}
Using \eqref{Eqn:AdjointS} and $\cL^*=\cL$, we see that
\EQ{
 \pt \omega(i \mathcal{L} v, g_{-}) = \omega (i \mathcal{L} g_{-},v) = \mu \lambda_{+},
 \pq \omega \left( (i \te_\ta + \si_\ta  S_{-1}') W, g_{-} \right)  = 0,
 \pr \omega \left( (i \te_\ta + \si_\ta  S_{-1}')v, g_{-} \right) = - \te_\ta \LR{ v, g_{-}} -\si_\ta \LR{i v, S'_{1} g_{-}}. }
Hence
\EQ{ \label{Eqn:DynLambdaplus}
 \p_\ta \lambda_{+} & = \mu \lambda_{+} + \te_\ta \LR{v, g_{-}} + \si_\ta \LR{i v, S'_{1} g_{-}} + \LR{ N(v), g_{-}}.}
Similarly one finds that
\EQ{ \label{Eqn:DynLambdamin}
 \partial_\ta \lambda_{-} & = - \mu \lambda_{-} - \te_\ta \langle v, g_{+} \rangle  - \si_\ta \LR{ i v, S'_{1} g_{+}} - \LR{ N(v),g_{+}}.}
Next we differentiate with respect to $\tau$ the orthogonality condition $(v|\chi)=0$.
Then plugging \eqref{Eqn:Linearization} into $v_\ta$, and using $(g_\pm,\chi)=0$, we get
\EQ{
 0=(v_\ta,\chi)=(i\cL\ga,\chi)-i \te_\ta \LR{W,\chi} - \si_\ta[\LR{W',\chi}-(v,S_1'\chi)]-(iN(v),\chi).}
Hence, using that $\LR{W,\chi}\LR{W',\chi}\not=0$, we obtain
\EQ{ \label{Eqn:DynTheta}
 \partial_\ta ( \theta, \sigma ) & = O \left( \| \gamma \|_{\dot{H}^{1}} + \| v \|^{2}_{\dot{H}^{1}}   \right).}
Now plugging \eqref{Eqn:DynTheta} into \eqref{Eqn:DynLambdaplus} and \eqref{Eqn:DynLambdamin}, we see that \eqref{Eqn:EstDynLambda} holds.
Thus we obtain Proposition \ref{Prop:LinearParamW}.
\qedsymbol

\section{Control by the linearized energy}
In this section, we prove Proposition \ref{Prop:ControlOrth}.
First we prove that, for any $f\in\dot H^1\rad$,
\EQ{ \label{Eqn:PartialNegLplus}
 \langle f, g_{2} \rangle & = 0 \implies \langle L_{+} f, f \rangle \geq 0.}
If the above fails, then for all $(a,b)\in\R^2$,
\EQ{
 \LR{L_{+}(a f + b g_{1}),af + bg_{1}} \pt= a^{2} \LR{L_{+}f,f} - b^2 \LR{L_- g_2, g_2}
 \pn < 0.}
So $L_{+}$ is negative on a two dimensional subspace, which contradicts the fact that $L_{+}$ has only one negative eigenvalue.
Hence \eqref{Eqn:PartialNegLplus} holds.

We are now in position to prove \eqref{Eqn:ControlOrth}.
\eqref{Eqn:ControlOrth} with the $\gtrsim$ sign is obvious, so we only prove \eqref{Eqn:ControlOrth} with the $\lesssim$ sign.
Assume for contradiction that this is false.
Then there exists a sequence $\{w_n\}_{n \ge 1}\subset \dot{H}^{1}\rad$ such that
$\| w_{n} \|_{\dot{H}^{1}} =1$, $(w_n|\chi) \to 0$, $\LR{w_{n,1},g_{2}} =0$ and
\EQ{ \label{Eqn:CondSeq1}
 \LR{L_{+} w_{n,1}, w_{n,1}} \to 0,
 \pq \LR{L_{-} w_{n,2}, w_{n,2}} \to 0,}
as $n \to \infty$, where $w_n=w_{n,1}+iw_{n,2}$.
Since $w_n$ is bounded in $\dot H^1$, passing to a subsequence if necessary, we may assume that it is weakly converging to some $w\in\dot H^1\rad$. Then
\EQ{ \label{Eqn:Conseq1}
 \pt (w|\chi)= 0, \pq \LR{w_1, g_{2}}=0,
 \pr \LR{L_{+} w_1, w_1} \leq 0,
  \pq \LR{L_{-} w_2, w_2} \leq 0.}
Now \eqref{Eqn:PartialNegLplus} and $L_{-}\ge 0$ imply that $\LR{L_+w_1,w_1}=\LR{L_-w_2,w_2}=0$, and so
\EQ{ \label{Eqn:GammaiBound}
 \|\na w_n\|_{L^2}^2 \to \|\na w\|_{L^2}^2.}
Moreover, $w_1$ and $w_2$ are minimizers for the quadratic forms under the constraint $\LR{w_1,g_2}=0$. Hence $L_+ w_1 = L_- w_2=0$, which implies
\EQ{
 w_1 \in \Span\{W'\} & ,\pq  w_2 \in \Span\{W\}.}
Then by \eqref{Eqn:ChoiceW}, we see that $w=0$, which contradicts \eqref{Eqn:GammaiBound} and $\| w_{n} \|_{\dot{H}^{1}}=1$.
\qedsymbol

\section{Uniform local existence} \label{Section:extend v}
In this section, we prove Proposition \ref{Prop:extend v}.
Let $u(0)\in B_\de(\cW)$ for some constant $\de\in(0,\de_E)$, whose smallness will be required in the following.
By rotation and scaling, we may reduce to the case $\te(0)=\si(0)=0$.
Since $W$ is a global solution, it is a consequence of the local wellposedness in $\dot H^1$ that for $\|u(0)-W\|_{\dot H^1}=d_\cW(u(0))=:\de$ small enough, the solution $u$ exists and remains in $O(\de)$ neighborhood of $W$ for $|t|\le 3$.
Since we can solve the equation back to $t=0$ as well, it implies that $d_\cW(u(t))\sim d_\cW(u(0))=\de$ for $|t|\le 3$.
Then by Proposition \ref{Prop:LinearParamW}, we have
\EQ{
 \dot\ta=e^{2\si}, \pq \dot\si=\dot\ta\si_\ta=e^{2\si}O(\de),}
and $\si(0)=0$. Hence $e^{-2\si}=1+O(\de)$ and $\ta=\ta(0)+(1+O(\de))t$ for $|t|\le 3$.
In particular, $\ta(t)$ reaches $\ta(0)\pm 2$ within $I(u)$, if $\de>0$ is small enough. \qedsymbol

\section{Nonlinear distance function} \label{Section:NonlinearDist}
In this section, we prove Proposition \ref{Prop:DistFuncEigen}.

First we prove $\ti d_\cW\sim d_\cW$.
Since $\ti d_\cW=d_\cW$ for $d_{\cW}(\fy)\ge 2\de_L$, it suffices to consider the case $d_{\cW}(\fy) \leq 2 \delta_{L}$.
Decompose $\fy$ by Propositions \ref{Prop:OrthDecompu} and \ref{Prop:DecompSpectv}. Then we have 
\EQ{ \label{Eqn:ExpEnergy}
 E(\fy)- E(W) & = \frac{1}{2} \langle \mathcal{L} v, v \rangle - C(v)
  = -\mu \lambda_{+} \lambda_{-} + \frac{1}{2} \langle \mathcal{L} \gamma, \gamma \rangle - C(v),}
where we used \eqref{Eqn:CondOmega1}.
Here $C(\cdot)$ denotes the following functional on $\dot H^1$:
\EQ{
 C(v) & : = \frac 16\|W+v\|_{L^6}^6 - \sum_{k=0}^2 \left.\frac{1}{k!}\p_\la^k\frac 16\|W+\la v\|_{L^6}^6\right|_{\la=0}=O(\|v\|_{\dot H^1}^3),}
whose Fr\'echet derivative is $N(v)$.
Hence, using Proposition \ref{Prop:ControlOrth},
\EQ{
 d_0(\fy)^2= \| v \|_E^2 - C(v) = \|v\|_E^2 + O(\|v\|_{\dot H^1}^3) \sim d_\cW(\fy)^2 \cdot}
Then by Proposition \ref{Prop:extend v}, we have $d_1(\fy)\sim d_\cW(\fy)$, and so $\ti d_\cW(\fy)\sim d_\cW(\fy)$.

Next we prove \eqref{Eqn:DomEigenMode}.
If $d_\cW(\fy)<\de_L$ and $E(\fy)-E(W)< (c_{D} d_\cW(\fy))^{2}$, then
\EQ{
 \ti d_\cW(\fy)^{2} \sim d_0(\fy)^2=E(\fy)-E(W)+2\mu\la_1^2,}
and so $\ti d_\cW(\fy)^{2} \lec\la_1^2$.
From \eqref{Eqn:VeVH1} we see that $\ti d_\cW(\fy)^{2} \sim \| v \|_{E}^{2} \gtrsim \lambda_{1}^{2}$.

Finally, we check the invariance for the rotation and scaling.
Let $(\al,\be)\in\R^2$, $\fy \in B_{\de_E}(\cW)$ and let $u$ and $u'$ be the solutions of \eqref{Eqn:SchrodCrit} with the initial data
\EQ{
 u(0)=\fy, \pq u'(0)=e^{i\al}S_{-1}^\be \fy,}
with the decompositions by Proposition \ref{Prop:OrthDecompu} and the rescaled time functions
\EQ{
 \pt u=e^{i\te}S_{-1}^{\si}(W+v), \pq u'=e^{i\te'}S_{-1}^{\si'}(W+v'),
 \pr \dot \ta = e^{2\si}, \pq \dot\ta'=e^{2\si'}, \pq \ta(0)=0=\ta'(0).}
Then the uniqueness of $(\ti\te,\ti\si)$ in Proposition \ref{Prop:OrthDecompu} implies
\EQ{
 (\te',\si')=(\te,\si)+(\al,\be), \pq \ta'=\ta e^{2\be},}
while the invariance of the equation \eqref{Eqn:SchrodCrit} implies
$u'(t) = e^{i\al}S_{-1}^\be u(e^{2\be}t)$.
Hence $v$ is invariant in the rescaled time, namely
\EQ{
 \ta(t)=\ta'(t') \implies v(t)=v'(t'),}
which is inherited by $\la_*$ and $\ga$.
Therefore $d_0$ and $d_1$ are invariant, so is $\ti d_\cW$.

\section{Dynamics in the ejection mode} \label{Section:Ejection}
In this section, we prove Proposition \ref{Prop:DynEjection}.
Let $u$ be a solution in the ejection mode \eqref{Eqn:EjecScplus} at $t=t_0\in I(u)$.
Since $\de_X<\de_L$, Proposition \ref{Prop:extend v} implies that either there is a minimal $\ta_X>\ta_0$ such that $\ti d_\cW(u)=\de_X$ at $\ta=\ta_X$, or $\ti d_\cW(u)<\de_X$ for all $\ta\in(\ta_0,\I)$.
Let $\ta_X:=\I$ in the latter case.
Then in both the cases, we have $\ti d_\cW(u)<\de_X$ for $\ta\in(\ta_0,\ta_X)$.
Choosing $\de_X$ small enough ensures that $\ti B_{\de_X}(\cW)\subset B_{\de_L}(\cW)$. Then $\ti d_\cW(u)=d_1(u)\sim|\la_1|$ on $\ta\in(\ta_0,\ta_X)$.
Hence, using the definition \eqref{def d0}-\eqref{def d1} of $d_1$ and the equations \eqref{Eqn:EstDynLambdaOne} of $\la_j$,
\EQ{
 \pt\p_\ta \ti d_\cW(u)^2 = \phi*\p_\ta 2\mu\la_1^2
  =\phi*[4\mu^2\la_1\la_2+O(\la_1^3)],
 \pr\p_\ta^2\ti d_\cW(u)^2
  = \phi*[4\mu^3(\la_1^2+\la_2^2)+O(\la_1^3)]+\phi'*O(\la_1^3)
  \sim \la_1^2 \sim \ti d_\cW(u)^2,}
where we also used Proposition \ref{Prop:extend v} to remove the convolution in the last step.
Since $\p_t\ti d_\cW(u(t_0))\ge 0$, the last estimate implies that $\ti d_\cW(u)$ is strictly increasing for $\ta\in(\ta_0,\ta_X)$.
It also implies exponential growth in $\ta$ of $\ti d_\cW$, so it is impossible to have the case $\ta_X=\I$ above.
In other words, there exists $T_X<T_+(u)$ such that $\ti d_\cW(u)$ reaches $\de_X$ at $\ta=\ta_X=\ta(T_X)$.
Since $\ti d_\cW\sim|\la_1|$ is positive continuous on $(\ta_0,\ta_X)$, $\la_1(\ta)$ cannot change the sign.
Let $\sg:=\sign\la_1(\ta)\in\{\pm\}$ be its sign.

Next we show the more precise exponential behavior.
Since $\p_\ta\ti d_\cW(u)^2\ge 0$ at $\ta=\ta_0$, there exists $\ta\in(\ta_0-2,\ta_0+2)$ where $\p_\ta\la_1^2\ge 0$, and so $\la_1\la_2\gec-|\la_1|^3$.
Since $\p_\ta(\la_1\la_2)\sim\la_1^2\sim \la_1(\ta_0)^2$ for $|\ta-\ta_0|<2$, there exists $\ta_1\in(\ta_0,\ta_0+2)$ such that $\la_1(\ta_1)\la_2(\ta_1)\gec-|\la_1(\ta_1)|^3$, or equivalently $\sg\la_2(\ta_1)\gec-\la_1(\ta_1)^2$.
Then $\sg\la_+(\ta_1)\ge|\la_1(\ta_1)|/2$ and $\sg\la_-(\ta_1)\ge 0$.
Let $\ti R:=|\la_1(\ta_1)|$, and suppose that for some $\ta_2\in(\ta_1,\ta_X)$
\EQ{
 \ta_1<\ta<\ta_2 \implies |\la_1(\ta)| \le 2\ti Re^{\mu(\ta-\ta_1)} \lec \de_X.}
Then the equations \eqref{Eqn:EstDynLambda} of $\la_\pm$ together with $\|v\|_{\dot H^1}\sim|\la_1|$ imply for $\ta\in(\ta_1,\ta_2)$,
\EQ{
 |\la_\pm(\ta)-e^{\pm\mu(\ta-\ta_1)}\la_\pm(\ta_1)| \lec \ti R^2 e^{2\mu(\ta-\ta_1)} \lec \de_X \ti Re^{\mu(\ta-\ta_1)},}
and so
\EQ{
 |\la_1|=\frac{\sg}{2}(\la_++\la_-) \CAS{ \le (1+C\de_X)\ti Re^{\mu(\ta-\ta_1)} <2\ti Re^{\mu(\ta-\ta_1)},\\
 \ge (1/4-C\de_X)\ti Re^{\mu(\ta-\ta_1)}> \ti Re^{\mu(\ta-\ta_1)}/5.}}
Hence the continuity in $\ta$ allows us to take $\ta_2=\ta_X$.
Moreover the above estimates together with $|\la_1|\sim \ti R$ on $(\ta_0,\ta_1)$ implies that, with $R:=\ti d_\cW(u(\ta_0))$,
\EQ{ \label{Eqn:Equivdlamb1}
 \ta_0\le\ta\le\ta_X \implies \ti d_\cW\sim\sg\la_1\sim Re^{\mu(\ta-\ta_0)}.}

In order to estimate $\ga$, consider the expansion of the energy \eqref{Eqn:ExpEnergy} without the $\gamma$ terms. We denote this expansion by $E_{\gamma^{\perp}}$:
\EQ{ \label{Eqn:ExpEnergyGamma}
 E_{\gamma^{\perp}}(u)  & := - \mu \lambda_{+} \lambda_{-} - C(\lambda_{+}  g_{+} + \lambda_{-}  g_{-})}
Notice that $ C'(f)= N(f)$. By this observation, \eqref{Eqn:DynLambdaplus} and \eqref{Eqn:DynLambdamin} we see that
\EQ{
 \partial_\ta E_{\gamma^{\perp}}(u)
& =  \LR{ N(v) - N (\lambda_{+} g_{+} + \lambda_{-} g_{-}),  g_{+}} \partial_\ta \lambda_{+} 
\prq +   \LR{  N(v) - N ( \lambda_{+} g_{+} + \lambda_{-}  g_{-}), g_{-}} \partial_\ta \lambda_{-}
\prq + \te_\ta  \left( \langle v, g_{-} \rangle \partial_\ta \lambda_{-}
+ \langle v , g_{+} \rangle \partial_\ta \lambda_{+}  \right)
\prq + \si_\ta  \left( \langle iv  , S'_{1} g_{-} \rangle  \partial_\ta \lambda_{-}
+ \langle iv, S'_{1} g_{+} \rangle \partial_\ta \lambda_{+} \right).}
This together with \eqref{Eqn:DynTheta} implies that, for $\ta\in[\ta_0,\ta_X]$,
\EQ{ \label{Eqn:EstDerEuEgamma}
| \partial_\ta ( E(u) - E(W) - E_{\gamma^{\perp}}(u))  |
  \lesssim  \lambda^{2}_{1} \| \gamma \|_{ \dot{H}^{1}} + \lambda^{4}_{1}.}
Moreover, using the elementary inequality  $| N(f)| \lesssim \max{(W^{3} |f|^{2}, |f|^{5})}$, Sobolev and H\"older, we see that
\EQ{
 \left| C(v) - C(\lambda_{+} g_{+} + \lambda_{-} g_{-})  \right| & \lesssim |\lambda_{1}|^{2} \| \gamma \|_{\dot{H}^{1}}.}
Now, by Proposition \ref{Prop:ControlOrth}
\EQ{  \label{Eqn:EstPointWiseNrj}
 E(u)- E(W) - E_{\gamma^{\perp}}(u) & = \frac{1}{2} \langle \mathcal{L} \gamma, \gamma \rangle +
C( \lambda_{+} g_{+} + \lambda_{-} g_{-}) - C(v) \\
& \sim \| \gamma \|^{2}_{\dot{H}^{1}} + O ( |\lambda_{1}|^{2} \| \gamma \|_{\dot{H}^{1}} ),}
which implies, by \eqref{Eqn:Equivdlamb1} and \eqref{Eqn:EstDerEuEgamma}, that \eqref{Eqn:Dynvperp} holds.
Plugging \eqref{Eqn:Dynvperp} into \eqref{Eqn:DynTheta} and then integrating in $\ta$, we obtain \eqref{Eqn:Dyntesi} as well.

Next we show \eqref{Eqn:DynK}. Expanding $K$, and using $L_+W=-4W^5$, we have
\EQ{ \label{Eqn:ExpandK2nd}
 K(W+v) & = - 4 \langle W^{5}, v \rangle + O (\| v \|^{2}_{\dot{H}^{1}}) \\
& = - 2 \mu \lambda_{1} \langle W,g_{2} \rangle - 4 \langle W^{5}, \gamma \rangle + O (\| v \|^{2}_{H^{1}}).}
This, combined with $\LR{W,g_2}>0$ (see \eqref{Eqn:CondOmega1}) as well as the above estimates on $\la_1,\ga,v$, proves \eqref{Eqn:DynK}.
\qedsymbol

\section{Sign functional} \label{Section:Sign}
In this section, we prove Proposition \ref{Prop:SignProp}.
On one hand, Proposition \ref{Prop:VarEst} implies that $\sign K$ is constant on each connected component of $\cH^\ep\setminus\ti B_{\delta}(\cW)$, provided that $\ep\le\ep_V(\de)$.
On the other hand, Proposition \ref{Prop:DistFuncEigen} implies that $\sign \lambda_{1}$ is constant on each connected component of $B_{\de_L}(\cW)\cap\ck\cH$.
Hence, after fixing $\de_1,\de_2,\ep_S$ such that $0<\de_1<\de_2\ll\de_X$ and $\ep_S\le\ep_V(\de_1)$, the functional $\Te$ is well-defined and continuous on $\ck\cH^{\ep_S}:=\cH^{\ep_S}\cap\ck\cH$ by \eqref{def Te}, once we prove that $-\sign \lambda_{1}= \sign K$ on
\EQ{
 Y:=\{\fy \in \ck\cH^{\ep_S} \mid \delta_{1} \leq \tilde{d}_{\cW} (\fy) \leq \delta_{2}\}.}
To this end, take any solution $u$ of \eqref{Eqn:SchrodCrit}  with initial data $u(0)\in Y$.
By applying Proposition \ref{Prop:DynEjection} either forward or backward in time, there exists $t_X\in I(u)$ such that $\ti d_\cW(u(t_X))=\de_X$ and $\ti d_\cW(u(t))\in[\ti d_\cW(u(0)),\de_X]\subset[\de_1,\de_X]$ between $t=0$ and $t=t_X$, so $u(t)$ remains in $\ck\cH^{\ep_S}\cap B_{\de_L}(\cW)\setminus \ti B_{\de_1}(\cW)$.
Hence $\sign\la_1(u(t))$ and $\sign K(u(t))$ are unchanged between $t=0$ and $t=t_X$, whereas $-\sign\la_1(u(t_X))=\sign K(u(t_X))$ by \eqref{Eqn:DynK} with $\de_2\ll\de_X$.
Therefore $-\sign\la_1=\sign K$ on $Y$.

Finally we prove that $\Te=\sign K$ on $\cH^0$.
Since $0\in\ck\cH$ is away from $\cW$, we have $\Te(0)=\sign K(0)=+1$.
If $E(u)<E(W)$ and $u\not=0$, then $K(u)\not=0$ by \eqref{Eqn:Charac2GdState}.
By
\EQ{
 \la\p_\la E(\la u)= K(\la u)=\la^2\|\na u\|_{L^2}^2-\la^6\|u\|_{L^6}^6,}
there is a unique $\la_0>0$ such that for $0<\la_-<\la_0<\la_+<\I$
\EQ{
 K(\la_-u)>0=K(\la_0u)>K(\la_+u).}
If $K(u)>0$, then $\{\la u\}_{0\le \la\le 1}$ is a $C^0$ curve in $\cH^0\subset\ck\cH$ connecting $u$ and $0$.
Hence by continuity $\Te(u)=+1$.
If $K(u)<0$, then $\{\la u\}_{1\le\la<\I}$ is a $C^0$ curve in $\cH^0$ connecting $u$ with the region $E(u)<0$, where $\cW$ is far and so $\Te=\sign K=-1$. Hence by continuity $\Te(u)=-1$.
The invariance of $\Te$ for the rotation and scaling follows from that of $K$ and $\la_1$, the latter being proved in Section \ref{Section:NonlinearDist}.
\qedsymbol

\section{One-pass lemma} \label{Section:OnePassLemma}
In this section we prove Proposition \ref{Prop:OnePassLemma}.

\subsection{Setting}
Let $\de,\ep>0$ and let $u$ be a solution satisfying $u(t_0)\in\cH^\ep\cap\ti B_\de(\cW)$ at some $t_0\in I(u)$.
The solution $u$ is fixed for the rest of proof, so we denote for brevity,
\EQ{
 \ti d(t):=\ti d_\cW(u(t)).}
We will define shortly the hyperbolic and the variational regions in $\cH^\ep$. In order to distinguish them, we use small parameters $\de_V,\de_M\in(0,\de_X]$, which will be fixed as absolute constants in the end.
First we impose the following upper bounds on $\de$ and $\ep$
\EQ{ \label{dep small}
 0<\de \ll \de_V \ll \de_M\le\de_X, \pq 0<\ep\le\min(\ep_S,\ep_V(\de_V)), \pq \ep<c_D\de.}
Since $\ep\le\ep_S$ and $\ep<c_D\de$, we have
\EQ{
 \cH^\ep\setminus\ti B_\de(\cW) \subset \cH^{\ep_S}\cap\ck\cH.}

Put $t_a:=\sup\{t_1\in(t_0,T_+(u))\mid t_0 < t\le t_1\implies \ti d(t)<\de\}$.
Since $\ti d(t_0)<\de$, we have $t_a\in(t_0,T_+(u)]$.
If $t_a=T_+(u)$ then \eqref{no return} holds with $t_+=T_+(u)$.
Hence we may assume without loss of generality that $t_a<T_+(u)$ and so $\ti d(t_a)=\de$.
If $ \{t\in(t_a,T_+(u))\mid \ti d(t)\le\de \}$ is empty, then \eqref{no return} holds with $t_{+} = t_{a}$.
If not, let $t_b:=\inf\{t\in(t_a,T_+(u))\mid \ti d(t)\le\de\}$. Applying Proposition \ref{Prop:DynEjection} at $t=t_a$ implies $t_a<t_b$.
Thus in the remaining case, we have
\EQ{
 t_0<\exists t_a<\exists t_b<T_+(u),
 \pq \ti d(t_a)=\de=\ti d(t_b)=\min_{t\in[t_a,t_b]}\ti d(t),}
from which we will derive a contradiction for small $\delta >0$ and for small $\epsilon > 0$ with $\delta-$dependent smallness. 

The rest of proof concentrates on the interval $[t_a,t_b]$, where
$u(t)$ stays in $\ck\cH$, and so $\Te(u(t))\in\{\pm 1\}$ is a constant, abbreviated by $\Te$ in the following.
$\ti d(t_a)=\de$ implies $E(u)=(1+O(\de^2))E(W)\gec 1$.

\subsection{Hyperbolic and variational regions}
Let $s$ be any local minimizer of the function $\ti d(t)$ on $[t_a,t_b]$ such that $\ti d(s) < \de_V$.
Then Proposition \ref{Prop:DynEjection} from $t=s$, forward and backward in time if $s \notin \{ t_{a},t_{b} \}$, forward in time if
$s = t_{a}$, and backward in time if $s=t_{b}$, yields a unique subinterval $I[s]\subset [t_a,t_b]$ such that
\begin{enumerate}
\item $\ti d(t)\sim -\Te \la_1(t) \sim \ti d(s)e^{\mu|\ta(t)-\ta(s)|}$ on $I[s]$,
\item $\ti d(t)^2$ is strictly convex as a function of $\ta$ on $I[s]$ with a minimum $<\de_V$ at $t=s$,
\item $\ti d(t)=\de_M$ on $\p I[s]\setminus\{t_a,t_b\}$.
\end{enumerate}

Let $\sL$ be the set of those local minimum points.
Since $\ti d(t)$ ranges over $[\de_V,\de_M]$ between any pair of points in $\sL$, its uniform continuity on $[t_a,t_b]$ implies that $\sL$ is a finite set.
Decompose the interval $[t_a,t_b]$ into the hyperbolic time $I_H$ and the variational time $I_V$ defined by the following
\EQ{
 I_H:=\bigcup_{s\in\sL} I[s], \pq I_V:=[t_a,t_b]\setminus I_H.}
By the definition of $\sL$ and $I[s]$, we have
\EQ{
 t\in I_H \implies \de\le\ti d(t)\le\de_M, \pq t\in I_V \implies \ti d(t)\ge\de_V.}
In particular, the coordinates $\si,\te,v,\la_{+},\la_{-},\la_{1},\la_{2},\ga$ and $\ta$ are defined on $I_H$.
Since $u$ is fixed, we regard those as functions of $t\in I_H$ in the rest of proof.

The soliton size on $I_H$ is measured by
\EQ{ \label{Eqn:EstSizeIH}
 m_{H} & :=  \sup_{t\in I_H} e^{-\si(t)} \sim \max_{s\in\sL} e^{-\si(s)},}
where the equivalence follows from \eqref{Eqn:Dyntesi} on each $I[s]$.
The hyperbolic dynamics in $\ta$ on $I[s]$ together with the time scaling $\dot\ta=e^{2\si}$ implies that
\EQ{
 e^{2\si(s)}|I[s]| \sim \log(\de_M/\ti d(s)) \in [\log(\de_M/\de_V),\log(\de_M/\de)].}

\subsection{Virial identity}
Now we consider a localized virial identity. For $m>0$, put
\EQ{
 \sV_m(t):= \LR{\phi_m u, ir\p_r u}.}
Then from the equation \eqref{Eqn:SchrodCrit}, we obtain
\EQ{ \label{Eqn:RHS}
 \dot\sV_m
 \pt = 2 \LR{|u_r|^2,\partial_{r}  r \phi_{m}}
 - \frac 12\LR{|u|^{2}, \De(r\p_r+3)\phi_m}
 - \frac{2}{3} \LR{|u|^6,(r\p_r+3)\phi_m}
 \pr= 2 K(\phi_{m} u) + 2 \int_{ m \leq|x| \leq 2m} \left(  |u_r|^{2}\partial_{r} r \phi_{m}
- | \partial_{r} (\phi_{m} u)|^{2} \right)  \, dx
\prQ - \frac{1}{2} \int_{ m \leq |x| \leq 2m} |u|^{2} \De(r\p_r+3)\phi_m dx
\prQ - \frac{2}{3} \int_{ m \leq |x| \leq 2m} (|u|^{6}(r\p_r+3)\phi_m  + 3|\phi_{m} u|^6) \, dx
\pr = 2 K(\phi_{m} u) + O(E_m),}
where
\EQ{
 E_m(t) := \int_{m \le |x|\le 2 m} |\na u|^{2} + |u/r|^2 + |u|^{6} \, dx.}
Using the decomposition \eqref{Eqn:Decompu}, \eqref{Eqn:VeVH1}, and Proposition \ref{Prop:extend v}, placing $W_\si $, $S_{-1}^{\si} v$ in $L^6_x$, and placing $\partial_{r}W_\si$, $\partial_{r} (S_{-1}^{\si}v)$ in $L^{2}_x$, we see that
\EQ{ \label{Eqn:EstBound}
 t\in\{t_a,t_b\}\implies |\sV_m(t)| \lec m^{2} \de.}

We need to estimate $K(\phi_{m} u)$ and $E_m$ on $I_{H}$ for $m$ to be chosen properly.
Using the scale invariance of $K$, \eqref{Eqn:ExpandK2nd}, and Sobolev's inequality as well, we obtain
\EQ{
 \pt K(\phi_m u)=K(\phi_{\ti m}(W+v))=K(W+(v-\phi_{\ti m}^C(W+v))
 \pr=-4\LR{W^5,v-\phi_{\ti m}^C(W+v)}+O(\|\phi_{\ti m}v-\phi_{\ti m}^CW\|_{\dot H^1}^2)
 \pr=-2\mu\la_1\LR{W,g_2}+O(\|\ga\|_{\dot H^1}+\|W^5\|_{L^{6/5}(|x|>\ti m)}+\|\phi_{\ti m}^C W\|_{\dot H^1}^2+\|v\|_{\dot H^1}^2),}
where $\ti m(t):=me^{\si(t)}$.
The decay of $W$ implies
$\|W^5\|_{L^{6/5}(|x|>\ti m)} \lec \LR{\ti m}^{-5/2}$
and $\|\phi_{\ti m}^C W\|_{\dot H^1}^2 \lec \LR{\ti m}^{-1}$.
Plugging these into the above yields, for $s\in\sL$ and $t\in I[s]$,
\EQ{ \label{Eqn:ExpKa}
 K(\phi_m u) = -2\mu\LR{W,g_2}\la_1 + O(\ti d(s)+\la_1^2+\LR{\ti m}^{-1}).}
Similarly, the decomposition \eqref{Eqn:CondOmega1} and Hardy inequality yield for $t\in I_H$
\EQ{ \label{Eqn:EstXi}
 \int_{|x|>m}( |\na u|^2+|u/r|^2+|u|^6 )dx  \lec  \la_1^2 + \LR{\ti m}^{-1}.}
Putting these estimates into \eqref{Eqn:RHS} yields, on each $I[s]$,
\EQ{
 \dot \sV_m = -4\mu\LR{W,g_2}\la_1 + O(\ti d(s)+\la_1^2+\LR{\ti m}^{-1}).}
Then using the hyperbolic dynamics of $\la_1$ and $\dot\ta=e^{2\si}\sim e^{2\si(s)}$, we obtain, with some absolute constant $C_H\ge 1$,
\EQ{
 [\Te \sV_m]_{\p I[s]} \pt\gec e^{-2\si(s)}[\de_M-C_H\LR{\ti m(s)}^{-1}\log(\de_M/\de)]
 \pr\ge e^{-2\si(s)}[\de_M-C_H(m_H/m)\log(\de_M/\de)].}
Thus we obtain
\EQ{ \label{m for H}
 m > m_X := 2C_H m_H\de_M^{-1}\log(\de_M/\de) \implies \int_{I_H}\Te\dot\sV_m dt \gec m_H^2\de_M.}

In order to control the cut-off error in $I_V$, we introduce
\EQ{ \label{def cIV}
 \mathcal{I}_{V} := \int_{I_{V}} \int_{\R^3}  |\nabla u|^{2} + |u/r|^2 + |u|^{6} \, dx \, dt
 \sim \int_{I_{V}} \int_{\R^3} |\nabla u|^{2} \, dx \, dt,}
where the equivalence follows from Hardy's inequality and $E(u)\sim E(W)>0$.
Since
\EQ{
 \int_0^\I  \int_{I_{V}} \int_{|x|>m} \frac{m}{r}(|\nabla u|^{2}  + |u/r|^2+ |u|^{6}) dx dt\frac{dm}{m}
 = \mathcal{I}_{V},}
there exists $m\in(m_0,m_1)$ for any $m_1>m_0>0$ such that
\EQ{ \label{Eqn:ObsBlow}
 \int_{I_{V}} \int_{|x|>m} \frac{m}{r} (|\nabla u|^{2}  + |u/r|^2+ |u|^{6})dx \, dt \le \frac{\cI_V}{\log(m_1/m_0)}.}

\subsection{Blow-up region} \label{Subsection:OnePassLemmaBlow}
We start with the simpler case $\Te(u)=-1$, where the solution will blow up.
First consider $\dot\sV_m$ on $I_{V}$.
Since $\ep<\ep_V(\de_V)$ and $\ti d>\de_V$ on $I_V$, Proposition \ref{Prop:VarEst} implies
\EQ{ \label{Eqn:LowerBdKBlow}
 t\in I_V \implies -K(u) \ge \ka(\de_V) \ge c_0(\de_V)\|\na u\|_{L^2}^2,}
for some constant $c_0(\de_V)>0$, since for $\|\na u\|_{L^2}^2\ge 4E(W)$ we have
\EQ{
 -K(u)=2\|\na u\|_{L^2}^2-6E(u) > 2\|\na u\|_{L^2}^2-6(E(W)+O(\de^2)) \ge \frac{\|\na u\|_{L^2}^2}{4}.}

In order to estimate $\dot\sV_m$ with $K(u)$ on $I_V$, the optimal cut-off radius is given by
\EQ{
 m_V^-:=\sup\Bigl\{R>0 \Bigm| \int_{I_V}\int_{|x|>R}(|\na u|^2-|u|^6)dxdt\le 0\Bigr\}.}
$K(u)<0$ on $I_V$ implies $m_V^->0$, while the Sobolev inequality on $|x|>R$ implies that $m_V^-<\I$.
For any $m \geq m_V^-$, we have from \eqref{Eqn:RHS} and \eqref{Eqn:LowerBdKBlow},
\EQ{ \label{Eqn:VirialIV}
 t\in I_V \implies -\dot\sV_m \pt= -2\int_{|x|\le m}(|\na u|^2-|u|^6)dx + O(E_m)  \pr\ge -2K(u)+O(E_m) \ge c_0(\de_V)\|\na u\|_{L^2}^2 + O(E_m).}
The last term can be absorbed by the other, using \eqref{Eqn:ObsBlow}.
Hence for any $m_0\ge m_V^-$, there exists $m\in(m_0,C_1(\de_V)m_0)$ for some constant $C_1(\de_V)>1$ such that
\EQ{ \label{m for V}
 \int_{I_V}-\dot \sV_m dt \gec c_0(\de_V)\cI_V.}
The minimal $m>0$ satisfying this and \eqref{m for H} satisfies
\EQ{
 m \le C_1(\de_V)\max(m_V^-,m_X),}
for which \eqref{Eqn:EstBound} with $[t_a,t_b]=I_H\cup I_V$ implies
\EQ{ \label{vir all-}
 m_H^2\de_M + c_0(\de_V)\cI_V \lec [-\sV_m]_{t_a}^{t_b} \pt\lec m^2\de
 \pr\lec (C_1(\de_V)m_V^-)^2\de + (C_1(\de_V)m_X)^2\de.}

Now we impose an upper bound on $\de$ by the condition
\EQ{ \label{mX small}
 (C_1(\de_V)m_X)^2\de \ll m_H^2\de_M,}
which is equivalent to
\EQ{ \label{bd deB-1}
 (C_1(\de_V)C_H)^2\de\log^2(\de_M/\de) \ll \de_M^3.}
Then the last term in \eqref{vir all-} is absorbed by the first one, hence
\EQ{ \label{vir red-}
 m_H^2 \de_M + c_0(\de_V)\cI_V \lec (m_V^- C_1(\de_V))^2\de .}

In order to bound $m_V^-$, we use the equation for $|u|^2$:
\EQ{ \label{Eqn:IdentLocMass}
 \p_t |u|^{2} & =  2 \Im \left( \nabla \cdot (\nabla u \bar{u}) \right).}
Multiplying it with $\phi_{m/2}^C/r^2$ and integrating on any interval $J\subset I_V$, we obtain
\EQ{ \label{Eqn:DecayEstVar}
 [\LR{|u/r|^2,\phi_{m/2}^C}]_{\partial J}
 \pt\lec
 \int_{J} \int_{|x|>m/2}\frac{|uu_r|}{r^3} dxdt
 \pr\lec \frac{1}{|\mathcal{I}_{V}|} \int_{I_V} \int_{|x|>m/2} \frac{m^2}{r^2}( |u_r|^{2} + |u/r|^2 ) dx dt,}
for $m\ge \cI_V^{1/2}$.
On the other hand, we have from \eqref{Eqn:EstXi},
\EQ{
 t\in \p I_V \implies \int_{|x|>m/2}|u/r|^{2} dx
  \lec m_H/m+\de_M^2.}
Hence by \eqref{Eqn:ObsBlow}, there exists $m\sim\max(m_H,\cI_V^{1/2})$ such that
\EQ{ \label{ext u/r small}
 \sup_ { t \in I_{V} }\|u/r\|_{ L^{2} (|x|>m ) } \ll 1.}

Using the radial Gagliardo-Nirenberg inequality
\EQ{ \label{Eqn:RadSobHardyIneq}
 \| r^{1/2}\fy\|_{L^{\infty} (|x|> m)} & \lesssim \|\p_r \fy \|_{L^{2} (|x|> m) }^{1/2}  \|\fy/r\|_{L^{2} (|x|> m)}^{1/2},}
we have, for any $m\ge 0$ and $\fy\in\dot H^1\rad$,
\EQ{ \label{ext GN}
 \int_{|x|>m}|\fy|^6dx \pt\le \|r^{1/2}\fy\|_{L^\I(|x|>m)}^4\|\fy/r\|_{L^2(|x|>m)}^2
 \pr\lec \|\na\fy\|_{L^2(|x|>m)}^2\|\fy/r\|_{L^2(|x|>m)}^4.}
Plugging \eqref{ext u/r small}, we obtain for $t\in I_V$,
\EQ{
 \|u\|_{L^6(|x|>m)}^6 \lec \|\na u\|_{L^2(|x|>m)}^2\|u/r\|_{L^2(|x|>m)}^4 \ll \| \nabla u \|^{2}_{L^{2} (|x|> m)},}
which implies $\int_{|x|>m}( |\na u|^2-|u|^6 )dx\ge 0$,
and, by the definition of $m_V^-$,
\EQ{ \label{bd mV-}
 m_V^- \le m \sim \max(m_H,\cI_V^{1/2}).}
Hence, imposing another upper bound on $\de$ by the condition:
\EQ{ \label{bd deB-2}
 C_1^2(\de_V)\de \ll c_0(\de_V),}
we see that \eqref{bd mV-} contradicts \eqref{vir red-}.

In conclusion, the smallness conditions on $\de,\ep$ in the case $\Te=-1$ are \eqref{bd deB-1}, \eqref{bd deB-2}, and \eqref{dep small}, which determine $\de_B$ and $\ep_B$.

\subsection{Scattering region} \label{Subsection:OnePassLemmaScatt}
Now we consider the case $\Te(u)=+1$, where the solution will scatter.
The argument is similar to that in the previous case, but more involved.
In particular, we need several smallness conditions on $\de_M$.

First observe that there exists an absolute constant $C_E  \sim 1$ such that
\EQ{ \label{Eqn:ControlOfH1}
  1/C_E \le \| u(t) \|^{2}_{\dot{H}^{1}} \le C_E }
for all $t\in I(u)$. Indeed, since $E(u)\sim E(W)\sim 1$, the upper bound follows from \eqref{Eqn:RelNrjK}, while the lower bound follows from $E(\fy)\sim\|\fy\|_{\dot H^1}^2$ for small $\fy\in\dot H^1$.

Next we estimate $K(\phi_{m}u)$ on $I_V$.
Using \eqref{Eqn:DefI} and Proposition \ref{Prop:VarEst}, we see that
\EQ{
 I( \phi_m u)  \le I(u) < E(W)+\ep^2-\ti\ka(\de_V)/2 \le I(W) - \ti\ka(\de_V)/3,}
provided that
\EQ{ \label{bd ep+}
 6\ep^2 \le \ti\ka(\de_V):=\min(\ka(\de_V), c_V/C_E).}
Then using \eqref{Eqn:SobIneqMax} and \eqref{Eqn:StatSolW}, we obtain
\EQ{ \label{Eqn:LowerBdKVarScat1}
 K (\phi_m u) & \geq \| \nabla( \phi_m u ) \|^{2}_{L^{2}}  \left( 1 - \|W\|_{L^6}^2\| \phi_m u \|^{4}_{L^{6}}/\|W\|_{\dot H^1}^2  \right) \\
 & \geq  \| \nabla( \phi_m u ) \|^{2}_{L^{2} }\left[1- \|W\|_{L^6}^2(\|W\|_{L^6}^6-\ti\ka)^{4/6}/\|W\|_{\dot H^1}^2  \right]
 \pr\gec \ti\ka(\de_V)\| \nabla (\phi_m u )\|^{2}_{L^{2}}
 \ge \ti\ka(\de_V)\|\na u\|_{L^2(|x|<m)}^2. }

In order to decide the cut-off for $I_V$, we put
\EQ{ \label{Eqn:DefmV}
 m_V^+(\de_M):=\inf\Bigl\{R>0 \Bigm| \int_{I_{V}} \int_{|x| < R} |\nabla u|^{2} \, dx \, dt & \geq  \de_M^3|I_{V}|\Bigr\}\in(0,\I).}
By \eqref{Eqn:RHS}, \eqref{Eqn:ObsBlow}, \eqref{Eqn:ControlOfH1}, and \eqref{Eqn:LowerBdKVarScat1}, there exists $C_2(\de_V)\in(1,\I)$ such that for any $m_0\ge m_V^+(\de_M)$, there exists $m\in(m_0,C_2(\de_V)m_0)$ such that
\EQ{ \label{m for V+}
 \int_{I_V} \dot\sV_m dt \gec \ti\ka(\de_V)\de_M^3|I_V|.}
Note that $\de_V$ depends on $\de_M$ through the condition $\de_V\ll\de_M$ in \eqref{dep small}, which allows us to determine $C_2$ in terms of $\de_V$ only. 
The minimal $m>0$ satisfying both this and \eqref{m for H} must satisfy
\EQ{
 m \le C_2(\de_V)\max(m_V^+(\de_M),m_X),}
for which \eqref{Eqn:EstBound} implies
\EQ{  \label{vir all+}
 m_H^2\de_M + \ti\ka(\de_V)\de_M^3|I_V| \sim [\sV_m]_{t_a}^{t_b} \pt\lec m^2\de
 \pr \le (C_2(\de_V)m_V^+(\de))^2\de + (C_2(\de_V)m_X)^2\de.}

Now we impose an upper bound on $\de$ by the condition
\EQ{
 (C_2(\de_V)m_X)^2\de \ll m_H^2\de_M,}
which is equivalent to
\EQ{ \label{bd deB+1}
  (C_2(\de_V)C_H)^2\de\log^2(\de_M/\de) \ll \de_M^3.}
Then the last term in \eqref{vir all+} is absorbed by the first one, and
\EQ{ \label{vir red+}
 m_H^2\de_M + \ti\ka(\de_V)\de_M^3|I_V| \pt\lec (C_2(\de_V)m_V^+(\de))^2\de
 \pr\ll (m_V^+(\de_M))^2\de_M^3\log^{-2}(\de_M/\de).}
Imposing another upper bound on $\de$ by
\EQ{ \label{bd deB+2}
  \ti\ka(\de_V)^{1/2}\log(\de_M/\de) \gg e^{1/\de_M^3},}
yields
\EQ{ \label{mV super}
 m_H + |I_V|^{1/2} \ll e^{-1/\de_M^3}m_V^+(\de_M).}
To compare $m_H$ with $|I_V|^{1/2}$,
use \eqref{Eqn:ObsBlow} with $m_0:=m_H+|I_V|^{1/2}$ and $m_1:=m_V^+(\de_M)/2$. Then there exists $m\in(m_0,m_1)$ such that
\EQ{ \label{def m}
  \int_{I_V}\int_{|x|>m}\frac{m}{r}(|\na u|^2+|u|^6+|u/r|^2)dxdt \lec \de_M^3|I_V|,}
because of \eqref{mV super}.
Using Hardy, we have
\EQ{
 \|u/r\|_{L^2(|x|<2m)}
 \pt\lec \|\phi_m u\|_{\dot H^1}+\|u/r\|_{L^2(m<|x|<2m)}
 \pr\lec \|u/r\|_{L^2(m<|x|<2m)}+\|\na u\|_{L^2(|x|<2m)}.}
Integrating its square over $I_V$, and using the definition of $m_V^+>2m$ for the $\| \nabla u \|^{2}_{L^{2}(|x| < 2m)}$ term, and
\eqref{def m} for the the $\|u/r\|^{2}_{L^{2}( m <  |x| < 2m)}$ term, we obtain
\EQ{
 \int_{I_V}\int_{|x|<2m}[|u/r|^2+|\na u|^2]dxdt \lec \de_M^3|I_V|.}
Using the radial Sobolev inequality, we have for any $\fy\in\dot H^1\rad$ and $m>0$,
\EQ{ \label{u/r 2 u6}
 \|\fy\|_{L^6(|x|<m)}^6 \lec \|\fy/r\|_{L^2(|x|<m)}^2\|r^{1/2}\fy\|_{L^\I}^4
 \lec \|\fy/r\|_{L^2(|x|<m)}^2\|\fy_r\|_{L^2}^4.}
Inserting it to the above estimate yields
\EQ{ \label{int IV small}
 \int_{I_V}\int_{|x|<2m}[|\na u|^2+|u/r|^2+|u|^6]dxdt \lec \de_M^3|I_V|.}
Noting that $m_X\ll m_H<m$ by \eqref{bd deB+2} and $m_H<m_0$, we see from the above estimate that if $m_H^2\gg\de_M^2|I_V|$, then by \eqref{Eqn:RHS} and \eqref{m for H}
\EQ{
 \int_{I_V} |\dot\sV_{m_H}| dt \ll m_H^2\de_M \lec [\sV_{m_H}]_{t_a}^{t_b} \lec m_H^2\de,}
which contradicts $\de\ll\de_M$. Therefore
\EQ{ \label{mH IV}
 m_H^2 \lec \de_M^2|I_V|.}

Next we estimate $\|u/r\|_{L^2(I_V\times\R^3)}$.
By the same argument as for \eqref{Eqn:DecayEstVar}, we have for any interval $J\subset I_V$,
\EQ{
 \left[\LR{|u/r|^2,\phi_m^C}\right]_{\partial J}
  \pt\lec \int_J \int_{|x|>m}\frac{| uu_r |}{r^3} \, dx \, dt
 \pr\lec \frac{1}{|I_{V}|} \int_{I_{V}} \int_{|x|>m} \frac{m^{2}}{r^2} ( |u_r|^{2} + |u/r|^{2})dxdt \lec \de_M^3,}
where we used $m>m_0>|I_V|^{1/2}$ and \eqref{def m}.
On the other hand, we have from \eqref{Eqn:EstXi} and then \eqref{mH IV},
\EQ{
 t\in\p I_V \implies \int_{|x|>m}|u/r|^{2} dx
  \lec \frac{m_H}{m}+\de_M^2 < \frac{m_H}{|I_V|^{1/2}} + \de_M^2 \lec \de_M.}
Combining the above two estimates yields
\EQ{ \label{Eqn:EstVar}
 \sup_{t \in I_{V}} \|u/r\|_{L^{2}(|x|>2m)}^2 \lec \de_M.}
Then using \eqref{int IV small} for the integral over $|x|<2m$, we obtain
\EQ{
 \int_{I_V}\int_{\R^3}|u/r|^2dxdt \lec \de_M|I_V|.}
Decomposing $I_V$ into its connected components, we obtain an interval $I\subset I_V$ such that $\p I\subset \p I_V$ and
\EQ{ \label{u/r small}
 \int_I \int_{\R^3}|u/r|^2dxdt \lec \de_M|I|.}

Now we resort to an argument by Bourgain \cite{bourgjams}, in order to reduce the problem to energy below the ground state $E(W)$.
Although Bourgain in \cite{bourgjams} treated the defocusing case, the perturbative argument works as well for the focusing equation \eqref{Eqn:SchrodCrit} under the uniform bound \eqref{Eqn:ControlOfH1} in $\dot H^1$, while the non-perturbative argument with the Morawetz estimate can be replaced with \eqref{u/r small}, as is shown below.

In order to apply the argument to the interval $I$, the first observation is
\EQ{ \label{low bd StIV}
 \|u\|_{S(I)}\gec 1.}
\begin{proof}
Let $t_0:=\inf I\in\p I\subset \p I_V$.
Then by the definition of $I_V$, we have $\ti d(t_0)=\de_M$ and Proposition \ref{Prop:DynEjection} from $t=t_0$ yields some $t_1\in I_V$ such that $\ti d(t_1)=\de_X$ and $\p_t\ti d(t)>0$ on $(t_0,t_1)\subset I$.
It suffices to show $\|u\|_{S(t_0,t_1)}\gec 1$.
The scaling invariance reduces it to the case $\si(t_0)=0$.
Then $\de_M\ll\de_X$ and $\dot\ta=e^{2\si}$ with \eqref{Eqn:Dyntesi} and \eqref{Eqn:Dyndq} imply that $t_1>t_0+1$.
Put $v:=u-W$. Then from the equation
\EQ{
 i\dot v-\De v = 5W^4v_1+iW^4v_2+N(v),}
the embedding $W^1\subset S$, and the Strichartz estimate \eqref{Strich}, we obtain for any interval $J=[a,b]\subset (t_0,t_0 + 1)$,
\EQ{
 \|v\|_{(W^1\cap L^\I_t\dot H^1)(J)}
 \pt\lec  \|v(a)\|_{\dot H^1}+ [\|W\|_{W^1(J)}+\|v\|_{W^1(J)}]^4\|v\|_{W^1(J)}
 \pr\lec \|v(a)\|_{\dot H^1}+ |J|^{2/5}\|v\|_{W^1(J)}+\|v\|_{W^1(J)}^5.}
Hence if $|J| \ll 1$ then
\EQ{
 \|v\|_{(W^1\cap L^\I_t\dot H^1)(J)}
 \lec  \|v(a)\|_{\dot H^1}.}
Repeating this estimate from $t=t_0$ on consecutive small intervals, we obtain
\EQ{
 \|v\|_{S(t_0,t_0+1)}\lec \|v\|_{W^1(t_0,t_0+1)} \lec \|v(t_0)\|_{\dot H^1} \lec \de_M,}
so,
$\|u\|_{S(t_0,t_0+1)}\ge \|W\|_{L^{10}_x}-O(\de_M) \gec 1$.
\end{proof}

Hence as in \cite{bourgjams}, we can decompose the interval $I$ such that
\EQ{
 I=[t_0,t_N],\pq t_0<t_1<\cdots<t_N,\pq I_j:=[t_j,t_{j+1}],\pq \|u\|_{S(I_j)}\in[\y,2\y)}
for a small fixed constant $\y>0$.
In the following, $c$ denotes a small positive constant, and $C(\y)$ denotes a large positive constant which may depend on $\y$, both allowed to change from line to line.

By the perturbation argument from Section 3 to (4.11) in \cite{bourgjams}, where the sign of nonlinearity is irrelevant, we have for each $j$,
\EQ{
 \|u\|_{\bar{S}^1(I_j)} \lec 1,}
and there exist a subinterval $I'_j\subset I_j$ and $R_j\lec |I'_j|^{1/2}$ such that
\EQ{
 \pt \inf_{t\in I'_j}\min(\|\na u(t)\|_{L^2(|x|<C(\y)R_j)},\|u(t)\|_{L^6(|x|<C(\y)R_j)}) \gec \y^{3/2}.}
Combining it with \eqref{u/r 2 u6} and \eqref{u/r small} yields
\EQ{
 \sum_{j=1}^N |I'_j| \le C(\y)\de_M\sum_{j=1}^N|I_j|.}
Hence there exists $j\in\{1,\dots,N\}$ such that
\EQ{ \label{conc Ij}
 R_j^2 \lec |I'_j| \le C(\y)\de_M|I_j|.}
Fix $s\in I'_j$.
By the time reversal symmetry, we may assume without loss of generality
\EQ{ \label{forward}
 t_{j+1}-s>s-t_j.}
By \cite[Lemma 5.12]{bourgjams}, there exists $R\le C(\y)R_j$ such that
\EQ{ \label{Eqn:EstH1w}
 \|\phi_R^C u(s)\|_{\dot H^1}^2 < \|u(s)\|_{\dot H^1}^2 - c\y^3.}
Let $v$ be the solution of the free Schr\"odinger equation with initial data
\EQ{
 v(s):= \phi_R u(s),}
and $w:=u-v$.
By the $L^p$ decay estimate \eqref{Lp decay}, H\"older's inequality (placing $u$ in $L^{6}$ and $\nabla u$ in $L^{2}$), and \eqref{Eqn:ControlOfH1}, we see that
\EQ{ \label{Eqn:Dispersivev1}
 \pt \| v (t) \|_{L^{6}_x} \lesssim |t-s|^{-1} \| v(s) \|_{L^{6/5}_x} \lesssim R^{2}|t-s|^{-1},
 \pr \| \nabla v(t) \|_{L^{30/13}_x} \lec |t-s|^{-1/5} \| \nabla v(s) \|_{L^{30/17}_x} \lec R^{1/5} |t - s|^{-1/5}.}
Hence using \eqref{conc Ij} and \eqref{forward}, we obtain
\EQ{ \label{vdec}
 \pt \|v(t_{j+1})\|_{L^6_x} \lec R^2/|t_{j+1}-s| \le C(\y)\de_M
 \pr \|v\|_{W^1(t_{j+1},\I)} \lec (R^2/|t_{j+1}-s|)^{1/10} \lec C(\y)\de_M^{1/10}.}

By the equation, integration by part, and H\"older's inequality, we have
\EQ{ \label{Eqn:ControlL6}
 \left[\| u(t) \|^{6}_{L^{6}}\right]_{s}^{t_{j+1}}
 \pt= 6 \int_{s}^{t_{j+1}}\Re\LR{|u|^{4}u,\dot u}dt
 \pr= 6 \int_{s}^{t_{j+1}}\Im\LR{\na(|u|^{4}u),\na u}dt
 \lec \|u \|^{2}_{\bar S^1(I_j)} \| u \|^{4}_{S(I_j)} \lec \eta^{4}.}
By the Strichartz estimate \eqref{Strich}, we have
\EQ{ \label{Eqn:ControlH1w}
 \| w( t_{j+1} ) \|_{\dot{H}^{1}}- \| w(s) \|_{\dot{H}^{1}} \lec \|u \|_{W^1(I_{j})} \| u \|^{4}_{S(I_{j})}
 \lec \eta^{4}.}
Let $\tilde{w}$ be the solution of \eqref{Eqn:SchrodCrit} with initial data $\tilde{w}(t_{j+1}):= w(t_{j+1} )$.
Then by the above estimates together with \eqref{Eqn:EstH1w}, \eqref{vdec}, and \eqref{Eqn:ControlL6}, we get
\EQ{ \label{Etiw dec}
 E(\ti w) & \leq
\frac{1}{2} \| \nabla w ( t_{j+1} ) \|^{2}_{L^{2}} -  \frac{1}{6} \| u ( t_{j+1} )  \|^{6}_{L^{6}} + O(\eta^{4})
\\ & \leq \frac{1}{2} \| \nabla w (s) \|^{2}_{L^{2}} - \frac{1}{6} \| u(s) \|^{6}_{L^{6}} + O(\eta^{4})
 \pr\leq E(u) -  c \eta^{3} \le E(W) + \ep^2 - c\y^3 < E(W)-c\y^3/2,}
where in the first and last steps, we imposed upper bounds on $\de_M$ and $\epsilon$ respectively: 
\EQ{\label{epseta}
 C(\y)\de_M\ll\y^4, \pq \epsilon \ll \y^{3/2}.}

Similarly, plugging \eqref{Eqn:EstH1w} into \eqref{Eqn:ControlH1w} yields
\EQ{ \label{nablauinfnablaW}
\| \nabla  \tilde{w} (t_{j+1}) \|^{2}_{L^{2}}  \leq \| \nabla u (s) \|^{2}_{L^{2}} - c \eta^{3},}
while $E(u) < E(W) + \epsilon^{2}$ and \eqref{Eqn:RelNrjK} together with $K(u(s))>0$ implies
\EQ{
  \| \nabla u(s) \|^{2}_{L^{2}} <  \| \nabla W \|^{2}_{L^{2}} + 3\epsilon^{2}}
Hence
$
\| \nabla \tilde{w} (t_{j+1}) \|^{2}_{L^{2}} < \| \nabla W \|^{2}_{L^{2}}
$
and therefore by \eqref{Eqn:Charac3GdState},
\EQ{
 K(\ti w(t_{j+1}))>0.}

Hence by the result of Kenig and Merle \cite{kenmer} below the ground state energy, \eqref{Eqn:ControlOfH1} and \eqref{nablauinfnablaW}, $\ti w$ scatters in both time directions with a uniform Strichartz bound:
\EQ{
 \|\ti w\|_{W^1(\R)} < C(\y).}
In order to control $u$ by this, we use the long-time perturbation \cite[Theorem 2.14]{kenmer}:
\begin{lem}[\cite{kenmer}] \label{res:perturbation}
Let $u$ be a solution of \eqref{Eqn:SchrodCrit}. Let $I$ be an interval with some $t_0\in I\cap I(u)$. Let $e\in N^1(I)$ and let $\ti u\in C(I;\dot H^1)$ be a solution of
\EQ{
i \p_{t} \ti u - \triangle \ti u = |\ti u|^{4} \ti u + e.
}
Assume that for some $B_1,B_2,B_3>0$
\EQ{
 \|\ti u\|_{L^\I_t \dot{H}^{1}(I)}  \leq B_1,
 \pq \| \tilde{u} \|_{S(I)}  \leq B_2,
 \pq \| \tilde{u}(t_{0}) - u(t_{0}) \|_{\dot{H}^{1}}  \leq B_3.}
Then there exists $\nu_P=\nu_P(B_1,B_2,B_3) >0$ such that if
\EQ{
 \| e^{-i (t -t_{0})  \De} (\tilde{u}(t_{0}) - u(t_{0})) \|_{S(I)}
 +\| e \|_{N^1(I)} =:\nu\le\nu_P,}
then $ I \subset I(u)$ and
\EQ{ \label{Eqn:BoundPrecSI}
  \|\tilde{u} \|_{S(I)}  \lec 1,
  \pq \| \tilde{u} - u \|_{L_{t}^{\infty} \dot{H}^{1} (I)} \lec \nu+B_3,}
where the implicit constants depend on $B_1,B_2,B_3$.
\end{lem}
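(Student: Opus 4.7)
The plan is the standard Kenig--Merle long-time perturbation argument: reduce to a short-time perturbation statement by subdividing $I$ into finitely many pieces on which the Strichartz norm of $\ti u$ is small, and then iterate from $t_0$ outward.

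First I would fix a small absolute constant $\y_0>0$ (to be chosen depending only on the Strichartz constants and the embedding $W^1\subset S$ in \eqref{Eqn:RelSJWJ}) and use $\|\ti u\|_{S(I)}\le B_2$ together with the additivity of the $L^{10}_{t,x}$ norm to partition $I$ into $N\lec (B_2/\y_0)^{10}$ consecutive intervals $I_j=[s_j,s_{j+1}]$ with $s_0=t_0$ and $\|\ti u\|_{S(I_j)}\le \y_0$. Writing $w:=u-\ti u$, the function $w$ satisfies
\EQ{
 i\p_t w-\De w = |\ti u+w|^4(\ti u+w)-|\ti u|^4\ti u - e,}
with initial data $w(s_j)$ on each $I_j$. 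On each such interval I would set up a Strichartz/contraction argument: the Strichartz inequality \eqref{Strich} applied to the Duhamel formula gives
\EQ{
 \|w\|_{(L^\I_t\dot H^1\cap W^1)(I_j)}
 \lec \|e^{-i(t-s_j)\De}w(s_j)\|_{(L^\I_t\dot H^1\cap W^1)(I_j)}+\|e\|_{N^1(I_j)} \prq+ \bigl\||\ti u+w|^4(\ti u+w)-|\ti u|^4\ti u\bigr\|_{N^1(I_j)},}
and the difference of nonlinearities can be controlled pointwise by $O(|\ti u|^4|w|+|w|^5)$, with a corresponding chain-rule bound for the gradient. Choosing $\y_0$ small enough that $\|\ti u\|_{S(I_j)}^4$ is negligible compared to the Strichartz constant, a standard bootstrap closes on each $I_j$ and yields
\EQ{
 \|w\|_{(L^\I_t\dot H^1\cap W^1)(I_j)} \lec \|e^{-i(t-s_j)\De}w(s_j)\|_{S(I_j)}+\|e\|_{N^1(I_j)},}
as long as the right-hand side is smaller than some absolute $\y_1\in(0,\y_0)$.

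Next I would iterate over $j=0,1,\dots,N-1$. The key step is to propagate the smallness hypothesis on the free evolution of $w(s_j)$: using Strichartz and the estimate just obtained on $I_{j-1}$, one can bound
\EQ{
 \|e^{-i(t-s_{j+1})\De}w(s_{j+1})\|_{S(I_{j+1})}
 \lec \|e^{-i(t-s_j)\De}w(s_j)\|_{S(I_j)} +\|e\|_{N^1(I_j)}+\|w\|_{W^1(I_j)}^5+\|\ti u\|_{S(I_j)}^4\|w\|_{W^1(I_j)},}
again by Duhamel. This yields a recursion of the form $a_{j+1}\le C_0 a_j+C_0\nu$ for $a_j:=\|e^{-i(t-s_j)\De}w(s_j)\|_{S(I_j)}+\|e\|_{N^1(I)}$, hence $a_j \le (2C_0)^j(a_0+\nu)$. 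Taking $\nu_P:=\nu_P(B_1,B_2,B_3)$ so small that $(2C_0)^N(B_3+\nu_P)\ll \y_1$ forces $a_j<\y_1$ for every $j$, keeping the short-time argument applicable on every subinterval. Summing the resulting estimates over $j$ and using $w(t_0)=\ti u(t_0)-u(t_0)$ gives the claimed bounds \eqref{Eqn:BoundPrecSI}; the inclusion $I\subset I(u)$ follows because $u=\ti u+w$ has $S(I)$ norm bounded by $\|\ti u\|_{S(I)}+\|w\|_{W^1(I)}\lec 1$ via \eqref{Eqn:RelSJWJ}, so the blow-up criterion is never met.

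The main technical obstacle is the book-keeping of the iteration: the constants multiply geometrically in $N$, so the smallness threshold $\nu_P$ depends in a doubly-exponential way on $B_1,B_2,B_3$, and one must be careful that the short-time perturbation on $I_j$ remains valid at every step, which forces $\y_0$ to be chosen before $N$ and $\nu_P$ to be chosen after. A secondary subtlety, which is the only place where the critical regularity bites, is the estimation of $\||\ti u+w|^4(\ti u+w)-|\ti u|^4\ti u\|_{N^1}$: since the chain rule on $|z|^4 z$ produces terms without an overall factor of $|w|$, one needs to split the derivative between $\ti u$ and $w$ carefully so that every summand has at least one $S$-type factor which can be made small by the choice of $\y_0$. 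Everything else is a direct application of the Strichartz estimates \eqref{Strich} already recalled in the paper.
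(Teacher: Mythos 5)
The paper does not prove this lemma; it is imported from Kenig--Merle (\cite{kenmer}, Theorem 2.14), and your outline is exactly the standard long-time perturbation argument used there, so the strategy is the right one. Two concrete points in your write-up do need repair, though. First, your nonlinear estimate on each $I_j$ cannot be closed from the stated hypotheses alone: the chain rule applied to $|\ti u+w|^4(\ti u+w)-|\ti u|^4\ti u$ produces terms such as $|\ti u|^3 w\,\na\ti u$, which must be placed as $\|\ti u\|_{S(I_j)}^3\|w\|_{S(I_j)}\|\ti u\|_{W^1(I_j)}$, and the hypotheses only control $\|\ti u\|_{L^\I_t\dot H^1}$ and $\|\ti u\|_{S}$. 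You must first derive $\|\ti u\|_{W^1(I)}\le C(B_1,B_2)(1+\nu)$ from the equation for $\ti u$ itself (Strichartz on each subinterval where $\|\ti u\|_S$ is small, then sum over the $O((B_2/\y_0)^{10})$ pieces). As a consequence your threshold $\y_0$ cannot be an absolute constant: either you re-subdivide by the $W^1$ norm of $\ti u$ after this preliminary bound, or you keep the $S$-norm subdivision and let $\y_0$ (and the bootstrap threshold) depend on $B_1,B_2,B_3$; both are fine since $N$ and $\nu_P$ are allowed to depend on the $B_i$.

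Second, the bookkeeping conflates the quantity that is small with the quantity that is merely bounded. Your displayed short-time conclusion $\|w\|_{(L^\I_t\dot H^1\cap W^1)(I_j)}\lec \|e^{-i(t-s_j)\De}w(s_j)\|_{S(I_j)}+\|e\|_{N^1(I_j)}$ cannot hold, since the left side is at least of size $\|w(s_j)\|_{\dot H^1}\sim B_3$ while the right side is $O(\nu)$; the correct dichotomy is that $\|w\|_{S(I_j)}$ is small (of size $a_j+\nu$) while $\|w\|_{L^\I_t\dot H^1\cap W^1(I_j)}$ is only $O(B_3+\nu)$, the latter entering the estimates multiplied by small factors. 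Relatedly, your closing condition $(2C_0)^N(B_3+\nu_P)\ll\y_1$ is unsatisfiable for large $B_3$ if $\y_1$ is absolute; the propagated quantity $a_j$ starts at $a_0\le\nu$, not $B_3$, so the correct requirement is $(2C_0)^N\nu_P\ll\y_1$ with $C_0,\y_1$ (and hence $\nu_P$) depending on $B_1,B_2,B_3$, and $B_3$ should appear only in the constants and in the final bound $\|\ti u-u\|_{L^\I_t\dot H^1}\lec\nu+B_3$. With these corrections your argument is precisely the proof of the quoted result.
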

Apply the above lemma to $u$ and $\ti u:=\ti w+v$ with $I=[t_{j+1},\I)$ and initial data at $t=t_{j+1}$.
From the bounds on $\ti w$ and $v$, we have
\EQ{ \label{bd tiu}
 \pt \|\ti u\|_{L^\I_t\dot H^1} \lec 1,
 \pq \|\ti u\|_{S} \le C(\y),
 \pq \ti u(t_{j+1})-u(t_{j+1})=0,}
and, using \eqref{vdec}, there exists a large positive constant $C_*(\y)$ such that
\EQ{
 \pn\| e \|_{N^1(I)}
 \pt=\||\ti w|^4\ti w-|\ti u|^4\ti u\|_{N^1(I)}
 \pr\lec (\|\ti w\|_{W^1(I)}+\|v\|_{W^1(I)})^4\|v\|_{W^1(I)}
 \le C_*(\y)\de_M^{1/10}.}
So by imposing another smallness condition on $\de_M$:
\EQ{ \label{cond deM2}
 C_*(\y)\de_M^{1/10}\ll \nu_P(C(\y),C(\y),0),}
we can apply the above lemma. Hence there exists another large positive constant $C_{**}(\y)$ such that
\EQ{ \label{approx tiu}
 \|\ti u-u\|_{L^\I_t\dot H^1_x(t_{j+1},\I)}\le C_{**}(\y)\de_M^{1/10}.}

Since $K(\ti w)>0$, which is preserved in time because of $E(\ti w)<E(W)$,
we have, using \eqref{Eqn:DefI} and \eqref{Etiw dec},
\EQ{ \label{Eqn:Estw2GroundState}
 \| \ti w(t) \|^{6}_{L^6_x} & \leq \| W \|^{6}_{L^6_x}  - \frac{3c}{2} \eta^{3}.}
Taking $\de_M$ smaller if necessary we have
\EQ{ \label{cond deM3}
 C_{**}(\y)\de_M^{1/10} \ll \y^3.}
Hence, combining the above estimates with \eqref{Eqn:Dispersivev1} and \eqref{vdec}, and taking $\de_{M}$ smaller if necessary, we obtain
\EQ{
 \|u(t)\|_{L^6_x}^6 \le \|W\|_{L^6_x}^6 - c\y^3,}
which contradicts $\tilde{d}_{\cW} (u(t_{b})) = \de\ll\de_M$, since \eqref{cond deM3} implies $\de_M \ll \y^3$.
In conclusion, after fixing the constant $\de_M>0$ such that \eqref{cond deM2}, \eqref{epseta} and \eqref{cond deM3} hold, the smallness conditions on $\de,\ep$ in the case $\Te=+1$ are \eqref{dep small}, \eqref{bd deB+1}, \eqref{bd deB+2}, and \eqref{epseta}, which determine $\de_B$ and $\ep_B$.
\qedsymbol

\section{Solutions staying around the ground states} \label{Sect:staying}
In this section, we prove Proposition \ref{Prop:staying}.
Let $u$ be a solution of \eqref{Eqn:SchrodCrit} satisfying $u(t_0)\in\cH^{\ep_B(\de)}\cap \ti B_\de(\cW)$ for some $\de\in(0,\de_B]$ and $t_0\in I(u)$, and $t_+=T_+(u)$, namely $\ti d_\cW(u(t))<\de$ for $t\in[t_0,T_+(u))$.

If $u(t)\in\ck\cH$ and $\p_t\ti d_\cW(u(t))\ge 0$ at some $t\in[t_0,T_+(u))$, then Proposition \ref{Prop:DynEjection} implies that $\ti d_\cW(u(t))$ increases up to $\de_X>\de_B>\de$, contradicting $t_+=T_+(u)$.
Hence for all $t\in[t_0,T_+(u))$,
\EQ{
 c_D\ti d_\cW(u(t))\le\sqrt{E(u)-E(W)}\pq\text{or}\pq \p_t\ti d_\cW(u(t))<0,}
so by the mean value theorem, there are only two possibilities:
\begin{enumerate}
\item There exists $t_1\in[t_0,T_+(u))$ such that $u(t)\not\in\ck\cH$ for all $t\in[t_1,T_+(u))$.
\item $u(t)\in\ck\cH$ and $\p_t\ti d_\cW(u(t))<0$ for all $t\in[t_0,T_+(u))$.
\end{enumerate}
In the first case, if we choose the minimal $t_1$, then for $t_0\le t<t_1$, we have $\p_t\ti d_\cW(u(t))<0$.
Hence it suffices to treat the latter case, for which $t_1=T_+(u)$.
Since $\ti B_\de (\cW) \subset \ti B_{\de_X} (\cW) \subset B_{\de_L} (\cW)$, Proposition \ref{Prop:extend v} implies that $\ta\to\I$ as $t\nearrow T_+(u)$.
Apply Proposition \ref{Prop:DynEjection} backward in time from any $t\in(t_0,T_+(u))$, corresponding to $\ta\in(\ta(t_0),\I)$. Then
\EQ{
 \ti d_\cW(u(t_0)) \sim e^{\mu(\ta(t))-\ta(t_0))}\ti d_\cW(u(t)).}
Sending $t\nearrow T_+(u)$ yields $\ti d_\cW(u(t))\to 0$.
\qedsymbol

\section{Long-time behavior away from the ground states} \label{Section:LongTimeFarGd}
In this section, we prove Proposition \ref{Prop:FarFromGd}.
Let $u$ be a solution of \eqref{Eqn:SchrodCrit} satisfying $u([t_0,T_+(u))\subset\cH^{\ep}\setminus\ti B_\de(\cW)$ for some $\ep\in(0,\ep_B(\de))$.
By Remark \ref{in ckcH}, $u$ stays in $\ck\cH$, so $\Te(u)\in\{\pm 1\}$ is a constant.
Moreover, Proposition \ref{Prop:staying} implies that $t_+(\de')<T_+(u)$ for all $\de'\in[\de,\de_B]$, so Proposition \ref{Prop:OnePassLemma} yields $t_1\in I(u)$ such that
\EQ{
 u([t_1,T_+(u))) \subset \cH^\ep \setminus \ti B_{\de_B}(\cW).}
Without losing generality, we may assume $t_1=0$ by time translation.

\subsection{Blow-up after ejection}
In the case of $\Te(u)=-1$ and $u_0 \in H^1\rad$, we prove that $T_{+}(u) < \infty$.
Let $m \gg 1$. We rewrite \eqref{Eqn:RHS} in the following way
\EQ{ \label{Eqn:OgawTsut}
 \dot \sV_m = 2 K(u) - 2\LR{|u_r |^{2},f_{0,m}}
  + \frac 12\LR{|u/m|^2,f_{1,m}}  + 2\LR{|u|^{6},f_{2,m}}}
with
\EQ{
 \pt f_{0} := 1 - \phi - r\p_r\phi,
 \pq f_{1} := - \De (r\p_r+3)\phi,
 \pq f_{2} := 1 - \phi - r\p_r\phi/3.}
By the property of $\phi$, we have $\supp f_{j,m}\subset\{m\le |x|\le 2m\}$ and  $0\le f_{2,m}\le f_{0,m}$.
Hence using \eqref{ext GN} and the $L^2$ conservation, we obtain
\EQ{ \label{Eqn:FubInteg2}
 \LR{|u|^6,f_{2,m}}  \pt\lec  \int_{m}^{\infty} f_{0,m}(r) |u|^{6} r^2 dr
 \pr= \int_{m}^{\infty} \int_{m}^{r} f_{0,m}' (s) ds |u|^6  r^2 dr
 \pn\sim \int_{m}^{\infty} f_{0,m}'(s) \| u \|^{6}_{L^{6}(|x|> s )}ds
 \pr\lec \int_{m}^{\infty} f_{0,m}'(s) \frac{1}{s^{4}} \| \nabla u \|^{2}_{L^{2}(|x|>s)} \| u \|_{L^{2}(|x|>s)}^4 ds
 \pr\lec \frac{\|u_0\|_{L^2}^4}{m^4} \int_m^\I f_{0,m}|u_r|^{2} dx
 \sim \frac{\|u_0\|_{L^2}^4}{m^4}\LR{|u_r|^2,f_{0,m}}.}
We also have $\int_{\R^3} |f_{1,m}(r)| |u/m|^{2} dx \lec m^{-2}\|u_0\|_{L^2}^2$. Hence for $m\gg \|u_0\|_{L^2}/\ka(\de_B)$ and $0<t<T_+(u)$ we have
\EQ{ \label{Vm bd}
 -\dot\sV_m(t) \ge -K(u(t)) \ge \ka(\de_B)>0.}
Now assume for contradiction that $u$ exists for all time $t>0$, namely $T_{+}(u)= \infty$.
Then choosing $m\gg \|u_0\|_{L^2}/\ka(\de_B)$, we have from \eqref{Vm bd}
\EQ{
 m \|u_r (t) \|_{L^{2}}\|u_0\|_{L^2} & \gtrsim - \sV_m(t) \to \infty,}
as $t \to \infty$, hence
\EQ{
 -K(u(t)) & = -6 E(u) + 2 \|u_r(t) \|^{2}_{L^{2}} \to  \infty.}
So one can choose $T_{1} > 0$ such that $ -K(u(t))  \sim  \|u_r (t) \|^{2}_{L^{2}}$ for $ t \geq T_{1}$. Hence
\EQ{
 m \|u_r(T_{2}) \|_{L^{2}} & \geq - m \|u_r(T_{1}) \|_{L^{2}} + \frac{c}{\|u_0\|_{L^2}} \int_{T_{1}}^{T_{2}} \|u_r(t) \|^{2}_{L^{2}}dt }
for $T_{2} \geq T_{1}$ and some absolute constant $c\in(0,1)$.
Therefore, defining
\EQ{
 f(t):=  - m \|u_r(T_1) \|_{L^{2}} + \frac{c}{\|u_0\|_{L^2}} \int_{T_{1}}^{t} \|u_r(s) \|^{2}_{L^{2}}ds,}
we see that for large $t>T_1$, $f(t)$ is positive and $\p_t f(t) \gtrsim f(t)^{2}$.
Integrating this differential inequality yields a singularity and therefore blow-up in finite time.

\subsection{Scattering after ejection}
In the case of $\Te(u)=+1$, the proof of scattering uses arguments from \cite{kenmer} with arguments from \cite{nakaschlagschrod}.
Unlike the subcritical case, we have to take account of the scaling parameter and the fact that the maximal time interval of existence might be finite, even though the $\dot{H}^{1}$ norm is bounded by \eqref{unif bd Te+}.

We recall the following result proved by Keraani \cite{keraani} using a concentration compactness procedure, cf.~\cite{bahger,lions,merlevega}.
Since we are dealing with radial solutions only, we restrict it to the radial case.
\begin{lem}[\cite{keraani}]
Let $\{ v_{0,n} \}_{n \geq 1} $ be a bounded sequence in $\dot{H}^{1}\rad$. Then, passing to a subsequence, there exist sequences $\{ V^{j} \}_{j \geq 0} \subset\dot{H}^{1}\rad$ and $\{(\si_{j,n}, t_{j,n})\}_{j\ge 0,n\ge 1}\subset \R^2$ with the following properties. For each $j \neq j'$
\EQ{ \label{Eqn:OrthCond}
 \lim_{n\to\I}|\si_{j,n}-\si_{j',n}| + |e^{-2\si_{j,n}}(t_{j,n} - t_{j',n})|=\I.}
For $\ga_{k,n}(t,x)$ defined by
\EQ{ \label{Eqn:VonDecomp}
 e^{- i t \De} v_{0,n} & = \sum_{j=0}^{k} e^{-i(t+t_{j,n}) \De}S_{-1}^{-\si_{j,n}} V_{j} + \gamma_{k,n},}
we have
\EQ{ \label{Eqn:GammaStrich}
 \lim_{k \to \infty} \limsup_{n \to \infty} \| \gamma_{k,n} \|_{S} & =0.}
For all $k$ and as $n\to\I$,
\EQ{ \label{Eqn:KinetNrj}
  \|v_{0,n} \|_{\dot{H}^{1}}^{2}
  =\sum_{j=0}^{k} \| V_{j} \|^{2}_{\dot{H}^{1}} + \| \gamma_{k,n}(0) \|^{2}_{\dot{H}^{1}} +o(1)}
and, putting $s_{j,n}:=t_{j,n}e^{-2\si_{j,n}}$,
\EQ{ \label{Eqn:SeparNrj}
 E(v_{0,n}) = \sum_{j=0}^{k} E( e^{ - i s_{j,n}\De } V_{j}) + E(\gamma_{k,n}(0)) +o(1).}
\end{lem}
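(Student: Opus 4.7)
The plan is to follow the Bahouri--G\'erard / Keraani profile extraction strategy: establish an \emph{inverse Strichartz principle} that converts a lower bound on $\|e^{-it\Delta}\psi\|_{S}$ into a nontrivial weak limit after a suitable scaling and time translation, then iterate it on successive residues and deduce the orthogonality and Pythagorean identities by soft functional analysis. The simplification permitted by radiality is that no spatial translation parameter enters: only the scaling $\si_n$ and time shift $t_n$ appear.

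The core is the inverse Strichartz step: for any bounded $\{\psi_n\}\subset\dot H^1\rad$ with $\liminf_n\|e^{-it\Delta}\psi_n\|_S\ge \eta>0$, I would produce $V\in\dot H^1\rad\setminus\{0\}$ and $(\si_n,t_n)\in\R^2$ such that, along a subsequence, $S_{-1}^{\si_n}e^{it_n\Delta}\psi_n\rightharpoonup V$ weakly in $\dot H^1$, with a quantitative lower bound $\|V\|_{\dot H^1}\gec \eta^\alpha\|\psi_n\|_{\dot H^1}^{1-\alpha}$ for some absolute $\alpha\in(0,1)$. The standard argument runs as follows: dyadically decompose the evolution, interpolate $\|e^{-it\Delta}P_N\psi_n\|_{L^{10}_{t,x}}$ between $L^{10/3}_{t,x}$ (Strichartz) and $L^\I_{t,x}$ (via the radial Strauss embedding $\|e^{-it\Delta}P_N f\|_{L^\I_x}\lec N^{1/2}\|f\|_{\dot H^1}$) to extract a single concentration scale $N_n=:e^{\si_n}$, single out a time $t_n$ where the $L^{10}_x$ norm at that scale is nearly maximal, and take $V$ as the weak limit of $S_{-1}^{\si_n}e^{it_n\Delta}\psi_n$.

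Starting from $\psi_n^{(0)}:=v_{0,n}$ and iterating via $\psi_n^{(j+1)}:=\psi_n^{(j)}-e^{-it_{j,n}\Delta}S_{-1}^{-\si_{j,n}}V_j$, the orthogonality \eqref{Eqn:OrthCond} follows by contradiction: if some pair $j<j'$ violated it, a unitary change of variables would identify the extraction procedure for $V_{j'}$ with one performed at stage $j+1$, where the relevant weak limit was removed by construction, forcing $V_{j'}=0$. The kinetic identity \eqref{Eqn:KinetNrj} is then obtained by expanding $\|\cdot\|_{\dot H^1}^2$ on \eqref{Eqn:VonDecomp} at $t=0$: every cross term reduces, under a unitary change of variables, to an inner product of the form $(\nabla V \mid \nabla e^{-i\ta_n\Delta}S_{-1}^{-\rho_n}V')$ with $|\rho_n|+|\ta_n|\to\I$, which vanishes by density in $\dot H^1$ and \eqref{Lp decay}. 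Consequently $\sum_j\|V_j\|_{\dot H^1}^2\le \sup_n\|v_{0,n}\|_{\dot H^1}^2<\I$, so $\|V_j\|_{\dot H^1}\to 0$ as $j\to\I$; applying the inverse Strichartz bound contrapositively to $\ga_{k,n}$ then yields \eqref{Eqn:GammaStrich}.

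For the energy identity \eqref{Eqn:SeparNrj}, the kinetic piece is handled as above and only the $L^6$ part is delicate. Expanding $\|\sum_j e^{-it_{j,n}\Delta}S_{-1}^{-\si_{j,n}}V_j\|_{L^6}^6$, each mixed sextic term is controlled by H\"older together with \eqref{Lp decay}: \eqref{Eqn:OrthCond} forces either sharp dispersive decay in time (when $e^{-2\si_{j,n}}|t_{j,n}-t_{j',n}|\to\I$) or asymptotically disjoint pointwise concentration (when $|\si_{j,n}-\si_{j',n}|\to\I$), so every off-diagonal term is $o(1)$. The $\ga_{k,n}$ contribution is absorbed via $\|\ga_{k,n}\|_{L^6}\lec\|\ga_{k,n}\|_{\dot H^1}$ together with the uniform $\dot H^1$ bound, passing $n\to\I$ before $k\to\I$; reindexing the time shift into the profile via $s_{j,n}=e^{-2\si_{j,n}}t_{j,n}$ then produces \eqref{Eqn:SeparNrj}. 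The principal analytic obstacle is the inverse Strichartz step, where all the dispersive content enters; the remaining steps are essentially orthogonality bookkeeping.
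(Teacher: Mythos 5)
You should first note that the paper does not prove this lemma at all: it is quoted (restricted to radial data) from Keraani \cite{keraani}, cf.\ \cite{bahger}, so there is no internal proof to compare with, and what you propose is an outline of the standard extraction argument of the cited reference. Your outline is broadly the right strategy (Keraani's inverse step actually runs through a refined Sobolev/Besov inequality of G\'erard--Meyer--Oru \cite{gerardmeyer} rather than the Strichartz/Bernstein interpolation you sketch, but these are interchangeable). Two points are asserted rather than proved: the absence of spatial translation parameters requires the usual radial argument (cores $x_n$ with $e^{\si_n}|x_n|\to\I$ are excluded because radial symmetry would produce arbitrarily many disjoint copies of the bubble, contradicting the uniform $\dot H^1$ bound, while cores with $|x_n|\lec e^{-\si_n}$ can be recentered at the origin without losing the nontrivial weak limit); and in \eqref{Eqn:KinetNrj} you only treat profile--profile cross terms, whereas the profile--remainder terms $\langle V_j, S_{-1}^{\si_{j,n}}e^{it_{j,n}\De}\ga_{k,n}(0)\rangle_{\dot H^1}$ are killed by the defining property of the extraction, namely that the rescaled, time-translated remainder converges weakly to $0$ in $\dot H^1$.

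The genuine gap is your treatment of \eqref{Eqn:SeparNrj}. The identity is claimed for every fixed $k$ as $n\to\I$, and $E(\ga_{k,n}(0))$ stays on the right-hand side, so nothing about the remainder is to be ``absorbed''; what must be shown is that the mixed terms in the expansion of the $L^6$ norm coupling a profile with $\ga_{k,n}(0)$ vanish as $n\to\I$. Your mechanism --- the uniform bound $\|\ga_{k,n}\|_{L^6}\lec\|\ga_{k,n}\|_{\dot H^1}\lec 1$ together with ``passing $n\to\I$ before $k\to\I$'' --- cannot do this: a uniform $O(1)$ bound yields $O(1)$ cross terms, at fixed $k$ the remainder is small in no norm ($S$-smallness holds only in the double limit, and in any case does not control $\|\ga_{k,n}(0)\|_{L^6_x}$ at the fixed time $t=0$). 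The correct argument is: if $s_{j,n}\to\pm\I$ the profile piece itself vanishes in $L^6$ (dispersive decay on a dense class), so only indices with $s_{j,n}\to s_{j,\I}\in\R$ matter; for those, rescale to the $j$-th frame, use that $S_{-1}^{\si_{j,n}}e^{it_{j,n}\De}\ga_{k,n}(0)\rightharpoonup 0$ in $\dot H^1$ and $e^{-is_{j,n}\De}\to e^{-is_{j,\I}\De}$ strongly to conclude $\ti\ga_n:=S_{-1}^{\si_{j,n}}\ga_{k,n}(0)\rightharpoonup 0$, and then estimate the cross terms, bounded by $\int |W|^{6-m}|\ti\ga_n|^m dx$ with $W:=e^{-is_{j,\I}\De}V_j$ and $1\le m\le 5$: the $m=1$ term vanishes by weak convergence, and for $2\le m\le 5$ one approximates $W$ in $L^6$ by bounded compactly supported functions and uses the compactness of $\dot H^1(\R^3)\hookrightarrow L^m_{\mathrm{loc}}$ for $m<6$. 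With this replacement (and the same weak-vanishing input in the kinetic identity), your outline matches the proof in the cited reference.
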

For any $A<E(W)+\ep_S^2$ and any $\de\in(0,\de_B]$, let $\sS(A,\de)$ be the collection of solutions of \eqref{Eqn:SchrodCrit} such that
\EQ{
 E(u)\le A, \pq u([0,T_+(u))) \subset \ck\cH\setminus\ti B_{\de}(\cW),
 \pq \Te(u(0))=+1.}
Since $u$ stays in $\ck\cH$ for $0\le t<T_+(u)$, $\Te(u(t))=+1$ is preserved.

It is well known that $u\in S(0,\I)$ implies the scattering as $t\to\I$, see \cite{cazbook} or \cite[Remark 2.15]{kenmer}.
Define the minimal energy where uniform Strichartz bound fails.
\EQ{
 \pt S(A,\de):=\sup_{u\in\sS(A,\de)}\|u\|_{S(0,T_+(u))},
 \pr E_c(\de):=\sup\{A<E(W)+\ep_S^2 \mid S(A,\de)<\I\}.}
Notice that $S(A,\de)\lec A^{1/2}$ holds for $0<A \ll 1$, by the small data scattering (see \cite{cazbook} for example).
Moreover, the result of \cite{kenmer} implies $E_c(\de)\ge E(W)$.
If $E_{c}(\de)<E(W)+\ep_S^2$, there exists a sequence of solutions $u_{n}\in\sS(A_n,\de)$ for some sequence of numbers $A_n\to E_c(\de)$ such that
\EQ{ \label{Sbup un}
 \| u_{n} \|_{S( 0 , T_{+}(u_{n}))} \to \I.}

Next we prove the existence of a critical element:
\begin{lem} \label{Claim:Crit}
Let $\delta \in (0,\delta_{B})$. Suppose that $E_c(\de)\le E(W)+\ep^2$ for some $\ep$ such that 
\EQ{
 0< \ep < \min(\ep_V(\de),\ep_B(\de_B)), \pq \ep \ll \min(\ep_S,\de,\sqrt{\kappa(\de)}).}
Let $A_n\to E_c(\de)$ and $u_n\in\sS(A_n,\de)$ satisfying \eqref{Sbup un}.
Then there exist $U_{c}\in\sS(E_c(\de),\de_B)$ satisfying $E(U_{c})= E_{c}(\de)$ and $ \| U_{c} \|_{S(0, T_{+}(U_{c}))} = \infty$, and $(\si_n,s_n)\in\R^2$ such that $e^{is_n\De}S_{-1}^{\si_n}u_n(0)$ is strongly convergent in $\dot H^1\rad$.
\end{lem}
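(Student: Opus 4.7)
The plan follows the Kenig--Merle concentration-compactness scheme \cite{kenmer}, with additional care to accommodate the scaling symmetry and the geometric restriction that the extracted critical element stay in $\ck\cH\setminus\ti B_{\de_B}(\cW)$ with $\Te=+1$. First I apply Keraani's profile decomposition to $v_{0,n}:=u_n(0)$, which is bounded in $\dot H^1\rad$ by \eqref{unif bd Te+}, producing profiles $V^j\in\dot H^1\rad$ and parameters $(\si_{j,n},t_{j,n})$ satisfying \eqref{Eqn:OrthCond}--\eqref{Eqn:SeparNrj}. After extracting subsequences, each shifted time $s_{j,n}:=t_{j,n}e^{-2\si_{j,n}}$ converges in $[-\infty,\infty]$, and to every profile I associate a \emph{nonlinear profile} $U^j$, namely the solution of \eqref{Eqn:SchrodCrit} satisfying $\|U^j(s_{j,n})-e^{-is_{j,n}\De}V^j\|_{\dot H^1}\to 0$, constructed via local well-posedness when the limit is finite and via wave operators when it is $\pm\infty$.

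Next I show that exactly one profile is nontrivial. By \eqref{Eqn:SeparNrj} together with $E(u_n)\le A_n\to E_c(\de)$, the limits $\limsup_n E(U^j)$ sum to at most $E_c(\de)$. Assuming for contradiction that at least two profiles survive, each satisfies $\limsup_n E(U^j)<E_c(\de)$. For those with $E(U^j)<E(W)$, the Kenig--Merle theory \cite{kenmer} gives a uniform bound $\|U^j\|_S<\infty$. For those with $E(U^j)\in[E(W),E_c(\de))$, the same conclusion follows from the definition of $E_c(\de)$ provided the orbit of $U^j$ stays outside $\ti B_\de(\cW)$ with $\Te(U^j)=+1$; otherwise I split the orbit, apply the one-pass lemma (Proposition~\ref{Prop:OnePassLemma}) so that at most two exterior pieces arise together with one interior piece, use $E_c$ on each exterior piece, and control the interior piece by the ejection lemma (Proposition~\ref{Prop:DynEjection}) combined with local well-posedness. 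Summing these $S$-norm bounds via the asymptotic orthogonality \eqref{Eqn:OrthCond} and the long-time perturbation lemma (Lemma~\ref{res:perturbation}) yields $\|u_n\|_{S(0,T_+(u_n))}\lesssim 1$ for large $n$, contradicting \eqref{Sbup un}.

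Hence only the profile $V^0$ survives. A standard argument using energy minimality at $E_c(\de)$ together with \eqref{Eqn:GammaStrich} upgrades this to strong $\dot H^1$-convergence of the remainder $\gamma_{0,n}(0)$; setting $(\si_n,s_n):=(\si_{0,n},s_{0,n})$ and using the identity $e^{is\De}S_{-1}^{\si}=S_{-1}^{\si}e^{ise^{2\si}\De}$, I conclude $e^{is_n\De}S_{-1}^{\si_n}u_n(0)\to V^0$ strongly in $\dot H^1\rad$. I then define $U_c$ as the nonlinear profile associated to $V^0$, time-translated so that $U_c(0)=V^0$; \eqref{Eqn:SeparNrj} forces $E(U_c)=E_c(\de)$, and $\|U_c\|_{S(0,T_+(U_c))}=\infty$ follows from forward perturbation of $u_n$ by $U_c$ combined with \eqref{Sbup un}. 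The condition $\Te(U_c(0))=+1$ passes to the limit by continuity of $\Te$ on $\ck\cH$, and the distance threshold is upgraded from $\de$ to $\de_B$: if $U_c$ were to enter $\ti B_{\de_B}(\cW)$ at some time, Propositions~\ref{Prop:DynEjection} and \ref{Prop:OnePassLemma} would force either permanent approach to $\cW$ (ruled out by $\|U_c\|_S=\infty$) or rapid exit past $\de_X>\de_B$, and a perturbative transfer back to $u_n$ would then contradict $u_n\in\sS(A_n,\de)$.

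The main obstacle is the multi-profile case in the second step. The definition of $E_c(\de)$ only controls the Strichartz norm of solutions that remain outside $\ti B_\de(\cW)$ with $\Te=+1$, whereas nonlinear profiles whose asymptotic energy lies in $[E(W),E_c(\de))$ need not satisfy these hypotheses. Securing uniform $S$-norm bounds on them requires patching together the one-pass, ejection, and below-ground-state pieces, carefully tracking the scaling parameters $\si_{j,n}$ in the perturbation argument, and verifying that the geometric data (signs of $\Te$, distance thresholds) pass correctly through the limiting procedure.
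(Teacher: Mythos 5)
Your overall architecture (profile decomposition, nonlinear profiles, long-time perturbation, extraction of a minimal element, transfer of the geometric constraints back to $u_n$) matches the paper, but two steps as you have written them do not work.

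First, you never use the variational information coming from the hypothesis $u_n([0,T_+(u_n)))\subset\ck\cH\setminus\ti B_\de(\cW)$ with $\ep<\ep_V(\de)$, and without it the energy bookkeeping collapses. This is a \emph{focusing} problem: the profile energies $E(U^j)$ and $E(\gamma_{k,n})$ have no sign a priori, so \eqref{Eqn:SeparNrj} alone does not give ``the $\limsup_n E(U^j)$ sum to at most $E_c(\de)$'' in any useful sense, nor that each nontrivial profile has energy $<E_c(\de)$, nor that the non-surviving pieces are small. The paper's key step is to apply Proposition \ref{Prop:VarEst} to $u_n$ (giving $K(u_n(0))\gec\ka(\de)\gg\ep^2$), pass this through the functional $G=\|\na\cdot\|_{L^2}^2/3$, which decomposes along \eqref{Eqn:KinetNrj}, to get $G(e^{-it\De}V_j),\,G(\ga_{k,n})\le E(W)-\ep^2$, and then use \eqref{Eqn:Charac3GdState} to conclude $K\ge 0$, hence $E\ge 0$ and $E\sim\|\na\cdot\|_{L^2}^2$, for every profile and remainder. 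Everything downstream in your sketch (only one nontrivial profile, smallness of the others, the final strong $\dot H^1$ convergence, the transfer of $\Te=+1$) silently relies on this positivity; it must be proved, not assumed.

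Second, your treatment of profiles with energy in $[E(W),E_c(\de))$ is a wrong approach. The inductive hypothesis (the definition of $E_c(\de)$) only applies to solutions in the class $\sS(\cdot,\de)$, i.e.\ staying outside $\ti B_\de(\cW)$ with $\Te=+1$, and your proposed repair --- split the orbit by the one-pass lemma and control the near-$\cW$ piece ``by the ejection lemma combined with local well-posedness'' --- fails: a solution may linger near $\cW$ for arbitrarily long times or even blow up there (the equation is energy-critical, and the paper explicitly notes blow-up inside the neighborhood of $\cW$ is not excluded), and an ejected piece with $\Te=-1$ blows up, so no uniform Strichartz bound for such a profile can be extracted this way. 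The paper avoids invoking $E_c$ on profiles altogether: assuming all $U_j$ have finite $S$-norm gives a contradiction with \eqref{Sbup un}, so some $U_0$ has infinite $S$-norm; since below-$E(W)$ profiles with $K\ge0$ scatter by \cite{kenmer}, necessarily $E(U_0)\ge E(W)$, and then positivity forces all other profiles plus the remainder to be $O(\ep)$-small (\eqref{Eqn:UnifBound}), hence scattering by small-data theory. The definition of $E_c$ is used only at the very end, applied to the constructed $U_c$, to force $E(U_c)=E_c(\de)$ and $U_j=0$ for $j\ge1$. Two smaller points: the case analysis in $s_{0,\infty}$ is needed to place the infinite $S$-norm on a forward interval of $U_0$ (your ``time-translate so that $U_c(0)=V^0$'' is meaningless when $s_{0,\infty}=\pm\infty$); and you cannot rule out the scenario $\ti d_\cW(U_0(t))\to$ small by ``$\|U_c\|_S=\infty$'' (e.g.\ $W$ itself has infinite $S$-norm while sitting on $\cW$) --- the paper excludes it by transferring the proximity back to $u_n$ via \eqref{app un by U0}, contradicting $\ti d_\cW(u_n(t))\ge\de$.
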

Note that once we have $U_c\in \sS(E_c(\de),\de)$ with the other properties, then a time translation yields another minimal element in $\sS(E_c(\de),\de_B)$ as a consequence of the ejection and the one-pass lemmas (see the proof below for the detail). 
\begin{proof}
Notice that, by the small data scattering, cf.~\cite[Remark 2.7]{kenmer}, we must have $ \|u_{n} \|_{\dot H^1} \gtrsim 1 $, otherwise $\|u_n\|_{S(0,\I)}$ are uniformly small.
Hence using \eqref{unif bd Te+} as well, we have for all $n$ and $t\in[0,T_+(u_n))$,
\EQ{
 \|u_n(t)\|_{\dot H^1}\sim 1.}

We then apply \eqref{Eqn:VonDecomp} to $v_{0,n}:= u_{n}(0)$. Then we have, up to a subsequence,
\EQ{ \label{Eqn:ProfDecompun}
 e^{- i t \De} u_{n}(0) & = \sum_{j=0}^{k} e^{- i t \De} S_{-1}^{-\si_{j,n}}e^{- i  s_{j,n}\De } V_{j} + \gamma_{k,n}.}
Let $s_{j,\infty} \in [- \infty, \infty]$ such that $s_{j,n} \to s_{j,\infty}$ (up to a subsequence), and let $U_{j}$ be the nonlinear profile associated with $(V_{j}, \{ s_{j,m} \}_{m \geq 1})$, that is the unique solution of \eqref{Eqn:SchrodCrit} around $t=s_{j,\infty}$ satisfying (see \cite{kenmer} for more detail),
\EQ{ \label{Eqn:DfnNonlinearProf}
 \lim_{ m \to \infty }  \| U_{j}(s_{j,m}) -  e^{-i s_{j,m} \De} V_{j} \|_{\dot{H}^{1}} & =0.}
We also define $U_{j,n}(t):= S_{-1}^{-\si_{j,n}}  U_{j}( (t + t_{j,n}) e^{-2\si_{j,n}} )$.
Since $\ep<\ep_V(\de)$, Proposition \ref{Prop:VarEst} implies that $K(u_n(t)) \gec \kappa(\de)\gg \ep^2$ for all $n\ge 1$ and $t\ge 0$.
Then using \eqref{Eqn:KinetNrj} and the conservation of $G$ for the free equation, we have
\EQ{ \label{Eqn:DecompEG}
 E(W) - \epsilon^{2} & > E(u_{n}) - K(u_{n}(0))/6 + \epsilon^{2} \\
& = G(u_{n}(0)) + \epsilon^{2}
 \geq \sum_{j=0}^{k} G(V_{j}) + G(\gamma_{k,n}).}
This implies that $G(e^{-it \De} V_{j}) \leq E(W)- \epsilon^{2}$ and
$G(\gamma_{j_{k,n}}) \leq E(W) - \epsilon^{2}$.
By \eqref{Eqn:Charac3GdState}, $K(\gamma_{k,n}(t))\ge 0$ and so $E(\gamma_{k,n}(t))\ge 0$ for all $t\in\R$.
Similarly, $ K(e^{-it \De} V_{j})\ge 0 $ for all $t\in\R$, and $K(e^{-it\De}V_j)\to\|V_j\|_{\dot H^1}^2$ as $t\to\pm\I$, and both can be zero only if $V_j=0$.
Hence by \eqref{Eqn:DfnNonlinearProf}, $K(U_{j}(t)) \ge 0$ in a neighborhood of $s_{j,\infty}$, and $ E(U_{j}) \ge 0$.

By \eqref{Eqn:SeparNrj} and \eqref{Eqn:DfnNonlinearProf}, we have as $n\to\I$,
\EQ{ \label{Eqn:DecompNrj}
 E(u_{n}) & =\sum_{j=0}^{k} E(U_{j}) + E(\gamma_{k,n}(0)) + o(1),}
hence we see that for all $j\ge 0$
\EQ{ \label{Eqn:BoundEUj}
 E(U_{j}) & \leq E_{c}. }
If $E(U_{j}) < E(W)$, then we conclude from $K(U_{j})\ge 0$ in a neighborhood of $s_{j,\infty}$ and \cite{kenmer} that $U_{j}$ exists globally in time and scatters with $\| U_{j} \|_{S\cap W^1\cap L^\I_t\dot H^1} < \infty$.

Assuming that $\|U_j\|_{S}<\I$ for all $j=0,1,\dots,k$,
we apply Lemma \ref{res:perturbation} to
\EQ{
 \tilde u:= \sum_{j=0}^{k} U_{j,n}+\ga_{k,n}, \pq u:=u_{n}}
from $t_{0}:=0$ on $I:=\R$.
\eqref{Eqn:RelNrjK} and \eqref{Eqn:DecompNrj} imply that $\ti u$ is bounded in $L^\I_t \dot H^1$ as $n\to\I$ uniformly in $k$.
From \eqref{Eqn:KinetNrj}, the orthogonality conditions \eqref{Eqn:OrthCond} and a similar argument to that in the proof of Proposition 4.2 in \cite{kenmer}, $\tilde{u}$ is bounded in $S$ as $n \rightarrow \infty$ uniformly in $k$.
\eqref{Eqn:DecompEG} implies that $\|\ti u(0)-u_n(0)\|_{\dot H^1}\to 0$ as $n\to\I$.
Hence, in order to apply the lemma for large $n$, it suffices to make
\EQ{
 i\p_t\ti u-\De\ti u-|\ti u|^4\ti u=\sum_{j=0}^k |U_{j,n}|^4U_{j,n}-|\ti u|^4\ti u}
small in $N^1(\R)$.
Indeed, using $\| U_{j} \|_{S\cap W^1} < \infty$ and the orthogonality conditions \eqref{Eqn:OrthCond}, as well as \eqref{Eqn:GammaStrich}, we obtain
\EQ{
  \lim_{k\to\I}\limsup_{n\to\I}\|i\p_t\ti u-\De\ti u-|\ti u|^4\ti u\|_{N^1(\R)}=0.}
For a proof, we refer again to Proposition 4.2 in \cite{kenmer} and the references therein (in particular \cite{keraani}).
Thus for large $k$ and large $n$, Lemma \ref{res:perturbation} yields a bound on $\|u_n\|_{S(\R)}$ uniform in $n$, contradicting $\|u_n\|_{S(0,T_+(u_n))}\to\I$.

This means that there exists at least one $U_{j}$ such that $E(W)\le E(U_{j})\le E_c(\de)$ and $\|U_j\|_{S(I(U_j))}=\I$, say $U_{0}$.
Then, by the orthogonality \eqref{Eqn:DecompNrj} and the positivity of $K(U_{j})\ge 0$, we see that $T_{+}(U_j) = T_{-}(U_j) = \infty$ for
$j=1,..,k$ and
\EQ{ \label{Eqn:UnifBound}
 \sum_{j=1}^{k} \| U_{j} \|^{2}_{L_{t}^{\infty} \dot{H}^{1}} + \| \gamma_{k,n} \|^{2}_{\dot{H}^{1}} & \lesssim \epsilon^{2}\ll 1,}
hence all the $U_j$, $j=1,\dots,k$, are small and scatter.
That allows us to apply Lemma \ref{res:perturbation} to $\ti u$ and $u=u_n$ on $I=I_n:=e^{2\si_{0,n}}I_0 - t_{0,n}$ for any interval $I_0\subset I(U_0)$ such that $\|U_0\|_{S(I_0)}<\I$. Then the lemma yields
\EQ{
  \limsup_{n\to\I}\|u_n\|_{S(I_n)}<\I, \pq
  \lim_{k\to\I}\limsup_{n\to\I}\|\ti u-u_n\|_{L^\I_t \dot H^1(I_n)}=0,}
since $B_3+\nu\to 0$.
Combining the second estimate with \eqref{Eqn:UnifBound} and the orthogonality \eqref{Eqn:OrthCond}, we see that
\EQ{ \label{app un by U0}
 \limsup_{n\to\I}\|u_n-U_{0,n}\|_{L^\I_t \dot H^1(I_n)} \lec \ep.}

Suppose that $s_{0,\infty} = +\infty$.
Then by definition $U_{0}$ scatters in a neighborhood of $\infty$, namely $\|U_0 \|_{S( [T_{0}, \infty))} < \infty$ for some $T_0\in I(U_0)$.
Hence, choosing $I_0=[T_0,\I)$ in the above argument yields a uniform bound on $\|u_n\|_{S(I_n)}$, but $I_n\supset [0,\I)$ for large $n$, contradicting $\|u_n\|_{S(0,T_+(u_n))}\to\I$.

Suppose that $s_{0,\infty} = - \infty$.
Then by definition $\| U_0 \|_{S(-\I,t_0)} <\infty$ for some $t_0\in I(U_0)$.
Then $U_0\not\in S(I(U_0))$ implies $\|U_0\|_{S(t_0,T_+(U_0))}=\I$.

Suppose that $s_{0,\infty} \in \mathbb{R}$.
If $\| U_{0} \|_{S (s_{0, \I},  T_{+}(U_{0}) ) } < \infty$,
then the blow-up criterion (see for example \cite{kenmer}) implies $T_{+}(U_{0}) = \infty$, and so choosing $I_0=(s_-,\I)$ for some $s_-\in (-T_-(U_0),s_{0,\I})$, we see that $I_n\supset[0,\I)$ for large $n$, leading to a contradiction with $\|u_n\|_{S(0,\I)}\to\I$ as in the case $s_{0,\I}=+\I$ above.
Therefore $\|U_0\|_{S(s_{0,\I},T_+(U_0))}=\I$.

Thus we have obtained $s_{0,\I}<+\I$ and $\|U_0\|_{S(t_0,T_+(U_0))}=\I$ for some $t_0\in(s_{0,\I},T_+(U_0))$.
Since $E(U_0)\le E_c(\de)\le E(W)+\ep^2$ and $\ep<\ep_B(\de_B)$, Propositions \ref{Prop:OnePassLemma} and \ref{Prop:staying} imply that there are only two options for $U_0$:
\begin{enumerate}
\item There exists $t_+\in I(U_0)$ such that $\ti d_\cW(U_0(t))\ge \de_B$ for all $t_+<t<T_+(U_0)$.
\item $\limsup\limits_{t\nearrow T_+(U_0)}\ti d_\cW(U_0(t))\le \ep/c_D$.
\end{enumerate}
In the second case, choosing $I_0=(t_0,t_c)\subset I(U_0)$ such that $\ti d_\cW(U_0(t_c))<2\ep/c_D$, we obtain from Proposition \ref{Prop:DistFuncEigen}
and \eqref{app un by U0} that
\EQ{
 \ti d_\cW(u_n(t_n)) \lec \ti d_\cW(U_0(t_c)) + \|u_n(t_n)-U_{0,n}(t_n)\|_{\dot H^1}\lec \ep \ll \de,}
where $t_n:=e^{2 \sigma_{0,n}}t_c-t_{0,n}>0$ for large $n$, since $t_c>s_{0,\I}$.
This contradicts $\ti d_\cW(u_n(t))\ge \de$ on $[0,T_+(u_n))$.
Therefore $\ti d_\cW(U_0(t))\ge\de_B$ for $t_{+} \le t<T_+(U_0)$.

Since $E(U_0)\le E_c(\de)\le E(W)+\ep^2$ and $\ep<\ep_B(\de_B)$, $\Te(U_0(t))$ is constant on $[t_+,T_+(U_0))$.
Let $t_n:=e^{2 \sigma_{0,n}}t_+-t_{0,n}$, then $\Te(U_{0,n}(t_n))=\Te(U_0(t_+))$ by the invariance of $\Te$.
\eqref{app un by U0} implies for large $n$ that $u_n(t_n)$ is in an $O(\ep)$ ball around $U_{0,n}(t_n)$, which is included in $\cH^{\ep_S}\cap\ck\cH$ because $\ep\ll\min(\ep_S, c_D \de)$.
Hence
\EQ{
 +1=\Te(u_n(t_n))=\Te(U_{0,n}(t_n))=\Te(U_0(t_+)).}
Therefore, putting $U_c(t):=U_0(t+\tilde{t})$ with $\tilde{t}:= \max{(t_0,t_+)}$, we obtain $U_c\in\sS(E_c(\de),\de_B)$ and $\|U_c\|_{S(0,T_+(U_c))}=\I$.
Then the definition of $E_c$ implies $E(U_c)\ge E_c(\de)$, and so $E(U_c)=E_c(\de)$.
Thus $U_c$ satisfies all the properties in the lemma.

Since $E(u_n)\to E_c=E(U_0)$ and $K(U_j)\ge 0$, by \eqref{Eqn:RelNrjK} and \eqref{Eqn:DecompNrj} we have $U_{j}=0$ for all $j \geq 1$ and $\gamma_{k,n} \to 0$ in $\dot{H}^{1}$. Hence
$u_{n} (0) = S_{-1}^{-\si_{0,n}} e^{- i s_{0,n} \De } V_0 + o(1)$ in $\dot H^1$. Hence $e^{i s_n \triangle}S_{-1}^{\sigma_n} u_n(0) \rightarrow V_0$
with $\sigma_n:= \sigma_{0,n}$ and $s_n := s_{0,n}$.
\end{proof}

The following is a corollary of the last part.
\begin{claim} \label{Claim:Precompact}
There exists $\si_c:[0, T_{+}(U_{c}))\to\R$ such that
\EQ{ \label{Eqn:TrajUcComp}
 \sK & := \{S_{-1}^{-\si_c(t)} U_c(t)\}_{0\le t< T_{+}(U_{c})}\subset\dot H^1\rad}
is precompact.
\end{claim}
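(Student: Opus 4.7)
The strategy is the standard Kenig--Merle precompactness argument: I would apply Claim \ref{Claim:Crit} to the sequence of translates $v_n(s):=U_c(s+t_n)$ in order to extract the scaling parameter $\sigma_c(t)$ from the profile decomposition.

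Given any sequence $t_n\in[0,T_+(U_c))$, the case of a bounded subsequence with limit $t_\infty<T_+(U_c)$ is handled by $\dot H^1$-continuity of $U_c$ on compact time intervals, so $\sigma_c(t_n):=0$ works. The essential case is $t_n\to T_+(U_c)^-$ (allowing $T_+(U_c)=\infty$). In that case the translates $v_n$ lie in $\sS(E_c(\delta),\delta_B)$ by $E$-conservation, constancy of $\Te$, and the hypothesis $U_c\in\sS(E_c(\delta),\delta_B)$, and they satisfy
\[
 \|v_n\|_{S(0,T_+(v_n))}=\|U_c\|_{S(t_n,T_+(U_c))}=\infty,
\]
since $\|U_c\|_{S(0,T_+(U_c))}=\infty$ while $\|U_c\|_{S(0,t_n)}<\infty$ for each $t_n<T_+(U_c)$ by local Strichartz theory. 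Claim \ref{Claim:Crit} applied to $(v_n)_{n\ge 1}$ with the constant sequence $A_n\equiv E_c(\delta)$ therefore yields, along a subsequence, parameters $(\sigma_n,s_n)\in\R^2$ and a nonzero $V_0\in\dot H^1\rad$ with
\[
 e^{is_n\Delta}S_{-1}^{\sigma_n}U_c(t_n)\longrightarrow V_0\quad\text{in}\ \dot H^1.
\]

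The main obstacle is to show that $\{s_n\}$ is bounded along a further subsequence, since otherwise $e^{-is_n\Delta}V_0$ converges only weakly in $\dot H^1$ by the dispersive decay \eqref{Lp decay}, giving no strong compactness. The case $s_n\to+\infty$ is ruled out exactly as in the proof of Claim \ref{Claim:Crit}: the associated nonlinear profile $U_0$ scatters linearly as $\tau\to+\infty$, so $\|U_0\|_{S(T_0,\infty)}<\infty$ for some $T_0$, and Lemma \ref{res:perturbation} applied to $\tilde u:=U_{0,n}+\gamma_{k,n}$ versus $v_n$ on the rescaled forward interval transfers this bound to $v_n$, contradicting $\|v_n\|_{S(0,T_+(v_n))}=\infty$. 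The harder direction $s_n\to-\infty$ requires a parallel argument: $U_0$ then scatters linearly as $\tau\to-\infty$, so $\|U_0\|_{S(-\infty,T_0)}<\infty$ for any $T_0\in I(U_0)$, and the time-rescaling identity
\[
 \|U_{0,n}\|_{S(0,T)}=\|U_0\|_{S(s_n,\,s_n+Te^{-2\sigma_n})}
\]
together with $\|v_n(0)\|_{\dot H^1}\sim\|V_0\|_{\dot H^1}\sim 1$, the uniform bound \eqref{Eqn:ControlOfH1}, and the minimality of $E_c(\delta)$ (which forbids $U_0$ itself from being a sub-critical non-scattering element on $(s_{0,\I},T_+(U_0))$) forces the integration window to remain inside $(-\infty,T_0)$ for suitable $T_0$; the same perturbation lemma then again yields a uniform Strichartz bound on $v_n$ and a contradiction.

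Having bounded $\{s_n\}$, extract $s_n\to s_\infty\in\R$ along a further subsequence to get $S_{-1}^{\sigma_n}U_c(t_n)\to e^{-is_\infty\Delta}V_0$ strongly in $\dot H^1\rad$. Setting $\sigma_c(t_n):=-\sigma_n$ gives the desired convergence along the sequence, and a standard selection on a countable dense subset of $[0,T_+(U_c))$ extends $\sigma_c$ to a function on the full interval. Precompactness of $\sK$ in $\dot H^1\rad$ then follows, since every sequence in $\sK$ admits a convergent subsequence by the preceding analysis.
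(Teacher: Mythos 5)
Your overall strategy coincides with the paper's: apply Lemma \ref{Claim:Crit} to the translates $U_c(\cdot+t_n)$ with $t_n\to T_+(U_c)$, obtain $(\sigma_n,s_n)$ with $e^{is_n\Delta}S_{-1}^{\sigma_n}U_c(t_n)\to V_0$ strongly, and reduce everything to showing that $s_n$ cannot escape to $\pm\infty$. The case where $s_n$ escapes to the side on which the associated nonlinear profile scatters is handled correctly (it is the same perturbation argument as inside the proof of Lemma \ref{Claim:Crit}). The genuine gap is in the other infinite case. There your appeal to ``minimality of $E_c(\delta)$'' and to the integration window being ``forced to remain inside $(-\infty,T_0)$'' is not a valid step: when $s_n$ diverges to the side on which the data looks like a free wave coming from far away, the nonlinear profile $U_0$ scatters on that side but may very well fail to scatter in the forward direction --- this is precisely the case that survives inside the proof of Lemma \ref{Claim:Crit} and produces the critical element in the first place, so minimality of $E_c(\delta)$ does not exclude it --- and since $\|v_n\|_{S(0,T_+(v_n))}=\infty$ while $\|U_0\|_{S(-\infty,T_0)}<\infty$, the rescaled forward windows cannot stay inside $(-\infty,T_0)$; the claim is essentially circular.

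The ingredient you are missing, and which the paper uses, is the \emph{backward} divergence $\|U_c\|_{S(0,t_n)}\to\|U_c\|_{S(0,T_+(U_c))}=\infty$ as $t_n\nearrow T_+(U_c)$ (you only record that each $\|U_c\|_{S(0,t_n)}$ is finite). If $s_n$ diverges to the problematic side, then the free evolution of $v_n(0)$ has Strichartz norm tending to $0$ on the corresponding time direction (the global $S$-norm of $e^{-it\Delta}V_0$ is finite and the time window runs off to one end), and small-data/stability theory then forces the nonlinear Strichartz norm of $U_c$ on that side to be small; depending on the sign this contradicts either $\|U_c\|_{S(t_n,T_+(U_c))}=\infty$ or $\|U_c\|_{S(0,t_n)}\to\infty$. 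The paper dispatches both infinite cases symmetrically by exactly this vanishing argument (citing Kenig--Merle), which also renders the nonlinear-profile machinery you invoke for the first case unnecessary. A secondary, fixable point: defining $\sigma_c$ only along particular sequences and then ``extending by a countable dense subset'' does not define $\sigma_c$ on all of $[0,T_+(U_c))$ with precompact image; it is cleaner to argue by contradiction from an $\eta$-separated sequence modulo scaling, as the paper does, or to prove compactness in the quotient by scaling and then choose representatives.
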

\begin{proof}
If there is no such $\si_c$, then there exists $\{ t_{n} \}_{n \geq 1} $ and $ \eta > 0$   such that
\EQ{ \label{Eqn:IfNotPrec}
 \inf_{\si\in\R} \|S_{-1}^\si U_{c}(t_{n}) -U_{c}(t_{n'})\|_{\dot{H}^{1}} & \geq \eta}
for all $n \neq n'$.
Notice that we must have, after possibly passing to a subsequence, $ t_{n} \to T_{+}(U_{c})$: otherwise, we get a contradiction from \eqref{Eqn:IfNotPrec} with $\si_0=0$ by continuity of $U_c(t)$.
Applying Lemma \ref{Claim:Crit} to $u_{n}(t):= U_{c}(t+t_{n})$ yields a sequence $(\si_n,s_n)\in\R^2$ such that
$e^{is_n\De}S_{-1}^{\si_n}U_c(t_n)$ is strongly convergent in $\dot H^1$.

After possibly passing to a subsequence, we may assume that $s_n$ converges to some $s_\I \in [-\infty, \infty ]$.
If $s_\I\in \mathbb{R}$, then $S_{-1}^{\si_n}U_c(t_n)$ is also convergent, contradicting \eqref{Eqn:IfNotPrec} for $n,n'\to\I$.
If $s_\I = - \infty$, then $\| U_{c} (t+ t_{n}) \|_{S(0, \infty)}\to 0 $, and if $s_\I = \infty$, then $\| U_{c}(t+ t_{n}) \|_{S(- \infty, 0)}\to 0$, since the free solutions with the same data as $U_c$ at $t=t_n$ are vanishing in that way: see \cite{kenmer} for more detail.
In either case, it contradicts $\| U_{c} \|_{S(0, T_{+}(U_{c}) )} = \infty$.
\end{proof}

We are now ready for the final step of the proof of Proposition \ref{Prop:FarFromGd}.
\begin{claim} \label{Claim:Notexist}
$U_{c}$ does not exist.
\end{claim}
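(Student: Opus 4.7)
The plan is to apply the Kenig-Merle \cite{kenmer} rigidity theorem to derive a contradiction. The key features of the critical element $U_c$ at our disposal are: (i) compactness modulo $\dot H^1$-scaling of $\sK$ from Claim \ref{Claim:Precompact}; (ii) the uniform bound $\|U_c(t)\|_{\dot H^1}\sim 1$ on $[0,T_+(U_c))$, since small-data scattering combined with $\|U_c\|_{S(0,T_+(U_c))}=\I$ precludes smallness, while \eqref{unif bd Te+} yields the upper bound; and (iii) the coercivity
\[
K(U_c(t))\gec\min\bigl(\ka(\de_B),\,c_V\|\na U_c(t)\|_{L^2}^2\bigr)\gec 1
\]
for all $t\in[0,T_+(U_c))$, which follows from Proposition \ref{Prop:VarEst} combined with $\ti d_\cW(U_c(t))\ge\de_B$, $E(U_c)<E(W)+\ep_S^2$, and the branch $\Te(U_c)=+1$ (so that $K(U_c)>0$). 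The argument splits into the two usual cases, $T_+(U_c)=\I$ and $T_+(U_c)<\I$.

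When $T_+(U_c)=\I$, one first uses the $\dot H^1$-scaling invariance of \eqref{Eqn:SchrodCrit}, which preserves the precompact set $\sK$, to reduce via a standard zoom-in argument of Kenig-Merle to the case where the scaling modulus $\si_c$ is bounded above on $[0,\I)$. Under this reduction, the compactness of $\sK$ together with Hardy's inequality and the Sobolev embedding $\dot H^1\hookrightarrow L^6$ furnish $R(\eta)>0$ independent of $t$ such that
\[
\int_{|x|>R(\eta)}\bigl(|\na U_c(t)|^2+|U_c(t)/r|^2+|U_c(t)|^6\bigr)\,dx<\eta\pq\text{for all } t\ge 0.
\]
Plugging into the localized virial identity \eqref{Eqn:RHS} with $m=R(\eta)$, coercivity (iii) and smallness of the error $E_m$ yield $\dot\sV_m(t)\gec 1$ for $\eta$ small enough. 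Integrating from $0$ to $T$ gives $\sV_m(T)-\sV_m(0)\gec T$, which contradicts the uniform bound $|\sV_m(t)|\lec m^2$ (obtained via $|\sV_m|\le m\|U_c\|_{L^2(|x|<2m)}\|\na U_c\|_{L^2}$ and Hardy) as $T\to\I$.

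When $T_+(U_c)<\I$, the precompactness of $\sK$ together with the blow-up $\|U_c\|_{S(0,T_+(U_c))}=\I$ and the small-data scattering criterion force $e^{\si_c(t)}\to\I$ as $t\nearrow T_+(U_c)$, so that $U_c$ concentrates at smaller and smaller scales. A concentration argument of Kenig-Merle then promotes precompactness in $\dot H^1\rad$ to the conclusion that $U_c\in L^2_x$ and $\|U_c(t)\|_{L^2(|x|>r)}\to 0$ for every $r>0$ as $t\nearrow T_+(U_c)$. But $L^2$ is conserved along \eqref{Eqn:SchrodCrit}, forcing $\|U_c\|_{L^2}\equiv 0$, i.e.\ $U_c\equiv 0$, in contradiction with $\|U_c(t)\|_{\dot H^1}\gec 1$.

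The main obstacle is the finite-time blowup subcase: proving that $U_c\in L^2_x$ together with the mass concentration requires delicate arguments going beyond pure $\dot H^1$-precompactness, in particular a frequency-localization analysis to propagate the $L^2$ integrability. The global-in-time subcase is by contrast a direct application of the localized virial identity \eqref{Eqn:RHS} once the scaling parameter $\si_c$ has been controlled, and the coercivity (iii) replaces the role played by the $E<E(W)$ hypothesis in the original Kenig-Merle rigidity theorem.
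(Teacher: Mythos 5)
Your overall strategy (rigidity via a localized virial identity in the global case, and a localized-mass/concentration argument in the finite-time case) is the paper's strategy, but two of the steps you defer to "standard" arguments are exactly where the work lies, and as written they do not close. In the case $T_+(U_c)<\I$, the statement you invoke --- that the \emph{exterior} mass $\|U_c(t)\|_{L^2(|x|>r)}$ tends to $0$ --- is both unjustified (a priori $U_c(t)$ is only in $\dot H^1$, so the exterior mass need not even be finite) and insufficient: exterior-mass vanishing together with $L^2$ conservation is perfectly compatible with a nonzero solution whose mass concentrates at the origin, so no contradiction follows from what you state. The correct mechanism is the opposite localization: since $\si_c(t)\to+\I$, the \emph{interior} localized mass $y_m(t)=\LR{|U_c(t)|^2,\phi_m}=\LR{|S_{-1}^{-\si_c}U_c|^2,e^{-2\si_c}\phi_{me^{\si_c}}}$ tends to $0$ as $t\nearrow T_+(U_c)$ (pair the precompact family $|S_{-1}^{-\si_c}U_c|^2\subset L^3_x$ against $e^{-2\si_c}\phi_{me^{\si_c}}\to0$ weakly in $L^{3/2}_x$), while $|\dot y_m|\lec\|U_c\|_{\dot H^1}^2\lec 1$ uniformly in $m$ by \eqref{Eqn:IdentLocMass}, Cauchy--Schwarz and Hardy. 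Integrating up to $T_+(U_c)$ and letting $m\to\I$ gives $\|U_c(t)\|_{L^2}^2\lec T_+(U_c)-t$, so $U_c\in L^2_x$ with conserved mass equal to $0$, hence $U_c\equiv0$. In particular no frequency-localization analysis is needed; the step you flag as the "main obstacle" is precisely the argument you have not supplied, and it is elementary once stated with the interior rather than the exterior mass.

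In the global case, the reduction of the scaling parameter is not a citation-level "zoom-in". First, the direction is reversed: with the convention $U_c(t)=S_{-1}^{\si_c(t)}(\text{compact element})$, the virial argument with a fixed cutoff $m$ requires $\si_c$ bounded \emph{below} (length scale $e^{-\si_c}$ bounded above, i.e.\ no spreading), not above. Second, when $\inf_{t\ge0}\si_c=-\I$ one must choose times $t_n\to\I$ at which $\si_c$ attains its running minimum, rescale, and pass to a limiting solution $U_\om$ backward in time, checking that this limit is again a critical element of the same class --- precompact modulo scaling, $\ti d_\cW\ge\de_B$, infinite Strichartz norm, and now with scaling parameter bounded on one side --- before running the virial/coercivity argument on its time reversal. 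Propagating the constraints $\ti d_\cW\ge\de_B$ and $K>0$ (equivalently $\Te=+1$) through this limit is exactly where the above-threshold setting departs from the below-threshold Kenig--Merle rigidity you appeal to, and your proposal does not address it. The remainder of your global-case argument (exterior smallness from compactness plus Hardy/Sobolev, $\dot{\sV}_m\gec\ti\ka$, and $|\sV_m|\lec m^2$) coincides with the paper's treatment of the sub-case $\inf\si_c>-\I$.
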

\begin{proof}
First we consider the case $T_{+}(U_{c}) < \infty$.
The local wellposedness theory, together with the precompactness of $\sK$, implies that blow-up is possible only by concentration $\si_c(t) \to \infty$ as $t\nearrow T_+(U_c)<\I$, see \cite{kenmer} for a proof.
For any $m>0$ and $t\in I(U_c)$, put
\EQ{
 y_m(t):= \LR{|U_{c}(t)|^{2},\phi_{m}}.}
Then as $t\nearrow T_+(U_c)$, we have
\EQ{
 y_m=\LR{|S^{-\si_c}_{-1}U_c|^2,e^{-2\si_c}\phi_{me^{\si_c}}} \to 0, }
because $|S^{-\si_c}_{-1}U_c|^2$ is precompact in $L^3_x$ while $e^{-2\si_c}\phi_{me^{\si_c}}\to 0$ weakly in $L^{3/2}_x$.
Using \eqref{Eqn:IdentLocMass} and Hardy's inequality, we have
\EQ{
 |\dot y_m(t)| \le 2|\LR{\na U_{c},U_{c}\na\phi_m}| \lec  \|U_c\|_{\dot H^1}^2 \lec 1,}
uniformly in $m>0$.
Integrating it on $t<T_+(U_c)$ and sending $m\to\I$, we obtain $\|U_c(t)\|_{L^2}^2\lec |T_+(U_c)-t|$ and so $U_c(0)\in L^2_x$. Hence by the $L^2$ conservation, we get
$\| U_{c}(0) \|_{L^{2}} = \| U_{c}(t) \|_{L^{2}} \rightarrow 0$ as $t\nearrow T_+(U_c)$.
So $U_c =0$ and it contradicts $T_{+}(U_c) < \infty$. 

Therefore $T_+(U_c)=\I$. For all $t\in[0,\I)$, we have $\ti d_\cW(U_c(t))\ge \de_B$, and also $\|U_c(t)\|_{\dot H^1}\gec 1$ by the small data scattering.
Hence Proposition \ref{Prop:VarEst} implies that
\EQ{
 \ti\ka:=\inf_{t\ge 0}K(U_c(t))>0.}

Suppose that
\EQ{
 A:=\inf_{0\le t<\I} \si_c(t) >-\I.}
Then by precompactness of $\sK$ and Hardy's and Sobolev's inequalities, there exists $m$ such that
\EQ{ \label{Eqn:Vdecay}
 \int_{|x|> m} | \nabla U_c|^{2} + |U_c|^6 + |U_c/r|^2 dx  \ll \ti\ka}
for all $t \in [0, \infty)$, while $\sV_m(t)$ is bounded for $t\to\I$.
Applying \eqref{Eqn:OgawTsut} to $U_{c}$, integrating it on $[0,T]$ with $T\to\I$, we get a contradiction from $T\ti\ka \le [\sV_m]_{0}^{T}$.

Therefore $A=-\I$. Then by continuity of $U_c(t)$, we deduce that $\si_c(t_n)\to-\I$ along some sequence $t_n\to\I$ satisfying
\EQ{ \label{choice tn}
 \min_{0\le s\le t_n}\si_c(s)=\si_c(t_n).}
By the precompactness of $\sK$, we may assume that $S^{-\si_c(t_n)}_{-1}U_c(t_n)$ converges strongly in $\dot H^1\rad$.
Let $U_n$ and $U_\om$ be the solutions of \eqref{Eqn:SchrodCrit} with the initial data
\EQ{
 U_n(0)=S^{-\si_c(t_n)}_{-1}U_c(t_n), \pq U_\om(0)=\lim_{n\to\I}U_n(0).}
The local wellposedness theory implies that for any compact $J\subset I(U_\om)$, $U_n\to U_\om$ as $n\to\I$ in $C(J;\dot H^1_x)\cap S(J)$.
This convergence in $S$ and
\EQ{ \label{Un S bup}
 \|U_n\|_{S(-t_ne^{2\si_c(t_n)},0)}=\|U_c\|_{S(0,t_n)} \to \|U_c\|_{S(0,\I)}=\I}
imply that for each $t\in(-T_-(U_\om),0]$ and large $n$, we have $|t|<t_ne^{2\si_c(t_n)}$. Then putting $s_n:=t_n-|t|e^{-2\si_c(t_n)}\in(0,t_n]$, we have by the scale invariance,
\EQ{
 S_{-1}^{\si_c(t_n)-\si_c(s_n)}U_n(t)=S_{-1}^{-\si_c(s_n)}U_c(s_n)\in\sK\setminus\ti B_{\de_B}(\cW).}
Since $U_n(t)\to U_\om(t)$ in $\dot H^1$ and $\sK$ is precompact,
$\si_c(s_n)-\si_c(t_n)$ converges to some $\si_\om(t)\in[0,\I)$ up to a subsequence, where positivity comes from \eqref{choice tn}.
Then
$\{S_{-1}^{-\si_\om(t)}U_\om(t)\}_{t\in(-T_-(U_\om),0]}$
is in the closure of $\sK$, hence precompact, and also, $\ti d_\cW(U_\om(t))\ge\de_B$ for all $t\in(-T_-(U_\om),0]$.
Moreover $\|U_\om\|_{S(-T_-(U_\om),0)}=\I$, since otherwise the blow-up criterion yields $T_{-}(U_{\om}) = \infty$ and
the long-time perturbation for $t<0$ yields a uniform bound on $\|U_n\|_{S(-\I,0)}$ for large $n$, contradicting \eqref{Un S bup}.

Thus we have obtained another critical element $\bar U_\om(-t)$, that is the time inversion of $U_\om$, with the scale bound $\si_\om\ge 0$. Hence the above argument for $A>-\I$ applied to this new critical element yields a contradiction.
\end{proof}

\section{Four sets of dynamics} \label{Sec:ProofThm2}
In this section, we prove Theorem \ref{Thm:4 sets}.
Let  $ 0 < \beta \ll \epsilon_\star$ and $R > 0$ be such that $\|\phi_R^C W\|_{\dot H^1}^2 \le \be^4$.
We consider four solutions $u$ around $\cW$ with the following initial data at $t=0$ in the coordinate \eqref{coord around W} with $\vec\la:=(\la_1,\la_2)$,
\EQ{
 \pt \gamma(0) := - \phi_R^C W + \omega(\phi_R^C W , g_{-}) g_{+} -\omega(\phi_R^C W , g_{+}) g_{-},
 \pr \vec\la(0) = \beta (\pm 1, 0), \beta (0, \pm 1).}
Note that $\ga(0)$ is the symplectic projection of $-\phi_R^CW$ to the subspace that is perpendicular (with respect to $\omega$) to $\Span\{g_{-},g_{+}\}$.
This ensures $u(0)\in L^2_x$ so that we can apply the above blow-up result.

Let $I_E(u)\subset I(u)$ be the maximal interval where $u(t)\in B_{\de_E}(\cW)$ so that we can use the coordinate \eqref{coord around W}.
For brevity, put $\ti d(t):=\ti d_\cW(u(t))$ on $I_E(u)$.
Then by Proposition \ref{Prop:DistFuncEigen} and by the same argument as in \eqref{Eqn:EstDerEuEgamma}--\eqref{Eqn:EstPointWiseNrj}, we get
\EQ{
 \pt\| \gamma(t) \|^{2}_{\dot{H}^{1}} + O(\ti d(t)^4)
 \lec \be^4 + \int_{0}^{\ta(t)} (\ti d^2 \| \gamma \|_{\dot{H}^{1}_x} + \ti d^4) d\ta}
within $I_E(u)$. Hence on any interval $J\subset I_E(u)$ where $|\ta|\le \de_E^{-1/2}$, we have
\EQ{
 \| \gamma \|_{L_t^{\infty}(J;\dot{H}^{1})}^2
  \lec (1+\de_E^{-1})\|\vec\la\|_{L^\I_t}^4 \lec \|\vec\la\|_{L^\I_t(J)}^3.}
Then from \eqref{Eqn:EstDynLambda} and a continuity argument we see that
\EQ{ \label{hyperbolic}
\CAS{ \vec\la(0)= \beta ( \pm 1,  0)  \implies  \vec\la = \pm\beta  \left(\cosh(\mu \tau),\sinh (\mu \tau) \right)(1+O(\be^{1/2})) \\
 \vec\la(0)= \beta (0, \pm 1)  \implies \vec\la = \pm\beta\left(\sinh(\mu \tau),\cosh(\mu \tau) \right)(1+O(\be^{1/2})) } }
as long as
\EQ{ \label{hyper valid}
 |\ta|\le\de_E^{-1/2},
 \pq \be e^{\mu|\ta|} \sim \ti d(t)<\de_E.}
Using \eqref{Eqn:ExpEnergy} and \eqref{Eqn:ExpandK2nd}, we see that if $\vec\la(0)=\pm\be(1,0)$, then $E(u)-E(W)\sim-\be^2<0$ and $K(u(0))\sim \mp \be$.
Hence by \cite{kenmer},
\EQ{
 \CAS{\vec\la(0)=\be(1,0) \implies u(0)\in\cB_-\cap\cB_+,\\
 \vec\la(0)=-\be(1,0) \implies u(0)\in\cS_-\cap\cS_+.}}

If $\vec\la(0) := \pm\be(0,1)$, then
$0<E(u)-E(W)\sim\be^2\sim\ti d(0)^2\ll\ep_\star^2$, while near the boundary of the interval \eqref{hyper valid}, we have
$\ti d(t) \sim \min(\de_E,\be e^{\mu\de_E^{-1/2}}) \gg \be$.
Therefore Proposition \ref{Prop:DynEjection} applies to $u$ at some $t_+>0$ in the forward direction and at some $t_-<0$ in the backward direction, both within the interval \eqref{hyper valid}, where we have \eqref{hyperbolic}, and also
\EQ{
 \|\ga(t_\pm)\|_{\dot H^1} \ll |\la_1(t_\pm)| \sim |\la_2(t_\pm)|.}
Hence by \eqref{Eqn:ExpandK2nd}, we have $\sign K(u(t_\pm))=-\sign \la_1(t_\pm)=\Te(u(t_\pm))$ and
Proposition \ref{Prop:FarFromGd} yields
\EQ{
 \CAS{\vec\la(0)=\be(0,1) \implies \Te(u(t_\pm))=\mp 1,\pq \implies u(0)\in\cS_-\cap\cB_+,\\
 \vec\la(0)=-\be(0,1)\implies \Te(u(t_\pm))=\pm 1,\pq \implies u(0)\in\cS_+\cap\cB_-.}}

It is obvious that the above argument is stable for adding small perturbation in $\R^2$ to $\vec\la(0)$ and small perturbation in $H^1$ (in the orthogonal subspace) to $\ga(0)$.
Hence we obtain a small open set in $H^1$ around each of the four solutions.
\qedsymbol

\end{document}